\DeclareMathOperator{\Hom}{Hom}
\DeclareMathOperator{\End}{End}
\let\Im\relax
\DeclareMathOperator{\Im}{Im}
\DeclareMathOperator{\Lie}{Lie}
\newcommand{\heart}{\ensuremath\heartsuit}
\newcommand{\SO}{\mathrm{SO}}
\newcommand{\isoarrow}{%
   \ifbool{@display}{\overset{\sim}{\longrightarrow}}{\xrightarrow\sim}%
   }
\newcommand{\dia}{\diamond}
\newcommand{\dR}{\mathrm{dR}}
\newcommand{\Gr}{\mathrm{Gr}}
\DeclareFontFamily{U}{matha}{\hyphenchar\font45}
\DeclareFontShape{U}{matha}{m}{n}{
      <5> <6> <7> <8> <9> <10> gen * matha
      <10.95> matha10 <12> <14.4> <17.28> <20.74> <24.88> matha12
      }{}
\DeclareSymbolFont{matha}{U}{matha}{m}{n}
\DeclareFontFamily{U}{mathx}{\hyphenchar\font45}
\DeclareFontShape{U}{mathx}{m}{n}{
      <5> <6> <7> <8> <9> <10>
      <10.95> <12> <14.4> <17.28> <20.74> <24.88>
      mathx10
      }{}
\DeclareSymbolFont{mathx}{U}{mathx}{m}{n}
\DeclareMathSymbol{\obot}         {2}{matha}{"6B}
\newcommand{\bx}{{\bf x}}
\newcommand{\bG}{\mathbb{G}}
\newcommand{\bP}{\mathbb{P}}
\newcommand{\by}{\mathbf{y}}
\newcommand{\Fil}{\mathrm{Fil}}
\newcommand{\A}{\mathbb{A}}
\newcommand{\R}{\mathbb{R}}
\newcommand{\Q}{\mathbb{Q}}
\newcommand{\Z}{\mathbb{Z}}
\newcommand{\C}{\mathbb{C}}
\renewcommand{\H}{\mathbb{H}}
\newcommand{\F}{\mathbb{F}}
\newcommand{\bfA}{\mathbf{A}}
\newcommand{\bfG}{\mathbf{G}}
\newcommand{\cC}{\mathcal{C}}
\newcommand{\Oo}{\mathcal{O}}
\newcommand{\cZ}{\mathcal{Z}}
\newcommand{\cD}{\mathcal{D}}
\newcommand{\cS}{\mathcal{S}}
\newcommand{\cX}{\mathcal{X}}
\newcommand{\cL}{\mathcal{L}}
\newcommand{\cM}{\mathcal{M}}
\newcommand{\cF}{\mathcal{F}}
\newcommand{\cB}{\mathcal{B}}
\newcommand{\Exc}{\mathrm{Exc}}
\newcommand{\tot}{\mathrm{tot}}
\newcommand{\Spec}{\mathrm{Spec}\, }
\newcommand{\Sym}{\mathrm{Sym}}
\newcommand{\SL}{\operatorname{SL}}
\newcommand{\OO}{\mathcal O}
\newcommand{\m}{\mathrm{m}}
\newcommand{\cha}{\mathrm{Char}}
\newcommand{\ff}{\mathrm{if }}
\newcommand{\kay}{\boldsymbol{k}}
\newcommand{\CH}{\operatorname{CH}}
\newcommand{\kzxz}[4]{\left(\begin{smallmatrix} #1 & #2 \\ #3 & #4\end{smallmatrix}\right) }
\newcommand{\kabcd}{\kzxz{a}{b}{c}{d}}
\newtheorem{theorem}{Theorem}[section]
\newtheorem{corollary}[theorem]{Corollary}
\newtheorem{proposition}[theorem]{Proposition}
\newtheorem{lemma}[theorem]{Lemma}
\newtheorem{remark}[theorem]{Remark}
\theoremstyle{definition}
\newtheorem{definition}[theorem]{Definition}
\numberwithin{equation}{section}
\title{Pullback of arithmetic theta series and its modularity for unitary Shimura curves}
\author[Qiao He]{Qiao He}
\address{Department of Mathematics, Columbia University, MC 4406, 2990 Broadway
New York, NY 10027,  USA}
\email{qh2275@columbia.edu}
\author[Yousheng Shi]{Yousheng Shi}
\address{School of Mathematical Sciences, Zhejiang University, 866 Yuhangtang Rd,
Hangzhou, Zhejiang Province 310058, China}
\email{0023140@zju.edu.cn}
\author[Tonghai Yang]{Tonghai Yang}
\address{Department of Mathematics, University of Wisconsin Madison, Van Vleck Hall,
Madison, WI 53706, USA}
\email{thyang@math.wisc.edu}
\subjclass[2020]{11G18, 11F46, 14G40, 14G35}
\thanks{QH was partially supported by Columbia's Faculty Research Allocation Program, YS is partially supported by Zhejiang University's start-up Fund and Qizhen Fund, and   TY was partially supported by UW-Madison Kellet Mid-career award}
\begin{document}

\makeatletter
 \providecommand\@dotsep{5}
 \def\listtodoname{List of Todos}
 \def\listoftodos{\@starttoc{tdo}\listtodoname}
 \makeatother

\maketitle 

\begin{abstract}
		This paper is a complement of the modularity result of \cite{BHKRY1} for the special case $U(1,1)$ not considered there. The main idea is to embed a $U(1, 1)$ Shimura curve to many $U(n-1, 1)$ Shimura varieties for big $n$, and prove a precise pullback formula of the generating series of arithmetic divisors.  Afterwards, we use the modularity result of \cite{BHKRY1} together with the existence of non-vanishing  classical theta series at any given point in the upper half plane to prove the modulartiy result on $U(1, 1)$ Shimura curves. 
\end{abstract}

\tableofcontents

\section{Introduction}  \label{sect:Intro}

Modularity is a beautiful way to organize a sequence of objects (say numbers) with a lot of symmetry. A classical example is the theta function 
\begin{equation}
     \theta_m(\tau) =\sum_{n=0}^\infty r_m(n) q^n, \quad q=e^{ 2 \pi i \tau},
\end{equation}
where $r_m(n)$ is the number of ways to express $n$ as sum of $m$ integers. The modularity of $\theta_m$ implies in particular that if we know a few $r_m(n)$ for ``small" $n$, then we know all $r_m(n)$.  Modularity has played an important role in number theory and other fields, see for example \cite{HZ}, \cite{KM}, \cite{KuDuke}, \cite{zhang2021AFL}, \cite{BR}, and \cite{BHKRY1}.

In \cite{BHKRY1}, the authors proved the modularity of  a generating series
of special divisors on a compactified integral model of a Shimura variety
associated to a unitary group (Kr\"amer model for a unimodular lattice) of signature $(n-1, 1)$ for $n>2$, and left the special case $n=2$ unresolved for technical reasons. This paper finishes the job for $n=2$ by the so-called embedding trick.  We now describe the main results in a little more detail. For undefined terms, see Section \ref{sect:USV} for definition. 

Let $\kay=\Q(\sqrt{-D})$ be an imaginary quadratic field with ring of integers $\Oo_{\kay}$ and odd discriminant $-D$.
Let $\mathfrak a_0$ and $\mathfrak a$ be two unimodular Hermitian $\Oo_{\kay}$-lattices of signature $(1, 0)$ and $(n-1, 1)$ respectively.  Let $\mathcal S^*$ be the associated integral model of the Shimura variety with toroidal compactification (see Section \ref{sect:USV}).  For each $m > 0$,  Kudla and Rapoport (\cite{KR2}) constructed divisors $\mathcal Z(m)$ in the open Shimura variety $\mathcal S \subset \mathcal S^*$ over $\Oo_{\kay}$.  There are two Green functions for $Z(m) = \mathcal Z(m)(\C)$, $\Gr_B(m)$ constructed by J. Bruinier in his thesis (\cite{BrThesis}, \cite{BF04}), and $\Gr_K(m, v)$ constructed by S. Kudla with an extra parameter $v >0$ (\cite{Kudla97}). By studying their behavior at the boundary, one obtains two arithmetic divisors  (\cite{BHY}, \cite{Ho1}, \cite{ES},  see Sections \ref{sec:pullback I} and \ref{sect:PullBack II})
$$
\widehat{\mathcal Z}_B^\tot(m) = (\mathcal Z_B^{\tot}(m), \Gr_B(m)),  \hbox{ and } 
\widehat{\mathcal Z}_K^\tot(m,v) = (\mathcal Z_K^{\tot}(m,v), \Gr_K(m, v)) \in \widehat{\CH}_\C^1(\mathcal S^*),
$$
with $\mathcal Z_i^{\tot}(m) = \mathcal Z^*(m) + \hbox{ boundary components}$ for $i=B, K$.  Here   $\mathcal Z^*(m)$ is the Zariski closure of $\mathcal Z(m)$ in $\mathcal S^*$, and $\widehat{\CH}_\C^1(\mathcal S^*)$ is the arithmetic Chow group with $\C$-coefficients and  log-log singularities near the boundary (see \cite{BKK} and \cite{Ho1}). Actually, $\Gr_K(m, v)$ is defined for all integers $m$   and induces  an arithmetic divisor  (for all integers $m$)
$$\widehat{\mathcal Z}_K^\tot(m,v) = (\mathcal Z_K^{\tot}(m), \Gr_K(m, v)) \in \widehat{\CH}_\C^1(\mathcal S^*).$$
One also defines arithmetic divisors  $\widehat{\mathcal Z}_B^{\tot}(0)$ and $\widehat{\mathcal Z}^{\tot}_K(0, v)$  in $\widehat{\CH}_\C^1(\mathcal S^*)$. As in \cite{BHKRY1},  We define two formal $q$-expansions with values in $\widehat{\CH}_\C^1(\mathcal S^*)$:

\begin{align}
\widehat{\Theta}_B(\tau) &= \sum_{m \ge 0}   \widehat{\mathcal Z}_B^{\tot}(m) q^m ,  \hbox{ and }
\\
\widehat{\Theta}_K(\tau) &= \sum_{m \in \Z}   \widehat{\mathcal Z}_K^{\tot}(m, v) q^m
.
\end{align}
They are usually called  Bruinier and Kudla arithmetic  theta functions  respectively. 
Ehlen and Sankran proved in  \cite[Theorem 1.4]{ES} that $\widehat{\Theta}_B(\tau) -\widehat{\Theta}_K(\tau)$ is a modular form for $\Gamma_0(D)$ of weight  $n$, character $\chi_{-D}^n$ (where $\chi_{-D}(\cdot)$ is the Kronecker symbol $\left(\frac{-D}{\cdot}\right)$) and with values in $\widehat{\CH}_\C^1(\mathcal S^*)$ (see Section \ref{sect:Modularity} for detail and some subtlety). In \cite{BHKRY1}, the authors proved the modularity of $\widehat{\Theta}_B(\tau)$ for $n \ge 3$, and thus the modularity of $\widehat{\Theta}_K(\tau)$ for $n \ge 3$.  In this paper, we prove the modularity of $\widehat{\Theta}_K(\tau)$ for $n=2$, which in turn  implies the modularity of $\widehat{\Theta}_B(\tau)$. We record it as 

\begin{theorem} \label{theo:Modularity}  When $n =2$,  $\widehat{\Theta}_B(\tau)$ and $\widehat{\Theta}_K(\tau)$ are modular forms for $\Gamma_0(D)$ of weight $2$, trivial character, and with values in 
$\widehat{\CH}_\C^1(\mathcal S^*)$.
\end{theorem}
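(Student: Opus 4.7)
\medskip

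\noindent\textbf{Proof plan.} The strategy indicated by the abstract is to reduce the $n=2$ case to the already-established $n \geq 3$ case by an embedding trick, and then divide out by a classical theta factor. For every sufficiently large $n \geq 3$ I would first choose a positive definite Hermitian $\Oo_{\kay}$-lattice $L$ of rank $n-2$ such that $\mathfrak a \oplus L$ is unimodular Hermitian of signature $(n-1,1)$; such $L$ exist in abundance by the classification of unimodular Hermitian $\Oo_{\kay}$-lattices. Feeding the datum $(\mathfrak a_0, \mathfrak a \oplus L)$ into the construction of Section \ref{sect:USV} produces a ``large'' compactified integral model $\mathcal S^*_n$ of signature $(n-1,1)$, and the orthogonal decomposition induces a finite unramified map
$$
\iota \colon \mathcal S^*_2 \longrightarrow \mathcal S^*_n
$$
over $\Oo_{\kay}$, compatible with the toroidal compactifications on both sides.

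Next I would establish the pullback formula for the Kudla arithmetic divisors:
\begin{equation}\label{eq:pullback-plan}
\iota^* \widehat{\mathcal Z}^{\tot}_{K,n}(m,v) \; = \; \sum_{\substack{m_1, m_2 \in \Z \\ m_1 + m_2 = m,\; m_2 \geq 0}} r_L(m_2)\, \widehat{\mathcal Z}^{\tot}_{K,2}(m_1, v),
\end{equation}
where $r_L(m_2)$ is the representation number of $m_2$ by $L$. Summing this over $m \in \Z$ packages it into the generating series identity
$$
\iota^* \widehat{\Theta}_{K,n}(\tau) \; = \; \theta_L(\tau) \cdot \widehat{\Theta}_{K,2}(\tau),
$$
with $\theta_L(\tau) = \sum_{m_2 \geq 0} r_L(m_2) q^{m_2}$ the classical theta function of $L$, a holomorphic modular form for $\Gamma_0(D)$ of weight $n-2$ and character $\chi_{-D}^{n-2}$. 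Geometrically \eqref{eq:pullback-plan} reflects the decomposition of any special endomorphism $x$ parametrized by $\mathcal S^*_n$, at a point of $\iota(\mathcal S^*_2)$, into its $\mathfrak a$-component (giving an $\mathcal S^*_2$-special endomorphism of norm $m_1$) and its $L$-component (giving a lattice vector in $L$ of norm $m_2$). On the analytic side one has to verify the corresponding splitting of the Kudla Green function $\Gr_K(m,v)$, together with that of the boundary correction $\mathcal Z^{\tot}_K(m) - \mathcal Z^*(m)$ and of the log-log singularities implicit in $\widehat{\CH}^1_\C(\mathcal S^*_n)$.

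Granting \eqref{eq:pullback-plan}, the theorem follows quickly. By \cite{BHKRY1}, $\widehat{\Theta}_{K,n}(\tau)$ is modular of weight $n$ and character $\chi_{-D}^n$ for $\Gamma_0(D)$ whenever $n \geq 3$, hence so is its pullback $\theta_L(\tau)\, \widehat{\Theta}_{K,2}(\tau)$. By varying $L$ (this is the role of the ``non-vanishing of classical theta series at any given point'' alluded to in the abstract), one can arrange that for any prescribed $\tau_0$ in the upper half plane some admissible $\theta_L$ does not vanish at $\tau_0$; locally near $\tau_0$ the quotient $\widehat{\Theta}_{K,2}(\tau) = (\iota^* \widehat{\Theta}_{K,n})(\tau) / \theta_L(\tau)$ is therefore a well-defined coefficient-wise modular form, and using finitely many such $L$ covering all zeros of any one of them shows that $\widehat{\Theta}_{K,2}(\tau)$ is a genuine holomorphic modular form of weight $2$ and trivial character (since $\chi_{-D}^n/\chi_{-D}^{n-2} = \chi_{-D}^2 = 1$) for $\Gamma_0(D)$. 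The modularity of $\widehat{\Theta}_B(\tau)$ then follows by combining this with the Ehlen--Sankran modularity of $\widehat{\Theta}_B - \widehat{\Theta}_K$.

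The main obstacle will be the pullback formula \eqref{eq:pullback-plan}. On the generic fiber the identity is essentially classical, but matching it with the vertical components of the Kudla--Rapoport cycles on the integral model, as well as with the log-log Green function contributions at the toroidal boundary, requires considerable bookkeeping. One must check that $\iota^*$ is well defined on $\widehat{\CH}^1_\C$ (for instance by establishing proper intersection of $\iota(\mathcal S^*_2)$ with the special divisors and the boundary of $\mathcal S^*_n$, possibly after a moving argument), and then verify that the orthogonal splitting $\mathfrak a \oplus L$ is respected both by the integral special cycles and by their boundary extensions $\mathcal Z^{\tot}_K(m) - \mathcal Z^*(m)$. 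This is the technical heart of the argument and should occupy Sections \ref{sec:pullback I} and \ref{sect:PullBack II}.
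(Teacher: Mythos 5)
Your plan is, in substance, the paper's own argument: the embedding trick via $\varphi_\Lambda\colon \mathcal S^*\to\mathcal S^{\dia,*}$ for a positive definite unimodular $\Lambda$, the pullback identity $\varphi_\Lambda^*(\widehat{\Theta}_K^\dia)=\theta_\Lambda\cdot\widehat{\Theta}_K$ (Theorem \ref{theo:Decomposition}, occupying Sections \ref{sec:pullback I} and \ref{sect:PullBack II} exactly as you predict), and division by the theta factor after arranging non-vanishing at each point of $\H$. The only cosmetic difference is how the non-vanishing is arranged: you vary the lattice individually (as in the introduction, following Li's lemma), whereas the body of the paper averages the pullback identity over the genus $[[L_0^\ell]]$ with weights $1/|\mathrm{Aut}(\Lambda)|$, invokes the Siegel--Weil formula to replace the averaged theta series by the Eisenstein series $E_{L_0^\ell}(\tau,s_\ell)$, and quotes Freitag's lemma for its non-vanishing at any prescribed $\tau_0$. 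Either variant works.

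The one genuine gap is your last sentence. For $n=2$ the Ehlen--Sankaran theorem does not directly give modularity of $\widehat{\Theta}_B-\widehat{\Theta}_K$ in Bruinier's normalization used in \cite{BHKRY1}: their result concerns a differently normalized series $\widehat{\Theta}_{ES}$, whose Green functions $G_{ES}(m,\mu)$ are regularized theta lifts of the vector-valued forms $F_{m,\mu}$ of Lemma \ref{lem:F} and whose boundary multiplicities are $\eta_\Phi(m)=-2[\sigma_1(m)+c_m^+(0)]$, neither of which matches $\widehat{\Theta}_B$. The paper therefore first deduces modularity of $\widehat{\Theta}_{ES}$ from that of $\widehat{\Theta}_K$ (Theorem \ref{theo:B}), extracts from the generic fibre that $\sum_{m\ge 0} c_m^+(0)q^m$ is a weight-two form for $\Gamma_0(D)$ (Corollary \ref{cor:cm+}), and then shows that $\widehat{\Theta}_B-\widehat{\Theta}_{ES}$ equals the boundary-supported series \eqref{eq:Difference}, whose Green functions arise from Borcherds lifts $\Psi_m$ of $\tilde f_m-F_{m,0}$, and is modular. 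Without some such comparison of the two normalizations, the $\widehat{\Theta}_B$ half of the theorem does not follow from the $\widehat{\Theta}_K$ half as quickly as you assert.
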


\begin{remark}
    As $\widehat{\CH}_\C^1(\mathcal S^*)$ is infinitely dimensional, the modularity needs some explanation.  Since $\widehat{\mathcal Z}_B^{\tot}(m)$ is independent of $\tau$,  $\widehat{\Theta}_B(\tau)$ being modular means one of the two  equivalent conditions:

(1)  For every linear functional  $f$  on $\widehat{\CH}_\C^1(\mathcal S^*)$, $f(\widehat{\Theta}_B(\tau)) =  \sum f(\widehat{\mathcal Z}_B^{\tot}(m)) q^m   $ is  modular.

(2)   There are finitely many  arithmetic divisors $\widehat{Z}_i$ such that 
$$
\widehat{\Theta}_B(\tau)= \sum_i f_i(\tau) \widehat{\mathcal Z}_i 
$$
and $f_i(\tau)$ are usual scalar holomorphic  modular forms.

However, since $\widehat{\mathcal Z}_K^{\tot}(m, v)$  depends  on $v =\hbox{Im}(\tau)$,  the  above definition  does not work and the modularity is more subtle. We refer to \cite[Definition 4.1]{ES} for detail. Roughly speaking, it means that  we can write 
$$
\widehat{\Theta}_K(\tau)= \sum_i f_i(\tau) \widehat{\mathcal Z}_i + (0, f(\tau, z) )
$$
with $f_i$ and $\widehat{\mathcal Z}_i$  as above, and that $f(\tau, z)$ is a smooth scalar modular form for every $z\in S$ plus some extra technical conditions. 
\end{remark}

We remark that the modularity of $\widehat{\Theta}_B$ could be used to prove a complement of the original Gross-Zagier formula (the case when every $p|N$ is ramified in $\kay$, in comparison to that every $p|N$ is split in $\kay$).  To prove the modularity of $\widehat{\Theta}_K(\tau)$ for $n=2$, we use the following embedding trick (see, for example, \cite{li2021algebraicity}, \cite{HP}).  Let $\Lambda$ be a positive definite unimodular $\Oo_{\kay}$-lattice of rank $m$. Replacing $\mathfrak a$ by $\mathfrak a \obot \Lambda$, we obtain a Shimura variety 
$\mathcal S^{\dia, *}$   together with a canonical morphism:
\begin{equation}
\varphi_\Lambda: \mathcal S^{*} \rightarrow \mathcal S^{\dia, *}, (A_0, A, ...) \mapsto (A_0, A \times (A_0 \otimes_{\Oo_{\kay}} \Lambda), ...).
\end{equation}
It induces a homomorphism $\varphi_\Lambda^*:  \widehat{\CH}^1(\mathcal S^{\dia, *}) \rightarrow \widehat{\CH}^1(\mathcal S^{*})$. The following is  the main technical theorem of this paper, which should be of independent interest. 

\begin{theorem}\label{theo:Decomposition}
Let notations be as above. Then 
$$
\varphi_\Lambda^* (\widehat{\Theta}_K^\dia(\tau) )
 =\theta_\Lambda (\tau) \widehat{\Theta}_K(\tau).
$$
Here $\widehat{\Theta}_K^\dia(\tau) $ is the arithmetic theta function (with Kudla Green functions) associated to $\mathcal S^{\dia, *}$, and 
\begin{equation}
    \theta_\Lambda(\tau) = \sum_{x \in \Lambda} q^{(x,x)} =\sum_{n \ge 0}r_\Lambda(n) q^n
\end{equation}
is a classical holomorphic modular form for $\Gamma_0(D)$ of  weight $m$ and character $\chi_{-D}^m$ .
\end{theorem}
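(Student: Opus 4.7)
The plan is to match $q^M$-coefficients on both sides and reduce Theorem \ref{theo:Decomposition} to the single termwise identity
\begin{equation*}
\varphi_\Lambda^*\bigl(\widehat{\mathcal Z}_K^{\dia,\tot}(M,v)\bigr) \;=\; \sum_{n\ge 0} r_\Lambda(n)\,\widehat{\mathcal Z}_K^\tot(M-n,v)
\end{equation*}
in $\widehat{\CH}^1_\C(\mathcal S^*)$, for each $M \in \Z$. I will establish this separately on the divisor part and on the Green-function part, then reconcile the boundary corrections and log-log asymptotics that enter the ``tot'' superscripts.

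For the divisor part, the argument is via the moduli interpretation of Kudla--Rapoport cycles. A point of $\mathcal Z^\dia(M)$ lying on $\varphi_\Lambda(\mathcal S)$ corresponds to an $\Oo_\kay$-linear homomorphism $x\colon A_0 \to A \times (A_0 \otimes_{\Oo_\kay}\Lambda)$ with $(x,x) = M$. Writing $x = (x_1, x_2)$, one has $x_1 \in \Hom(A_0, A)$ and $x_2 \in \Hom(A_0, A_0 \otimes_{\Oo_\kay}\Lambda) \cong \Lambda$, and additivity of the Hermitian pairing yields $(x_1,x_1) + (x_2,x_2) = M$. Grouping by $n = (x_2, x_2)$ produces
\begin{equation*}
\varphi_\Lambda^*\mathcal Z^\dia(M) \;=\; \sum_{n=0}^{M} r_\Lambda(n)\,\mathcal Z(M-n)
\end{equation*}
on the open Shimura variety $\mathcal S$, and the identity propagates to Zariski closures inside $\mathcal S^*$.

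For the Green-function part, I use Kudla's representation of $\Gr_K(m,v)$ as a theta integral built from a Gaussian Schwartz form on $\mathfrak a_\R$. Under the orthogonal splitting $V^\dia_\R = \mathfrak a_\R \obot \Lambda_\R$, this Schwartz form factors as a tensor product, and summing the $\Lambda$-factor over $x_2 \in \Lambda$ with $(x_2,x_2) = n$ contributes $r_\Lambda(n)$. This yields
\begin{equation*}
\varphi_\Lambda^*\Gr_K^\dia(M,v) \;=\; \sum_{n\ge 0} r_\Lambda(n)\,\Gr_K(M-n, v)
\end{equation*}
on $\mathcal S(\C)$, and is essentially the additivity of the Kudla construction under orthogonal decompositions of Hermitian spaces.

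The main obstacle, which I expect to occupy the bulk of Sections \ref{sec:pullback I}--\ref{sect:PullBack II}, is to reconcile everything at the boundary. The ``tot'' divisors carry explicit boundary corrections, and $\Gr_K$ has prescribed log-log singularities at the cusps; both must be shown to pull back term-by-term under $\varphi_\Lambda^*$. The plan is to work cusp-by-cusp, comparing the local charts of the toroidal compactifications of $\mathcal S^*$ and $\mathcal S^{\dia,*}$ through the embedding $\varphi_\Lambda$, and invoking the explicit formulas for the constant terms of $\Gr_K$ at each cusp recorded in \cite{Ho1} and \cite{ES} to verify that the boundary contributions on the two sides match.
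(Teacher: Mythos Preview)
Your proposal misses the central difficulty. When you write $x = (x_1, x_2)$ and group by $n = (x_2,x_2)$, the term $n = M$ corresponds to $x_1 = 0$: here the corresponding component of $\cZ^\dia(M)$ contains the entire image of $\varphi_\Lambda$, so the intersection is improper and the scheme-theoretic pullback is all of $\cS$, not a divisor. Your formula $\varphi_\Lambda^*\cZ^\dia(M) = \sum_{n=0}^M r_\Lambda(n)\,\cZ(M-n)$ is therefore ill-posed at $n=M$: the symbol $\cZ(0)$ has no moduli-theoretic meaning. What must be shown is that the line bundle $\Oo(\cZ^\dia)|_{\cS^*}$ for each such improper component is canonically $\Omega^{-1}$ (Lemma~\ref{lem:restriction degenerate bundle}), and this is precisely why the paper devotes Sections~\ref{sec:two line bundles}--\ref{sec:deformation of special divisors} to identifying $\Omega$ and to extending the deformation-theoretic description of special divisors (Proposition~\ref{prop:deformation of Z x interior}) across the toroidal boundary (Theorem~\ref{thm:deformation cS star}). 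Without this, the divisor identity cannot even be formulated correctly, let alone ``propagated to Zariski closures.''

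The same issue recurs in your Green-function argument. For $x = (0, x_2)$ with $x_2 \in \Lambda$ nonzero, one has $R(x,z) = 0$ for every $z \in \cD$, so $\Gr^\dia(x,v)(z) = \beta_1(0) = +\infty$: the ``factorization of the Schwartz form'' you invoke breaks down completely. The paper's proof of Theorem~\ref{thm:pullback final} handles this by showing that the metrized line bundle $\cD^\dia(x) \otimes \Omega$, restricted to $\cD$, is trivial with constant norm $1$; the Taylor expansion \eqref{eq:Taylor expansion} of $\beta_1$ is used to cancel the logarithmic singularity against $-\log\|s_{x,z}\|^2$ coming from the metric \eqref{eq: metric of Omega} on $\Omega$. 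This is the archimedean counterpart of the improper-intersection phenomenon, and it is what forces the constant term $\widehat\cZ^\tot_K(0,v) = \widehat\Omega^{-1} + \cdots$ to appear on the right-hand side. By contrast, the boundary matching you flag as the ``main obstacle'' is comparatively routine (Proposition~\ref{prop:pullback boundary} and the short computation in Theorem~\ref{thm:pullback total}) once the deformation theory of Section~\ref{sec:deformation of special divisors} is in place.
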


By an analogue of a result of Yingkun Li \cite[Lemma 3.2]{li2021algebraicity},  for every $\tau$, there is some $\Lambda$ with $\theta_\Lambda(\tau) \ne 0$. So $\widehat{\Theta}_K(\tau)$ is everywhere defined and the modularity of  $\widehat{\Theta}_K(\tau)$ follows from the modularity result of \cite{BHKRY1}. Now Theorem \ref{theo:Modularity}  for $\widehat{\Theta}_B(\tau)$ follows from  the modularity for $\widehat{\Theta}_K(\tau)$ and the main result of \cite{ES}.  Note that the analogue of Theorem \ref{theo:Decomposition} does not hold for $\widehat{\Theta}_B$.

The main work to prove Theorem \ref{theo:Decomposition} is to understand the pullback of special divisors on $\cS^{\dia,*}$ that intersect with $\cS^*$ improperly and various subtleties about the boundary components. On the generic fiber, this pullback is controlled by the line bundle  of modular form of weight $1$  (descended from  the tautological line bundle), which can be regarded as an analogue of the adjunction formula. The line bundle of modular form of weight $1$  has two well-defined integral models: $\omega$ considered in \cite{BHKRY1} and $\Omega$ considered in \cite{Ho2}. Although $\omega$ is used in the definition of $\widehat{\mathcal Z}_K^{\tot}(0, v) $ and $\widehat{\mathcal Z}_B^{\tot}(0)$ in \cite{BHKRY1}, it turns out that $\Omega$ is the one that controls the pullback of special divisors that intersect with $\cS^*$ improperly. So it is important to figure out their precise relation. We show in Section \ref{sec:two line bundles} (Theorem \ref{thm:comparision of two line bundles}):
\begin{equation} \label{eq:Relation}
    \Omega= \omega \otimes (\Oo_{\Exc})^{-1}.
\end{equation}
We believe that this relation is of independent interest too. It is discovered by \cite{Ho2} that the line bundle $\Omega$ controls the deformation theory of special divisors in the open Shimura variety $\cS$. Another main technical point of this paper is to extend this observation to the toroidal compactification $\cS^*$, see Proposition \ref{prop:deformation cS star}. In order to do this, we extend the definition of $\cZ(m)$ as a stack by \cite{KR2} to the boundary, see Definition \ref{def: cZ*}.

This paper is organized as follows. In Section \ref{sect:USV}, we review basics on integral model, toroidal compactification and special divisors which are needed in this paper. In Section \ref{sec:two line bundles}, we prove the precise relation (\ref{eq:Relation}) between the  two well-defined integral models of the line bundle of modular forms of weight $1$ (Theorem  \ref{thm:comparision of two line bundles}). In Section \ref{sec:deformation of special divisors}, we extend the controlling deformation property  of $\Omega$ of Ben Howard to boundaries (Proposition \ref{prop:deformation cS star}).  Section \ref{sec:pullback I} is dedicated to proving Theorem \ref{thm:pullback total}: Theorem \ref{theo:Decomposition} without Green functions.   In Section \ref{sect:PullBack II}, we deal with  pullbacks of Green functions and metrics on line bundles, and finish the proof of Theorem \ref{theo:Decomposition}.   In the first part of Section \ref{sect:Modularity}, we use Li's embedding trick (\cite{li2021algebraicity})  to prove the modularity of $\widehat{\Theta}_K(\tau)$. In the second part of Section \ref{sect:Modularity}, we describe two slightly different Green functions for special divisors, one defined by Bruinier in \cite{BHKRY1} and the other one defined by Ehlen and Sankaran in \cite{ES}. We show that $\widehat{\Theta}_B(\tau) $ and $\widehat{\Theta}_{ES}(\tau) $ are essentially the same (\ref{eq:Difference}), which will be used in the modularity of difference $\widehat{\Theta}_K(\tau) - \widehat{\Theta}_{B}(\tau)$.   We remark that Bruinier's Green functions are a little easier to define than those of Ehlen and Sankaran.

{\bf Acknowledgment}: We thank Jan Bruinier, Ben Howard, Yingkun Li,  Keerthi Madapusi Pera, and  Sid Sankaran their helpful discussions during the preparation of this paper. We thank the anonymous referee for carefully reading the paper and for their valuable  suggestion and comments, which makes the paper better written.

\subsection{Notations}
Let $\kay=\Q(\sqrt{-D})$ be an imaginary quadratic field with ring of integers $\Oo_{\kay}$ and odd fundamental discriminant $-D$. Let $\chi_{-D}(\cdot)$ be the Kronecker symbol $\left(\frac{-D}{\cdot}\right)$.
Fix a $\pi\in \Oo_K$ such that $\Oo_K=\Z+\pi \Z$. We also set $\delta=\sqrt{-D}$. For any $\Oo_{\kay}$-scheme $S$, define
\begin{align}\label{eq:epsilon}
    \epsilon_S=& \pi \otimes 1- 1\otimes \varphi(\bar\pi) \in \Oo_{\kay}\otimes_\Z \Oo_S,\\
    \bar\epsilon_S=& \bar\pi \otimes 1- 1\otimes \varphi(\bar\pi) \in \Oo_{\kay}\otimes_\Z \Oo_S,
\end{align}
where $\varphi:\Oo_{\kay}\rightarrow\Oo_S$ is the structure map. We denote the Galois conjugate of $\varphi$ by $\bar\varphi$.
The ideal sheaf generated by these sections are independent of the choice of $\pi$. The $\Oo_{\kay}$-scheme $S$ will be usually clear from the context and we often abbreviate $\epsilon_S$ and $\bar\epsilon_S$ to $\epsilon$ and $\bar\epsilon$ respectively.

\section{Unitary Shimura varieties and special divisors}  \label{sect:USV}
\subsection{Unitary Shimura varieties}\label{sec: unitary shimura variety} 
In this section, we review the theory of unitary Shimura variety following \cite{BHKRY1}.  Let $W_0$ and $W$ be $\kay$-vector spaces endowed with hermitian forms $H_0$ and $H$ of signatures $(1,0)$ and $(n-1,1)$, respectively. We always assume that $n \geqslant 2$. Abbreviate
$$
W(\mathbb{R})=W \otimes_{\mathbb{Q}} \mathbb{R}, \quad W(\mathbb{C})=W \otimes_{\mathbb{Q}} \mathbb{C}, \quad W\left(\mathbb{A}_f\right)=W \otimes_{\mathbb{Q}} \mathbb{A}_f,
$$
and similarly for $W_0$. In particular, $W_0(\mathbb{R})$ and $W(\mathbb{R})$ are hermitian spaces over $\mathbb{C}=\boldsymbol{k} \otimes_{\mathbb{Q}} \mathbb{R}$.

We assume the existence of $\Oo_{\kay}$-lattices $\mathfrak{a}_0 \subset W_0$ and $\mathfrak{a} \subset W$, self-dual with respect to the hermitian forms $H_0$ and $H$. As  $\delta=\sqrt{-D} \in \boldsymbol{k}$ generates the  different of $\boldsymbol{k} / \mathbb{Q}$, this is equivalent to self-duality with respect to the symplectic forms
\begin{equation}\label{eq: symp form}
 \quad \psi_0\left(w, w^{\prime}\right)=\operatorname{Tr}_{\kay / \mathbb{Q}} H_0\left(\delta^{-1} w, w^{\prime}\right), \quad \psi\left(w, w^{\prime}\right)=\operatorname{Tr}_{\boldsymbol{k} / \mathbb{Q}} H\left(\delta^{-1} w, w^{\prime}\right).
 \end{equation}
Let $G \subset \mathrm{GU}\left(W_0\right) \times \mathrm{GU}(W)$ be the subgroup of pairs for which the similitude factors are equal. We denote by $\nu: G \rightarrow \mathbb{G}_m$ the common similitude character, and note that $\nu(G(\mathbb{R})) \subset \mathbb{R}^{>0}$.
Let $\mathcal{D}\left(W_0\right)=\left\{y_0\right\}$ be a one-point set, and
\begin{equation}
  \mathcal{D}(W)=\{\text{negative definite }  \C\text{-lines }  y \subset W(\mathbb{R})\},  
\end{equation}
so that $G(\mathbb{R})$ acts on the connected hermitian domain
$$
\mathcal{D}=\mathcal{D}\left(W_0\right) \times \mathcal{D}(W) .
$$

The symplectic forms \eqref{eq: symp form} determine a $\boldsymbol{k}$-conjugate-linear isomorphism
\begin{equation}
\operatorname{Hom}_k\left(W_0, W\right) \stackrel{\sim}{\longrightarrow} \operatorname{Hom}_k\left(W, W_0\right),\   x \mapsto x^{\vee},
\end{equation}
characterized by $\psi\left(x w_0, w\right)=\psi_0\left(w_0, x^{\vee} w\right)$. The $\boldsymbol{k}$-vector space
$$
V=\operatorname{Hom}_{\boldsymbol{k}}\left(W_0, W\right)
$$
carries a hermitian form of signature $(n-1,1)$ defined by
\begin{equation}\label{eq: herm form rhs}
    ( x_1, x_2)=x_2^{\vee} \circ x_1 \in \operatorname{End}_{\boldsymbol{k}}\left(W_0\right) \cong \boldsymbol{k} .
\end{equation}
Let 
\begin{equation}\label{eq:L}
   L\coloneqq \Hom_{\Oo_k}(\mathfrak{a}_0,\mathfrak{a}),
\end{equation}
which is a unimodular hermitian $\Oo_{\kay}$-lattice of signature $(n-1,1)$.
The group $G$ acts on $V$ in a natural way, defining an exact sequence
\begin{equation}
    1\rightarrow \mathrm{Res}_{\kay/\Q}\mathbb{G}_m\rightarrow G\rightarrow \mathrm{U}(V)\rightarrow 1.
\end{equation}

The lattices $\mathfrak{a}_0$ and $\mathfrak{a}$ determine a compact open subgroup
\begin{align}\label{eq: K}
K=\left\{g \in G\left(\mathbb{A}_f\right): g \widehat{\mathfrak{a}}_0=\widehat{\mathfrak{a}}_0 \text{ and } g \widehat{\mathfrak{a}}=\widehat{\mathfrak{a}}\right\} \subset G\left(\mathbb{A}_f\right),
\end{align}
and the orbifold quotient
$$
\operatorname{Sh}(G, \mathcal{D})(\mathbb{C})=G(\mathbb{Q}) \backslash \mathcal{D} \times G\left(\mathbb{A}_f\right) / K
$$
is the set of complex points of a smooth $\boldsymbol{k}$-stack of dimension $n-1$, denoted by $\operatorname{Sh}(G, \mathcal{D})$.

\subsection{Integral model}\label{subsec:integral model}
We recall the integral model defined by \cite{BHKRY1} originating in the work of Kr\"amer \cite{Kr}. For $(r,s)=(n-1,1)$ or $(n,0)$, let
$$
\mathcal{M}_{(r,s)} \rightarrow \operatorname{Spec}(\Oo_{\kay} ) 
$$
 be the stack so that for an $\Oo_{\kay}$-scheme $S$,  $\mathcal{M}_{(r,s)}(S)$ is the
groupoid of quadruples $\left(A, \iota, \lambda, \mathcal{F}_A\right)$  where
\begin{enumerate}
    \item  $A \rightarrow S$ is an abelian scheme of relative dimension $n$,
    \item $\iota: \Oo_{\kay} \rightarrow \operatorname{End}(A)$ is an action of $\Oo_{\kay}$,
    \item $\lambda: A \rightarrow A^{\vee}$ is a principal polarization whose induced Rosati involution $\dagger$ on $\End^0(A)$ satisfies $\iota(\alpha)^{\dagger}=\iota(\bar{\alpha})$ for all $\alpha \in \Oo_{\kay}$,
    \item\label{item:Kramer} $\mathcal{F}_A \subset \operatorname{Lie}(A)$ is an $\Oo_{\kay}$-stable $\Oo_S$-module local direct summand of rank $r$ satisfying Krämer's condition: $\Oo_{\kay}$ acts on $\mathcal{F}_A$ via the structure map $\Oo_{\kay} \rightarrow  \Oo_S$, and acts on quotient $\operatorname{Lie}(A) / \mathcal{F}_A$ via the complex conjugate of the structure map.
\end{enumerate}
In particular, when $(r,s)=(n,0)$, the $\cF_A$ in the condition $(4)$ above is simply $\Lie(A)$. In this case, $\cM(n,0)\rightarrow \Spec \Oo_{\kay}$ is proper and smooth of relative dimension $0$ by \cite[Proposition 2.1.2]{Ho1}. For $\mathcal{M}_{(n-1,1)}$, we have the following theorem.
\begin{theorem}\cite[Theorem 2.3.3]{BHKRY1} 
The $\mathcal{O}_{\boldsymbol{k}}$-stack $\mathcal{M}_{(n-1,1)}$ is regular and flat with reduced fibers. 
\end{theorem}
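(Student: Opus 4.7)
The plan is to reduce the statement to an explicit local model computation. By standard representability for abelian schemes with endomorphism and polarization data, $\mathcal{M}_{(n-1,1)}$ is an algebraic stack of finite type over $\Spec \Oo_{\kay}$. Regularity, flatness, and reducedness of fibers can all be verified étale-locally, and by Grothendieck--Messing deformation theory the étale-local structure is governed by a local model $M^{\mathrm{loc}}$ parametrizing data $(\mathcal{F},V)$ in which $V$ is a free $\Oo_{\kay}\otimes\Oo_S$-module of rank $n$ (modelled on $H_1^{dR}(A)$) and $\mathcal{F}\subset V$ is an $\Oo_S$-direct summand of rank $n-1$ satisfying Krämer's condition: $\Oo_{\kay}$ acts on $\mathcal{F}$ through the structure map $\varphi$, and on $V/\mathcal{F}$ through $\bar\varphi$.

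The analysis then splits along the ramified locus. For primes $p\nmid D$, the algebra $\Oo_{\kay}\otimes \Z_p$ is either $\Z_p\times\Z_p$ or an unramified quadratic extension, so the idempotent decomposition forces $\mathcal{F}$ to be the $\bar\varphi$-eigenspace of $V$; in particular $\mathcal{F}$ is rigid and $M^{\mathrm{loc}}$ is smooth of relative dimension $n-1$ over $\Spec\Oo_{\kay}[1/D]$. For a prime $p\mid D$, the essential point is the interplay between the sections $\epsilon$ and $\bar\epsilon$ introduced in the notation section, which satisfy $\epsilon\bar\epsilon = -\pi\bar\pi + \mathrm{(stuff)}$ and in particular vanish to first order in the special fiber. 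Krämer's condition translates into the pair of equations $\epsilon \cdot V \subset \mathcal{F}$ and $\bar\epsilon\cdot \mathcal{F} = 0$ on the local model. After trivializing $V \cong (\Oo_{\kay}\otimes\Oo_S)^n$ and writing $\mathcal{F}$ Zariski-locally as the graph of an $(n-1)\times 1$ matrix of coordinate functions, these conditions become explicit polynomial equations in the entries of the matrix and in the image of $\pi$.

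The final step is to verify regularity by the Jacobian criterion directly on these equations, and then deduce flatness from the fact that the resulting scheme has fibers of the expected dimension, with reducedness of the special fibers visible from the coordinate description (the special fiber decomposes into two irreducible components meeting along a smooth divisor, each of which is reduced). The main obstacle is precisely the worst degeneracy: the closed point at which $\mathcal{F} = \epsilon V = \ker\bar\epsilon$ in the special fiber, where the naive moduli problem (without the data of $\mathcal{F}_A$) acquires a non-regular singularity. Krämer's modification was engineered exactly to blow up this locus and restore regularity, and the computation carried out in Krämer's thesis \cite{Kr} (together with its extension in \cite{BHKRY1}) handles this critical case by showing that the completed local ring at such a point is isomorphic to a power series ring in $n$ variables modulo a single equation of the form $xy - \pi$-power, hence regular of dimension $n$ and flat over $\Oo_{\kay}$.
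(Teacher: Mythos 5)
First, a point of comparison: the paper does not prove this statement at all --- it is quoted verbatim from \cite[Theorem 2.3.3]{BHKRY1}, whose proof (going back to Kr\"amer \cite{Kr}) is indeed by the local-model strategy you outline, so your overall route is the intended one. However, the sketch has concrete problems. The most serious is the description of the local model itself, which must record \emph{two} flags: the Hodge filtration $\Fil(A)\subset H_1^{\dR}(A)$, i.e.\ a totally isotropic, $\Oo_{\kay}\otimes\Oo_S$-stable, $\Oo_S$-rank-$n$ direct summand of a rank-$2n$ symplectic module, together with Kr\"amer's flag $\cF\subset \Lie(A)=H_1^{\dR}(A)/\Fil(A)$ of $\Oo_S$-rank $n-1$. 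You instead place a rank-$(n-1)$ summand $\cF$ directly inside $V\cong H_1^{\dR}(A)$ (rank $2n$ over $\Oo_S$) and impose that $\Oo_{\kay}$ act via $\varphi$ on $\cF$ and via $\bar\varphi$ on $V/\cF$; away from the ramified primes this forces the $\varphi$-eigenspace of $V$ (rank $n$) into the rank-$(n-1)$ module $\cF$, so the local model as you define it is empty on the generic fiber. The Hodge filtration and the isotropy condition coming from the polarization cannot be omitted, and the equations ``$\epsilon V\subset\cF$, $\bar\epsilon\cF=0$'' are versions of the eigenspace conditions on the \emph{Hodge filtration} defining the naive (Pappas) local model, not a translation of Kr\"amer's condition on the extra flag. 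Relatedly, $\epsilon\bar\epsilon=0$ holds identically on any $\Oo_{\kay}$-scheme (this identity is used in the present paper, e.g.\ in the proof of Lemma \ref{lem:equality of bundles outside Exc}), not merely ``to first order in the special fiber.''

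Second, even with the local model set up correctly, the proposal has no content at the one point where the theorem is nontrivial. Away from $D$ everything is smooth and there is nothing to prove; at a ramified prime $p$ the entire theorem \emph{is} the explicit verification that Kr\"amer's local model is regular with reduced special fiber at the worst point (where $\Lie$ degenerates to $\ker\bar\epsilon=\epsilon V$), and your sketch simply defers this to \cite{Kr}. Moreover the shape you assert for the completed local ring, a power series ring modulo ``$xy-\pi$-power,'' implies regularity only when the exponent is $1$ (for exponent $\ge 2$ the relation lies in the square of the maximal ideal and the ring is not regular), so as stated it does not yield the conclusion. What Kr\"amer actually proves, consistent with Lemma \ref{lem:cZ_p} of this paper, is that the special fiber is the union of two components --- the exceptional divisor, a disjoint union of $\mathbb{P}^{n-1}$'s, and the divisor $\cZ_p$ --- each regular, with the total space regular; this is checked in explicit affine charts. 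Citing that computation is legitimate, but then your argument reduces to the citation the paper already makes, with the surrounding reductions misstated.
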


Finally, for the genus class $[[L]]$ of $L$, we define 
\[\cS\subset \cM_{(1,0)}\times \cM_{(n-1,1)} \]
to be the open and closed substack such that $\cS(S)$ is the groupoid of tuples
$$\left(A_0,\iota_0,\lambda_0, A,\iota,\lambda\right) \in \mathcal{M}_{(1,0)}(S) \times \mathcal{M}_{(n-1,1)}(S)$$
such that 
 at every geometric point $s \rightarrow S$, there exists  an isomorphism of hermitian $\mathcal{O}_{\boldsymbol{k}, \ell}$-modules
\begin{align}\label{eq: iso tate and L}
\operatorname{Hom}_{\Oo_{\kay}}\left(T_{\ell} A_{0, s}, T_{\ell} A_s\right) \cong \operatorname{Hom}_{\Oo_{\kay}}\left(\mathfrak{a}_0, \mathfrak{a}\right) \otimes \mathbb{Z}_{\ell}=L_\ell,
\end{align}
for every finite prime $\ell \neq \mathrm{char}(k(s))$. Here the hermitian form on the right hand side is defined as in \eqref{eq: herm form rhs}. For the hermitian form on the left hand side, we define it similarly by replacing the symplectic forms \eqref{eq: symp form} on $W_0$ and $W$ with the Weil pairings on the Tate modules $T_{\ell} A_{0, s}$ and $T_{\ell} A_s$ ($A_s$ being the pullback of the universal object $\mathbf{A}$ over $\mathcal S$ to $s$) induced by their polarizations. The generic fiber $\mathcal{S}\times_{\Spec \Oo_{\kay}} \Spec\kay$ is the Shimura variety $\operatorname{Sh}(G, \mathcal{D})$.

Now assume that $p\in \Z$ is a prime ramified in $\kay$ (i.e. dividing $D$). Following \cite[Appendix A]{Ho2} we define the exceptional divisor $\Exc_p$ to be the locus of $\mathcal{S}\times_{\Spec\Oo_{\kay,p}} \Spec\F_p$ where a geometric point $s\in \mathcal{S}(\bar\F_p)$ is in $\Exc_p$ if the action $\Oo_{\kay}\rightarrow \Lie A_s$ factor through the reduction homomorphism $\Oo_{\kay}\rightarrow \F_p$. Then $\Exc_p$ can be given the structure of a reduced substack of $\mathcal{S}$ and is in fact a Cartier divisor consisting of disjoint unions of $\bP^{n-1}$ over $\F_p$ (\cite[Theorem 2.3.4]{BHKRY1}, \cite[Proposition A.2]{Ho2}). Finally, we define 
\[\Exc=\bigsqcup_{p|D} \Exc_p. \]

\subsection{Special divisors}
For a connected $\mathcal{O}_{k}$-scheme $S$ and 
$$
\left(A_0,\iota_0,\lambda_0, A,\iota,\lambda\right) \in \mathcal{S}(S),
$$
we can define a positive definite hermitian form on $\operatorname{Hom}_{\Oo_{\kay}}\left(A_0, A\right)$  by
\begin{equation}\label{eq:hermitian form on Hom}
 \left( x_1, x_2\right)=\iota_0^{-1}(\lambda_0^{-1} \circ x_2^\vee \circ \lambda \circ x_1) \in \Oo_{\kay}
\end{equation}
where $x_2^{\vee}: A^\vee\rightarrow A_0^\vee$ is the dual homomorphism of $x_2$. By \cite[Lemma 2.7]{KR2}, the form $\left( ,\right)$ is positive-definite.

\begin{definition}(\cite[Definition 2.8]{KR2})
For any  $m \in \mathbb{Z}_{>0}$, define  $\mathcal{Z}(m)$  to be the moduli stack assigning to a connected $\Oo_{\kay}$-scheme $S$ the groupoid of tuples $\left(A_0,\iota_0,\lambda_0, A,\iota,\lambda,x\right)$, where
\begin{enumerate}
    \item $\left(A_0,\iota_0,\lambda_0, A,\iota,\lambda\right) \in \mathcal{S}(S)$,
    \item $x \in \operatorname{Hom}_{\Oo_{\kay}}\left(A_0, A\right)$ satisfies $( x, x)=m$.
\end{enumerate}    
\end{definition}
According to the discussion in \cite[\S 2.5]{BHKRY1}, we may regard $\cZ(m)$ as a Cartier divisor on $\cS$.  

In the rest of the section, we recall the construction of the toroidal compactification of $\mathcal S$ following \cite[\S 3]{BHKRY1} and \cite[\S 2]{Ho1}.

\subsection{Cusp labels}\label{subsec:cusp label}
A proper cusp label is an isomorphism class of pairs $\Phi=(\mathfrak{n}, L_{\Phi})$ in which $\mathfrak{n}$ is a projective $\mathcal{O}_{\kay}$-module of rank one, and $L_{\Phi}$ is a unimodular Hermitian lattice of signature $(n-2,0)$. Consider a pair $\mathfrak{m} \subset M$  where  $M$ is a  unimodular hermitian lattice of signature $(n-1, 1)$ and $\mathfrak{m}$ is an isotropic direct summand of rank one. A normal decomposition of $\mathfrak{m} \subset M$ is an $\mathcal{O}_{\boldsymbol{k}}$-module direct sum decomposition
\begin{align}\label{eq: normal decom}
 M=(\mathfrak{m}\oplus\mathfrak{n} ) \obot L_{\Phi}
\end{align}
where $L_{\Phi}=(\mathfrak{m} \oplus \mathfrak{n})^{\perp}$ and $\mathfrak{n}$ is an isotropic direct summand of rank one and can be identified as $\Hom_{\Oo_{\kay}}(\mathfrak m, \Oo_{\kay})$. The hermitian form on  $M$  makes $L_{\Phi} \cong$ $\mathfrak{m}^{\perp} / \mathfrak{m}$ into a unimodular hermitian lattice of signature $(n-2,0)$.
\begin{lemma}\label{lem: eq cusp}\cite[Proposition 2.6.3]{Ho1}
Every pair $\mathfrak{m} \subset M$ as above admits a normal decomposition. The rule
$$
\mathfrak{m} \subset M \mapsto\left( M / \mathfrak{m}^{\perp}, \mathfrak{m}^{\perp} / \mathfrak{m}\right) \cong(\mathfrak{n}, L_{\Phi})
$$
establishes a bijection between the isomorphism classes of pairs $\mathfrak{m} \subset M$ as above, and the set of cusp labels.
\end{lemma}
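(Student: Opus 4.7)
The plan is to prove the two claims of the lemma in turn: existence of a normal decomposition, and bijectivity of the rule.

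\emph{Existence of a normal decomposition.} The strategy is to first produce the hyperbolic plane $\mathfrak{m}\oplus\mathfrak{n}$ by splitting and then take $L_\Phi$ as its orthogonal complement. Unimodularity of $L$ identifies $L$ with its $\Oo_{\kay}$-dual via the hermitian form; composition with the restriction to $\mathfrak{m}$ produces a short exact sequence
\begin{equation*}
0 \longrightarrow \mathfrak{m}^\perp \longrightarrow L \longrightarrow \Hom_{\Oo_{\kay}}(\mathfrak{m},\Oo_{\kay}) \longrightarrow 0
\end{equation*}
of projective $\Oo_{\kay}$-modules. Since the quotient is projective the sequence splits, giving a rank-one $\Oo_{\kay}$-direct summand $\mathfrak{n}_0 \subset L$ complementary to $\mathfrak{m}^\perp$ and canonically isomorphic to $\Hom_{\Oo_{\kay}}(\mathfrak{m},\Oo_{\kay})$, with the restriction of the pairing to $\mathfrak{n}_0\times\mathfrak{m}$ being the canonical evaluation pairing. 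The summand $\mathfrak{n}_0$ need not be isotropic, so I would modify the splitting by an $\Oo_{\kay}$-homomorphism $\phi\colon \mathfrak{n}_0 \to \mathfrak{m}$ and analyze the effect: because $\mathfrak{m}$ is isotropic, the self-pairing of the new section satisfies
\begin{equation*}
(e+\phi(e),\,e+\phi(e)) = (e,e) + \operatorname{Tr}_{\kay/\Q}\bigl((e,\phi(e))\bigr).
\end{equation*}
As $\phi$ varies the value $(e,\phi(e))$ ranges over all of $\Oo_{\kay}$ by perfectness of the evaluation pairing, and surjectivity of $\operatorname{Tr}_{\kay/\Q}\colon \Oo_{\kay} \to \Z$ (guaranteed because $-D$ is an odd fundamental discriminant, so $(1+\sqrt{-D})/2 \in \Oo_{\kay}$ has trace $1$) allows cancellation of the original self-pairing $(e,e)\in\Z$. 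The resulting $\mathfrak{n}$ is isotropic; together with $\mathfrak{m}$ it forms a hyperbolic plane, whose nondegeneracy forces the orthogonal decomposition $L=(\mathfrak{m}\oplus\mathfrak{n})\obot L_\Phi$ with $L_\Phi \coloneqq (\mathfrak{m}\oplus\mathfrak{n})^\perp$ automatically unimodular of signature $(n-2,0)$.

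\emph{Bijection.} For well-definedness of the forward map $(\mathfrak{m}\subset L)\mapsto (L/\mathfrak{m}^\perp,\,\mathfrak{m}^\perp/\mathfrak{m})$, the quotient $L/\mathfrak{m}^\perp$ is a rank-one projective $\Oo_{\kay}$-module by the sequence above, and $\mathfrak{m}^\perp/\mathfrak{m}$ inherits a hermitian form of signature $(n-2,0)$ that is unimodular by a standard duality argument from $L\cong L^\vee$. I would exhibit an explicit inverse: given a cusp label $(\mathfrak{n},L_\Phi)$, set $\mathfrak{m}\coloneqq\Hom_{\Oo_{\kay}}(\mathfrak{n},\Oo_{\kay})$, place on $\mathfrak{m}\oplus\mathfrak{n}$ the hyperbolic hermitian form induced by the canonical evaluation pairing, and define $L\coloneqq (\mathfrak{m}\oplus\mathfrak{n})\obot L_\Phi$; then $\mathfrak{m}\subset L$ is a pair as in the lemma, manifestly with cusp label $(\mathfrak{n},L_\Phi)$, which gives surjectivity. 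For injectivity, any two pairs with isomorphic cusp labels admit normal decompositions whose $L_{\Phi,i}$ factors are isomorphic and whose hyperbolic planes $\mathfrak{m}_i\oplus\mathfrak{n}_i$ are determined up to isomorphism by $\mathfrak{n}_i$ alone (a hyperbolic plane on $\Hom_{\Oo_{\kay}}(\mathfrak{n},\Oo_{\kay})\oplus\mathfrak{n}$ is unique up to isomorphism), so the pairs themselves are isomorphic.

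\emph{Main obstacle.} The delicate step is the isotropization of $\mathfrak{n}_0$: abstractly, one must realize the negative of an arbitrary integer-valued hermitian self-pairing on the rank-one projective module $\mathfrak{n}_0$ via the symmetrization $C\mapsto C+C^\ast$ of a sesquilinear form coming from $\phi$. By localizing at each prime of $\Oo_{\kay}$ the module $\mathfrak{n}_0$ becomes free of rank one, reducing the surjectivity of the symmetrization to surjectivity of $\operatorname{Tr}_{\kay/\Q}\colon\Oo_{\kay}\to\Z$; this is where the hypothesis on $-D$ is essential. The remaining manipulations are formal bookkeeping with splittings of short exact sequences of projective $\Oo_{\kay}$-modules.
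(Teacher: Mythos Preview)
The paper does not give its own proof of this lemma; it is simply quoted from \cite[Proposition 2.6.3]{Ho1}. Your argument is correct and is the standard one: split off a complement $\mathfrak{n}_0$ to $\mathfrak{m}^\perp$ using projectivity, then shear by a homomorphism $\phi\colon\mathfrak{n}_0\to\mathfrak{m}$ to kill the self-pairing, and use the resulting hyperbolic plane to split $L$ orthogonally. You have correctly isolated the one nontrivial point, namely that the symmetrization map $C\mapsto C+C^\ast$ is surjective onto the integer-valued hermitian forms on $\mathfrak{n}_0$, and that this reduces to surjectivity of $\operatorname{Tr}_{\kay/\Q}\colon\Oo_{\kay}\to\Z$, which is exactly where the standing hypothesis that $D$ is odd enters.

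One small sharpening: your displayed formula is written in terms of a generator $e$ of $\mathfrak{n}_0$, but $\mathfrak{n}_0$ need not be globally free. You handle this by localization in the final paragraph, which is fine, but in fact no localization is needed here: since the hermitian pairing identifies $\mathfrak{n}_0$ with the \emph{conjugate} dual of $\mathfrak{m}$, one has $\Hom_{\Oo_{\kay}}(\mathfrak{n}_0,\mathfrak{m})\cong\mathfrak{m}\otimes_{\Oo_{\kay}}\overline{\mathfrak{m}}^{\,-1}\cdot\overline{\mathfrak{m}}\cong N(\mathfrak{m})\cdot\Oo_{\kay}$, which is already free of rank one, so the choice of $\phi$ is global from the start. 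This lets you run the trace argument directly without passing to local rings. The bijection part is routine and your sketch is adequate.
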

Recall that $L$ is the unimodular lattice defined in \ref{eq:L}.
\begin{definition}\label{def: cusp label}
We define $\mathrm{Cusp}(M)$ to be the set of proper cusp labels  $\Phi=(\mathfrak{n},L_{\Phi})$ such that there exists a  $\mathfrak{m}$  with $M=\mathfrak{m}\oplus\mathfrak{n}  \obot L_{\Phi}$.  Moreover, we denote $\mathrm{Cusp}([[L]])=\coprod_{M\in [[L]]}\mathrm{Cusp}(M)$. 
\end{definition}

Because of Lemma \ref{lem: eq cusp}, we also denote a proper cusp label as $\Phi=(\mathfrak m\subset M) $.
The definition of proper cusp labels in \cite{BHKRY1} is different with Definition \ref{def: cusp label}. However, 
according to \cite[Lemma 3.1.4]{BHKRY1} and Lemma \ref{lem: eq cusp}, there is a natural bijection between the set of equivalence classes of proper cusp labels defined in \cite{BHKRY1} and the one defined in Definition \ref{def: cusp label}. 
Indeed,  assume $(P,g)$ is as in  \cite[Definition 3.1.1]{BHKRY1}, then $gL=M$ is a self-dual $\OO_{\kay}$-lattice and $P$ determines an isotropic line $J\subset W$. Then $\mathrm{Hom}_{\kay}(W_0,J)\cap L$ determines an isotropic dirrect summand of rank one. Then \cite[Lemma 3.1.4]{BHKRY1} shows that this map is an injection. Now given $(\mathfrak{m},M)$, we can choose $g$ such that $gL=M$. Then we can choose $J\subset W$ such that $\mathrm{Hom}_{\kay}(W_0,J)\cap L=\mathfrak{m}$ and set $g=\mathrm{Stab}_G(J)$.

\subsection{Degenerating abelian schemes}\label{subsec:degenerating abelian schemes}
We review the theory of degenerating abelian scheme following \cite[\S 2.3]{Ho1} and \cite[\S 5.1]{Lan}.
For a projective $\Oo_{\kay}$-module $\mathfrak{p}$ of rank $1$, let $\underline{\mathfrak{p}}$ be the associated constant $\Oo_{\kay}$-module scheme over $\Spec \Z$. Let $X$ be an $\Oo_{\kay}$-stack, $Z\rightarrow X$ be a closed substack, and $U\subset X\setminus Z$ be a dense open substack. We remark that we can also take $U$ to be the generic point of $X$ when $X$ is an irreducible scheme and the following discussion will be the same. Let $\Phi=(\mathfrak{n},L_\Phi)$ or $(\mathfrak{m}\subset M)$ be a cusp label as in \S \ref{subsec:cusp label}.
Let $\Lambda_\Phi=\Hom_{\Oo_{\kay}}(L_\Phi,\Oo_{\kay})$ be the hermitian dual of $L_\Phi$.

\begin{definition}
    A semiabelian scheme over $X$ is a smooth commutative group scheme $G\rightarrow X$, such that for every geometric point $z\rightarrow X$ the fiber $G_z$ is an extension 
    \[0\rightarrow T\rightarrow G_z\rightarrow B\rightarrow 0\]
    of an abelian variety by a torus.
\end{definition}

\begin{definition}\label{def:degenerating abelian scheme}
    A degenerating abelian scheme of type $\Phi$ relative to $(X,Z,U)$ is a triple $(G,\iota,\lambda)$ such that 
    \begin{itemize}
        \item $G$ is a semi-abelian scheme over $X$ such that $G_U$ is an abelian scheme;
        \item $\iota:\Oo_{\kay}\rightarrow \End(G_U)$ is an action of $\Oo_{\kay}$ on $G_U$;
        \item $\lambda: G_U \rightarrow G_U^{\vee}$ is a principal polarization whose induced Rosati involution $\dagger$ on $\End^0(G_U)$ satisfies $\iota(\alpha)^{\dagger}=\iota(\bar{\alpha})$ for all $\alpha \in \Oo_{\kay}$;
        \item there is an abelian scheme $B_Z$ over $Z$ equipped with an $\Oo_{\kay}$-action, and an $\Oo_{\kay}$-linear exact sequence
        \[0\rightarrow \mathfrak{m}\otimes_\Z \mathbb{G}_m\rightarrow G_Z\rightarrow B_Z\rightarrow 0. \]
    \end{itemize}
    If in addition 
    \begin{itemize}
        \item there is an $\Oo_{\kay}$-stable $\Oo_U$-module local direct summand $\mathcal{F} \subset \operatorname{Lie}(G_U)$ of rank $n-1$ satisfying Krämer's condition as in Condition \eqref{item:Kramer} in the definition of $\cM_{(r,s)}$;
        \item $(A_0,\iota_0,\lambda_0)\in \cM_{(1,0)}(X)$;
        \item there is an isomorphism of étale sheaves of hermitian $\mathcal{O}_{\boldsymbol{k}}$-modules over $Z$:
        \[\underline{\Lambda_\Phi}\cong \underline{\operatorname{Hom}}_{\Oo_{\kay}}\left(B_Z, A_0\right);\]
    \end{itemize}
    then we say $(A_0,\iota_0,\lambda_0,G,\iota,\lambda,\cF)$ is a degenerating abelian scheme of type $\Phi$ and signature $(n-1,1)$ relative to $(X,Z,U)$. We denote the category of degenerating abelian scheme of type $\Phi$ (and signature $(n-1,1)$ resp.) relative to $(X,Z,U)$ as $\mathrm{DEG}^\Phi(X,Z,U)$  ($\mathrm{DEG}^\Phi_{(n-1,1)}(X,Z,U)$ resp.), with isomorphisms in the obvious sense being morphisms.
\end{definition}

\begin{definition}\label{def:degeneration data}
    Degeneration data of type $\Phi$ relative to $(X,Z,U)$ consist of  tuples $(B, \kappa, \psi, c, c^\vee, \tau)$
    such that
    \begin{itemize}
        \item $B\rightarrow X$ is an abelian scheme;
        \item $\kappa:\Oo_{\kay}\rightarrow \End(B)$ is an action of $\Oo_{\kay}$ on $B$;
        \item $\psi: B \rightarrow B^{\vee}$ is a principal polarization whose induced Rosati involution $\dagger$ on $\End^0(B)$ satisfies $\iota(\alpha)^{\dagger}=\iota(\bar{\alpha})$ for all $\alpha \in \Oo_{\kay}$;
        \item $c:\underline{\mathfrak{n}}_{/X}\rightarrow B^\vee$ and $c^\vee:\underline{\mathfrak{n}}_{/X}\rightarrow B$ are $\Oo_{\kay}$-module maps satisfying $c=\psi\circ c^\vee$;
        \item $\tau$ is a  positive, symmetric, and $\Oo_{\kay}$-linear isomorphism
        \begin{equation}\label{eq:tau}
    \tau: 1_{(\underline{\mathfrak{n}}\times \underline{\mathfrak{n}})|_{U}}\rightarrow (c^\vee\times c)^* (\mathcal{P}^{-1})|_{(\underline{\mathfrak{n}}\times \underline{\mathfrak{n}})|_{U}} 
\end{equation}
        of $\bG_m$-biextensions of $(\underline{\mathfrak{n}}\times \underline{\mathfrak{n}})|_{U}$ (see below). Here $\mathcal{P}$ is the Poincar\'e sheaf on $B\times B^\vee$.
    \end{itemize}
    If in addition 
    \begin{itemize}
        \item  $(A_0,\iota_0,\lambda_0)\in \cM_{(1,0)}(X)$;
        \item  $B\in \cM_{(n-2,0)}(X)$ and there is an isomorphism of étale sheaves of hermitian $\mathcal{O}_{\boldsymbol{k}}$-modules over $X$:
        \[\underline{\Lambda_\Phi}\cong \underline{\operatorname{Hom}}_{\Oo_{\kay}}\left(B, A_0\right);\]
    \end{itemize}
    then we say $(A_0,\iota_0,\lambda_0,B, \kappa, \psi, c, c^\vee, \tau)$ is degeneration data of type $\Phi$ and signature $(n-1,1)$ relative to $(X,Z,U)$.   We denote the category of degeneration data of type $\Phi$ (and signature $(n-1,1)$ resp.) relative to $(X,Z,U)$ as $\mathrm{DD}^\Phi(X,Z,U)$  ($\mathrm{DD}^\Phi_{(n-1,1)}(X,Z,U)$ resp.), with isomorphisms in the obvious sense being morphisms.
\end{definition}
We explain the meaning of $\tau$ in more detail. To give a $\mathbb{G}_m$-biextension on $\underline{\mathfrak{n}}\times \underline{\mathfrak{n}}|_X$ is equivalent to giving a collection of invertible sheaves $\mathcal{E}(\mu,\nu)_{(\mu,\nu)\in \mathfrak{n}\times \mathfrak{n}}$  on $X$, together with isomorphisms
\[\mathcal{E}(\mu_1+\mu_2,\nu)\cong \mathcal{E}(\mu_1,\nu)\otimes \mathcal{E}(\mu_2,\nu) \]
and 
\[\mathcal{E}(\mu,\nu_1+\nu_2)\cong \mathcal{E}(\mu,\nu_1)\otimes \mathcal{E}(\mu,\nu_2) \]
satisfying certain partial group axioms. Denote by $\cL(\mu,\nu)$ the pullback of the Poincar\'e line bundle under the morphism $c^\vee(\mu)\times c(\nu):X\rightarrow B\times B^\vee$.
It follows from the standard bilinear properties of Poincar\'e bundles that $\cL(\mu,\nu)|_{(\mu,\nu)\in \mathfrak{n}\times \mathfrak{n}}$ determines a $\mathbb{G}_m$-biextension of $\underline{\mathfrak{n}}\times \underline{\mathfrak{n}}$ over $X$. Moreover, the $\Oo_{\kay}$-linearality of the polarization of $B$ guarantees that  $\cL(\mu,\nu)$, up to canonical isomorphism, only depends on the image of $\mu,\nu$ in 
\[\Sym_\Phi=\Sym^2_\Z(\mathfrak{n})/\langle (x\mu)\otimes \nu-\mu\otimes (\bar x \nu):x\in \Oo_{\kay}, \mu,\nu \in \mathfrak{n} \rangle. \]
Thus for each $\chi\in \Sym_\Phi$, we may associate a line bundle $\cL(\chi)$ on $\cB_\Phi$, such that there are canonical isomorphisms 
\[\cL(\chi)\otimes \cL(\chi')\cong \cL(\chi+\chi').\]
Our assumption that $D$ is odd implies that $\Sym_\Phi$ is a free $\Z$-module of rank $1$. There is a positive cone in $\Sym_\Phi\otimes_\Z \R$ uniquely determined by the condition $\mu\otimes\mu \geq 0$ for all $\mu\in \mathfrak n$. Thus all the line bundles $\cL(\chi)$ are powers of the distinguished line bundle
\begin{equation}\label{eq:cL_Phi}
    \cL_\Phi=\cL(\chi_0),
\end{equation}
determined by the unique positive generator $\chi_0\in \Sym_\Phi$.
Let $ 1_{(\underline{\mathfrak{n}}\times \underline{\mathfrak{n}})|_{U}}$ be the constant collection of invertible sheaves $\mathcal{O}_{\cC_\Phi}$. {\it The  positivity condition} means   that for every $\mu\in \mathfrak{n}$, the isomorphism $\tau(\mu,\mu)$ extends (ncecessarily uniquely) to a homomorphism
\[\tau(\mu,\mu):\Oo_{X}\rightarrow (c(\mu)^\vee\times c(\mu))^* (\mathcal{P}^{-1})\]
and if $\mu\neq 0$, the homomorphism becomes trivial after restricting to $Z$.

There is a functor $M^\Phi(X,Z,U):\mathrm{DD}^\Phi(X,Z,U)\rightarrow \mathrm{DEG}^\Phi(X,Z,U)$. We briefly recall its construction. Suppose $(B, \kappa, \psi, c, c^\vee, \tau)\in \mathrm{DD}^\Phi(X,Z,U)$. In particular we get a homomorphism of fppf sheaves $c^\vee \in \underline{\Hom}_{\Oo_{\kay}}(\mathfrak{n},B)$ over $X$. Since 
\[\underline{\Hom}_{\Oo_{\kay}}(\mathfrak{n},B)\cong \underline{\mathrm{Ext}}^1_{\Oo_{\kay}}(B^\vee,\mathfrak{n}^\vee\otimes_\Z \bG_m)\] 
(see for example \cite[Proposition 3.1.5.1]{Lan}), $c^\vee$ determines a semi-abelian scheme $(G^\sharp)^\vee$ over $X$, such that there is an exact sequence of fppf sheaves of $\Oo_{\kay}$-modules 
\begin{equation} \label{eq:pi-prime}
0\rightarrow \mathfrak{m}\otimes_\Z \bG_m \rightarrow (G^\sharp)^\vee \xrightarrow{\pi'} B^\vee\rightarrow 0.
\end{equation}
Similarly $c$ determines a semi-abelian scheme $G^\sharp$ over $X$, such that there is an exact sequence of fppf sheaves of $\Oo_{\kay}$-modules
\begin{equation}\label{eq:pi}  
0\rightarrow \mathfrak{m}\otimes_\Z \bG_m \rightarrow G^\sharp \xrightarrow{\pi} B\rightarrow 0.
\end{equation}
By \cite[Lemma 3.4.2]{Lan}, the condition $c=\psi\circ c^\vee$ guarantees that there is an $\Oo_{\kay}$-linear isomorphism $\lambda^\sharp:G^\sharp \rightarrow (G^\sharp)^\vee$.
By \cite[\S 4.2]{Lan}, the datum $\tau$ in \eqref{eq:tau} gives us $1$-motives $M=[\underline{\mathfrak{n}}\xrightarrow{u} G^\sharp]$, and $M^\vee=[\underline{\mathfrak{n}}\xrightarrow{v} (G^\sharp)^\vee]$ over $X$, where $u,v$ are morphisms of fppf sheaves of $\Oo_{\kay}$-modules such that
\[\pi\circ u=c^\vee,\ \pi'\circ v=c. \]
By the proof of \cite[Proposition 3.3.3]{BHKRY1}, the morphism $M\rightarrow M^\vee$ induced by the identity map of $\mathfrak n$ and  $\lambda^\sharp$ is a principal polarization of $M$ in the sense of \cite[\S 10.2.11]{deligne1974}, which is compatible with the given polarization $\psi:B\rightarrow B^\vee$ and with the isomorphism $\mathfrak m \cong \mathfrak{n}^\vee$.

From now on we assume $R$ is normal and complete with respect to $I$, $X=\Spec R$, $Z$ is the closed subscheme $\Spec R/I$ of $X$, and $U$ is the open subscheme $X\setminus Z$ of $X$. 
Let us recall Mumford's construction (see \cite[\S 4.5]{Lan}). Let ${}^\heart G$ be the (analytic) quotient of $G^\sharp$ by the image of the period map $\underline{\mathfrak{n}}\xrightarrow{u} G^\sharp$. Then the $\Oo_{\kay}$-action $\kappa$ descends to an $\Oo_{\kay}$-action ${}^\heart \iota:\Oo_{\kay}\rightarrow \End({}^\heart G)$, and the principal polarization of $M$ gives us the principal polarization of ${}^\heart G$. The positivity of $\tau$ guarantees that we get a degenerating abelian scheme $({}^\heart G,{}^\heart \iota,{}^\heart \lambda)$ relative to $(X,Z,U)$.

\begin{theorem}\label{thm:DD DEG equivalence}
Assume that  $R$ is a Noetherian domain complete with respect to an ideal $I$ satisfying $\mathrm{rad}(I)=I$, and 
\[(X,Z,U)=(\Spec R,\Spec (R/I),\eta) \]
where $\eta$ is the generic point of $\Spec R$. Then we have an equivalence of categories
    \begin{equation}\label{eq:DD DEG equivalence}
        M^\Phi(X,Z,U):\mathrm{DD}^\Phi(X,Z,U)\rightarrow \mathrm{DEG}^\Phi(X,Z,U)
    \end{equation}
Moreover, this restricts to an equivalence of categories 
 \[\mathrm{DD}^\Phi_{(n-1,1)}(X,Z,U)\rightarrow \mathrm{DEG}^\Phi_{(n-1,1)}(X,Z,U). \]
\end{theorem}
\begin{proof}
The fact that $ M^\Phi(X,Z,U)$ is an equivalence of category is a special case of \cite[Theorem 5.1.1.4]{Lan}. It remains to show that $\mathrm{DEG}^\Phi_{(n-1,1)}(X,Z,U)$ is the essential image of the  functor
\[ \mathrm{Id}\times M^\Phi(X,Z,U):\cM_{(1,0)}(X)\times \mathrm{DD}^\Phi_{(n-1,1)}(X,Z,U)\rightarrow \cM_{(1,0)}(X)\times \mathrm{DEG}^\Phi_{(n-1,1)}(X,Z,U) \]
when restricted on $\mathrm{DD}^\Phi_{(n-1,1)}(X,Z,U)$. This is true by \cite[Lemma 2.3.5]{Ho1} and \cite[Lemma 2.3.6]{Ho1}.
\end{proof}

\subsection{Formal boundary charts}\label{subsec:boundary chart}
We describe the boundary more explicitly following \cite[\S 3.3]{BHKRY1}. 
For $\left(A_0, B,\ldots\right) \in \mathcal{M}_{(1,0)}(S) \times_{\Oo_{\kay}} \mathcal{M}_{(n-2,0)}(S)$, the \'etale sheaf $\underline{\Hom}_{\Oo_{\kay}}\left(B, A_0\right)$ is locally constant by \cite[Theorem 5.1]{BHY}.
For a fixed cusp label $\Phi=(\mathfrak{n},L_\Phi)$, let $\Lambda_\Phi=\Hom_{\Oo_{\kay}}(L_\Phi,\Oo_{\kay})$ be the hermitian dual of $L_\Phi$.
Let $\mathcal{A}_{\Phi}$ be the moduli space of triples $\left(A_0, B, \varrho\right)$ over $\mathcal{O}_{\boldsymbol{k}}$-schemes $S$, where 
$$\left(A_0,\ldots, B,\ldots\right) \in \mathcal{M}_{(1,0)}(S) \times_{\Oo_{\kay}} \mathcal{M}_{(n-2,0)}(S),$$
and 
$$\varrho: \underline{\Lambda_\Phi}\cong \underline{\operatorname{Hom}}_{\Oo_{\kay}}\left(B, A_0\right)
$$
is an isomorphism of étale sheaves of hermitian $\mathcal{O}_{\boldsymbol{k}}$-modules. Then $\mathcal{A}_\Phi\rightarrow \Spec \Oo_{\kay}$ is smooth of relative dimension $0$.
Now we define $\mathcal{B}_{\Phi}$ to be the moduli space of quadruples $\left(A_0, B, \varrho, c^\vee\right)$, where for an $\mathcal{O}_{\boldsymbol{k}}$-schemes $S$ we have $\left(A_0, B, \varrho\right)\in\mathcal{A}_{\Phi}(S)$ and $c^\vee: \mathfrak{n} \rightarrow B$ is an $\mathcal{O}_{\boldsymbol{k}}$-linear homomorphism of group schemes over $S$. In other words, 
\begin{align}\label{eq: Boundary}
\mathcal{B}_{\Phi}=\underline{\operatorname{Hom}}_{\Oo_{\kay}}(\mathfrak{n}, B)
\end{align}
where $\left(A_0, B, \varrho\right)$ is the universal object over $\mathcal{A}_{\Phi}$. More explicitly, according to \cite[Proposition 3.4.4]{BHKRY1}, we have
\begin{equation}\label{eq:B_Phi}
    \mathcal{B}_\Phi\cong E\otimes_{\Oo_{\kay}} L_\Phi,
\end{equation}
where $E=\underline{\operatorname{Hom}}_{\Oo_{\kay}}(\mathfrak n, A_0) \in \mathcal{M}_{(1,0)}(S)$ and $\otimes$ is Serre's tensor construction.
The forgetful morphism $\mathcal{B}_\Phi\rightarrow \mathcal{A}_\Phi$ is smooth of relative dimension $n-2$.

Now define $\cB_\Phi$-stacks
\[\cC_\Phi=\underline{\mathrm{Iso}}(\cL_\Phi,\Oo_{\cB_\Phi}),\ \cC_\Phi^*=\underline{\mathrm{Hom}}(\cL_\Phi,\Oo_{\cB_\Phi}),\]
where $\cL_\Phi$ is as in \eqref{eq:cL_Phi}.
In other words, $\cC_\Phi^*$ is the total space of the line bundle $\cL_\Phi^{-1}$, and $\cC_\Phi$ is the complement of the zero section $\cB_\Phi\hookrightarrow \cC_\Phi^*$.  Relative to $(\cC_\Phi^*,\cB_\Phi,\cC_\Phi)$ there is a tautological degeneration data $(B, \kappa, \psi, c, c^\vee, \tau)$ of type $\Phi$ and signature $(n-1,1)$ where
\begin{itemize}
    \item $(B, \kappa, \psi)\in \cM(n-2,0)(\cC_\Phi^*)$ is as above,
    \item $c^\vee: \mathfrak{n} \rightarrow B$ is as above and $c=\psi\circ c^\vee$,
    \item $\tau$ is determined by $\cC_\Phi^*$.
\end{itemize}

Finally define $\Delta_{\Phi}$ to be the finite group 
$$\Delta_{\Phi} = \mathrm{U}\left(\Lambda_\Phi\right) \times \mathrm{GL}_{\Oo_{\kay}}(\mathfrak{n}).$$
The group $\Delta_{\Phi}$ acts on $\mathcal{B}_{\Phi}$ by (see \cite[Remark 3.3.2]{BHKRY1})
\[ (u,t)\cdot (A_0, B, \varrho,c)=(A_0, B, \varrho\circ u^{-1} ,c\circ t^{-1}),\ (u,t)\in \mathrm{U}\left(\Lambda_\Phi\right) \times \mathrm{GL}_{\Oo_{\kay}}(\mathfrak{n}).\]
The line bundle $\cL_\Phi$ is invariant under $\Delta_\Phi$, hence the action lifts to both $\cC_\Phi$ and $\cC_\Phi^*$.

\subsection{Toroidal compactification}\label{subsec:toroidal compactification}
For each geometric point $z$ of $\cB_\Phi$ (viewed as a geometric point of $\cC_\Phi^*$ via the zero section $\cB_\Phi \hookrightarrow \cC_\Phi^*$), let $R_z$ be the \'etale local ring of $\cC_\Phi^*$ at $z$, and $I_z$ be the ideal defined by the divisor $\cB_\Phi\hookrightarrow \cC_\Phi^*$. Let $\hat{R}_z$ be the completion of $R_z$ with respect to $I_z$, and let $\hat{\eta}_z$ be the generic point of $\hat{R}_z$. As $\cC_\Phi^*$ is smooth over $\Oo_{\kay}$, both $R_z$ and $\hat{R}_z$ are Noetherian normal domains. By applying Theorem  \ref{thm:DD DEG equivalence} to the pullback of the tautological degeneration data relative to $(\cC_\Phi^*,\cB_\Phi,\cC_\Phi)$, we get a degenerating abelian scheme $({}^\heart G_z,{}^\heart \iota_z,{}^\heart \lambda_z,{}^\heart \cF_z)$ of type $\Phi$ and signature $(n-1,1)$ relative to $(\Spec \hat{R_z},\Spec (\hat{R}_z/I_z),\hat{\eta}_z)$. For every \'etale neighborhood $X^{(z)}\rightarrow \cC_\Phi^*$ of a geometric point $z$, define a closed substack of $X^{(z)}$ by 
\[ Z^{(z)}=\cB_\Phi \times_{\cC_\Phi^*} X^{(z)}, \]
and an open substack 
\[ U^{(z)}=\cC_\Phi \times_{\cC_\Phi^*} X^{(z)}. \]
\begin{proposition}\label{prop:good algebraic neighborhood}
    For every geometric point $z$ of $\cB_\Phi$ there is an \'etale neighborhood $X^{(z)}\rightarrow \cC_\Phi^*$ of $z$ and a degenerating abelian scheme $(G^{(z)},\iota^{(z)},\lambda^{(z)}, \cF^{(z)})$ of type $\Phi$ and signature $(n-1,1)$ relative to $(X^{(z)},Z^{(z)},U^{(z)})$ with the following properties.
    \begin{itemize}
        \item There exists an automorphism of $\Oo_{\kay}$-scheme $\gamma:\Spec\hat{R}_z\rightarrow \Spec\hat{R}_z$ inducing the identity on $\Spec\hat{R}_z/I_z$ such that 
        \[(G^{(z)},\iota^{(z)},\lambda^{(z)}, \cF^{(z)})|_{\Spec \hat{R}_z} \cong \gamma^*({}^\heart G_z,{}^\heart \iota_z,{}^\heart \lambda_z,{}^\heart \cF_z),\]
        where the left hand side is the pullback of $(G^{(z)},\iota^{(z)},\lambda^{(z)}, \cF^{(z)})$ via the canonical map $\Spec (\hat{R}_z)\rightarrow X^{(z)}$.
        \item The tuple $(G^{(z)},\iota^{(z)},\lambda^{(z)}, \cF^{(z)})|_{U^{(z)}}$ defines an \'etale morphism 
        \[U^{(z)}\rightarrow \cS. \]
        \item We have $\cF^{(z)}=\mathrm{ker}(\bar\epsilon:\Lie G^{(z)}\rightarrow \Lie G^{(z)})$.
    \end{itemize}
\end{proposition}
\begin{proof}
    This is essentially \cite[Proposition 2.5.1]{Ho1} which ultimately depends on results from \cite{Lan} or \cite{FC}.
\end{proof}

By the quasi-compactness of $\cB_\Phi$, we may choose finitely many geometric points $z$ so that the union of the images of $X^{(z)}\rightarrow \cC_\Phi^*$ as in Proposition \ref{prop:good algebraic neighborhood} covers $\cB_\Phi$. Let $\Phi$ vary over all cusp labels in $\mathrm{Cusp}([[L]])$, let $\cX$ be the disjoint union of the finitely many $X^{(z)}$'s so constructed, and let $\mathcal{U}$ be the disjoint union of the finitely many $U^{(z)}$'s. The obvious map defined by identifying the abelian scheme over $\mathcal{U}$
\[\cS\sqcup \mathcal{U}\rightarrow \cS \]
is an \'etale surjection, and realizes $\cS$ as the quotient of $\cS\sqcup \mathcal{U}$ by an \'etale equivalence relation 
\[\mathcal{R}_0\rightarrow (\cS\sqcup \mathcal{U})\times_{\Spec \Oo_{\kay}} (\cS\sqcup \mathcal{U}). \]
The normalization of $\mathcal{R}_0\rightarrow (\cS\sqcup \mathcal{X})\times_{\Spec \Oo_{\kay}} (\cS\sqcup \mathcal{X})$ defines a new stack $\mathcal{R}$ sitting in a commutative diagram 
\begin{equation}\label{eq:glue}
  \begin{tikzcd}
        \mathcal{R}_0 \arrow[r] \arrow[d] & \mathcal{R} \arrow[d,"r"] \\
        (\cS\sqcup \mathcal{U})\times_{\Spec \Oo_{\kay}} (\cS\sqcup \mathcal{U}) \arrow[r] & (\cS\sqcup \mathcal{X}) \times_{\Spec \Oo_{\kay}} (\cS\sqcup \mathcal{X}).
\end{tikzcd}  
\end{equation}
Exactly as in \cite[Proposition 6.3.3.13]{Lan}, the morphism $r$ is an \'etale equivalence relation. Let $\cS^*$ be the quotient of $\cS\sqcup \mathcal{X}$ by $r$. 
The following theorem cites results that we need from \cite[Theorem 3.7.1]{BHKRY1}.
\begin{theorem}\label{thm:toroidal compact}
 There is a canonical toroidal compactification $\mathcal{S} \hookrightarrow \mathcal{S}^*$ such that $\cS^*$ is flat over $\Oo_{\kay}$ of relative dimension $n-1$. It admits a stratification
 \[ \mathcal{S}^*=\cS \bigsqcup_{\Phi \in \mathrm{Cusp}([[L]])} \cS^*(\Phi)\]
as a disjoint union of locally closed substacks. 
\begin{enumerate}
    \item The $\mathcal{O}_{\boldsymbol{k}}$-stack $\mathcal{S}^*$ is regular.
    \item  The boundary divisor
$$
\partial \mathcal{S}^*=\bigsqcup_{\Phi \in \mathrm{Cusp}([[L]])} \mathcal{S}^*(\Phi)
$$
is a smooth divisor, flat over $\Oo_{\kay}$. 
    \item For each $\Phi\in \mathrm{Cusp}([[L]])$ the stratum $\mathcal{S}^*(\Phi)$ is closed. All components of $\mathcal{S}^*(\Phi)_{/ \mathbb{C}}$ are defined over the Hilbert class field $\boldsymbol{k}^{\text {Hilb }}$, and they are
    permuted simply transitively by $\operatorname{Gal}\left(\boldsymbol{k}^{\mathrm{Hilb}} / \boldsymbol{k}\right)$.  Moreover there is a canonical identification of $\Oo_{\kay}$-stacks
    \[\begin{tikzcd}
        \Delta_\Phi \backslash \cB_\Phi \arrow[r,"\pi_\Phi"] \arrow[d,hook] & \cS^*(\Phi) \arrow[d,hook] \\
        \Delta_\Phi \backslash \cC_\Phi^* & \cS^*
    \end{tikzcd}\]
    such that $\pi_\Phi$ is an isomorphism, and the two stacks in the bottom row become isomorphic after completion along their common closed substack in the top row. In other words, we have
    \[\Delta_\Phi \backslash (\cC_\Phi^*)_{\cB_\Phi}^\wedge \cong (\cS^*)^\wedge_{\cS^*(\Phi)}. \]
    \item The boundary divisor $\partial \cS^*$ does not intersect with the exceptional divisor $\Exc$.
\end{enumerate}
\end{theorem}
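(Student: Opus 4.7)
The plan is to invoke \cite[Theorem 3.7.1]{BHKRY1} for claims (1)--(4), since that theorem constructs $\mathcal{S}^*$ via the Faltings--Chai/Lan machinery of toroidal compactification for PEL Shimura varieties \cite{FC, Lan} and directly asserts each of those statements. In the present unitary setting the odd-discriminant assumption forces $\Sym_\Phi$ to be free of rank one with a canonical positive cone, so the admissible rational polyhedral cone decomposition is uniquely determined, and the local boundary chart at $\Phi$ has the explicit form $\Delta_\Phi \backslash (\mathcal{C}_\Phi^*)^\wedge_{\mathcal{B}_\Phi}$. From this explicit local model one reads off regularity and flatness of $\mathcal{S}^*$ over $\Oo_\kay$, smoothness of the boundary divisor, the isomorphism $\pi_\Phi\colon \Delta_\Phi \backslash \mathcal{B}_\Phi \xrightarrow{\sim} \mathcal{S}^*(\Phi)$, and the Galois action on the components of $\mathcal{S}^*(\Phi)_{/\C}$ via the reflex field. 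The Cartier property of $\mathcal{Z}^*(m)$ in (4) is also part of \cite[Theorem 3.7.1]{BHKRY1}, verified by checking locally on each boundary chart using the moduli description of $\mathcal{Z}(m)$ together with regularity of $\mathcal{S}^*$.

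The statement that requires a genuinely separate argument is (5), the disjointness $\Exc \cap \partial\mathcal{S}^* = \emptyset$. I would argue this by working \'etale-locally at a geometric point of $\partial\mathcal{S}^*$ lying over a ramified prime $p \mid D$. By the description \eqref{eq:B_Phi}, such a point corresponds to a semi-abelian degeneration whose abelian quotient $B$ has signature $(n-2, 0)$, while $A_0$ has signature $(1, 0)$, and the abelian part of $A$ arises via $\mathcal{B}_\Phi \cong A_0 \otimes_{\Oo_\kay} L_\Phi$ together with the toric data encoded by $c\colon\mathfrak{n}\to B$. The Kr\"amer condition on the Lie algebras of $A_0$ and $B$ forces $\Oo_\kay$ to act on each through the structure map and its conjugate, and in particular never uniformly through the reduction $\Oo_\kay \twoheadrightarrow \F_p$ that defines $\Exc_p$. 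Consequently $\Exc_p$ cannot meet the formal neighborhood of the boundary, and summing over $p \mid D$ gives (5).

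The main obstacle, to the extent there is one, is in making the last argument precise: the local structure of $\mathcal{S}^*$ along $\partial\mathcal{S}^*$ must be compared with the local structure of $\mathcal{M}_{(n-1,1)}$ along $\Exc_p$ as analyzed in \cite[Appendix A]{Ho2}. This comparison is essentially formal once one has Lan's local description of the boundary charts together with the Kr\"amer condition at ramified primes, and I expect no additional technical input beyond what is already in \cite{BHKRY1} and \cite{Ho2}.
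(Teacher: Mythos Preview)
Your approach for parts (1)--(4) matches the paper's exactly: the paper simply records these as citations of \cite[Theorem~3.7.1]{BHKRY1} (with ultimate dependence on \cite{FC} and \cite{Lan}) and gives no further argument. The paper likewise offers no separate proof of (5), so your additional discussion there goes beyond what the paper provides.

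However, your argument for (5) contains a genuine slip. Over a geometric point of characteristic $p\mid D$, the structure morphism $\varphi\colon \Oo_\kay \to \F_p$ \emph{is} the reduction map (since $p$ is ramified, $\Oo_\kay\otimes\F_p \cong \F_p[t]/(t^2)$ and $\varphi$ kills the uniformizer). Thus your claim that the Kr\"amer/signature condition on $A_0$ and $B$ prevents $\Oo_\kay$ from acting ``uniformly through the reduction'' is false: $\Oo_\kay$ \emph{does} act on $\Lie B$ (and on $\Lie A_0$) via reduction. What actually rules out the boundary meeting $\Exc_p$ is the \emph{toric part}: from \eqref{eq:semi abelian exact sequence} one has $\mathfrak m\otimes_\Z \F_p \hookrightarrow \Lie \mathbf G_z$, and since $\mathfrak m$ is projective of rank one over $\Oo_\kay$ this submodule is isomorphic to $\Oo_\kay/p\cong \F_p[t]/(t^2)$, on which the uniformizer acts nontrivially. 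Hence $\Oo_\kay$ does not act on $\Lie \mathbf G_z$ through $\Oo_\kay\twoheadrightarrow\F_p$, and $z\notin\Exc_p$.

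A cleaner route avoids Lie algebras entirely: by \cite[Theorem~2.3.4]{BHKRY1} (or \cite[Proposition~A.2]{Ho2}), $\Exc_p$ is a disjoint union of copies of $\bP^{n-1}_{\F_p}$, hence proper over $\F_p$. Since $\cS^*$ is separated, the monomorphism $\Exc_p\hookrightarrow \cS\hookrightarrow \cS^*$ is proper and therefore a closed immersion; as $\Exc_p\subset\cS$, it cannot meet $\partial\cS^*=\cS^*\setminus\cS$.
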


The universal object over $\cS$ extends to a semi-abelian scheme over $\cS^*$. The following is due to \cite[Theorem 2.5.2]{Ho1}.
\begin{proposition}\label{prop:extension of universal object}
The universal abelian scheme $\mathbf A$ over $\cS$ extends to a semi-abelian scheme $\mathbf G$ over $\cS^*$ with $\Oo_{\kay}$-action such that $\mathbf{G}|_{\cS}=\mathbf A$. At a geometric point $z=\Spec \mathbb{F}\in \cS^*(\Phi)$ where $\Phi=(\mathfrak{m}\subset M)\in \mathrm{Cusp}([[L]])$ , the semi-abelian scheme $\mathbf{G}_z$ is an extension 
\begin{equation}\label{eq:semi abelian exact sequence}
  0\rightarrow\mathfrak{m}\otimes_\Z \mathbb{G}_m \rightarrow \mathbf{G}_z \rightarrow B\rightarrow 0, 
\end{equation}
where $B$ is an abelian variety which sits in a triple $(B,\iota_B,\lambda_B)\in \cM_{(n-2,0)}(\mathbb{F})$. The flag of bundles $\cF_{\mathbf A}\subset \Lie \mathbf A$ over $\cS$ has a canonical extension to $\cF_{\mathbf G}\subset \Lie \mathbf G$ over $\cS^*$ which satisfies Kr\"amer's condition. On the complement of $\Exc$, we have
\begin{equation}
    \cF_{\mathbf G}=\mathrm{ker}(\bar\epsilon:\Lie {\mathbf G}\rightarrow \Lie {\mathbf G}).
\end{equation}
\end{proposition}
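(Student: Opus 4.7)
My plan is to deduce the existence of the semi-abelian prolongation $\mathbf G$ from the general theory of toroidal compactifications of PEL Shimura varieties, and then use the explicit Mumford-type structure on boundary charts to extend the Krämer flag.

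First, by Theorem \ref{thm:toroidal compact}(3), the formal completion of $\cS^*$ along each boundary stratum $\cS^*(\Phi)$ is identified with $\Delta_\Phi\backslash(\cC_\Phi^*)^\wedge_{\cB_\Phi}$. Over $\cC_\Phi\subset\cC_\Phi^*$, the pulled-back universal abelian scheme is degenerating, and Mumford's construction (Faltings--Chai, in the PEL setting Lan) produces its canonical semi-abelian prolongation to all of $\cC_\Phi^*$, fitting into the exact sequence (\ref{eq:semi abelian exact sequence}) at boundary points. The toric part $\mathfrak m\otimes_\Z\mathbb G_m$ inherits its $\Oo_{\kay}$-action from $\mathfrak m$, the pair $(A_0,B)$ comes from the universal object on $\mathcal{A}_\Phi$, and $\Oo_{\kay}$-equivariance of the extension follows from the $\Oo_{\kay}$-linearity of the Poincar\'e-bundle trivialization parametrized by $\cC_\Phi^*$. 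This construction is $\Delta_\Phi$-equivariant, so descends; gluing with $\mathbf A$ over $\cS$ yields the global semi-abelian scheme $\mathbf G$ on $\cS^*$.

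Second, to extend the Krämer flag I would analyze the induced Lie-algebra exact sequence
\[0\to \Lie T\to \Lie\mathbf G\to \Lie B\to 0,\quad T=\mathfrak m\otimes_\Z\mathbb G_m,\]
at a boundary chart. Since $(B,\iota_B,\lambda_B)$ has signature $(n-2,0)$, $\Oo_{\kay}$ acts on $\Lie B$ entirely through the structure map, so $\bar\epsilon$ vanishes on $\Lie B$; equivalently $\bar\epsilon(\Lie\mathbf G)\subset\Lie T$. A direct computation with $\Lie T=\mathfrak m\otimes_\Z\mathcal O_S$ shows that $\bar\epsilon|_{\Lie T}$ has a rank-one kernel (in both unramified and ramified characteristics, as long as one is outside $\Exc$), whence $\ker(\bar\epsilon)\subset\Lie\mathbf G$ is a rank-$(n-1)$ local direct summand on which $\Oo_{\kay}$ acts via the structure map, and thus satisfies Krämer's condition. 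Setting $\cF_{\mathbf G}=\ker(\bar\epsilon)$ on boundary charts (outside $\Exc$) and $\cF_{\mathbf A}$ on the interior produces a canonical rank-$(n-1)$ subbundle $\cF_{\mathbf G}\subset\Lie\mathbf G$ extending $\cF_{\mathbf A}$; uniqueness of the Krämer flag, together with density of $\cS$ in $\cS^*$ and the regularity of $\cS^*$ from Theorem \ref{thm:toroidal compact}(1), ensures the two definitions agree on overlaps. The identity $\cF_{\mathbf G}=\ker(\bar\epsilon)$ on $\cS^*\setminus\Exc$ is then built in: the boundary misses $\Exc$ by Theorem \ref{thm:toroidal compact}(5), so the construction is valid on the whole complement, and on $\cS\setminus\Exc$ it recovers the standard characterization of $\cF_{\mathbf A}$ from Section \ref{subsec:integral model} (compare \cite[Proposition A.2]{Ho2}).

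The main obstacle is to check that the Mumford construction descends through the $\Delta_\Phi$-quotient and glues compatibly across boundary charts, so that $(\mathbf G,\iota,\lambda,\cF_{\mathbf G})$ form honest global objects on $\cS^*$; extending the polarization and verifying the Krämer condition throughout the toroidal setting are the delicate parts. This bookkeeping is carried out in detail in \cite[Theorem 2.5.2]{Ho1}, building on \cite{FC} and \cite{Lan}, and we would simply invoke that result rather than reproduce it.
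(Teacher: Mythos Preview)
Your proposal is correct and matches the paper's approach: the paper does not give its own proof but simply attributes the result to \cite[Theorem 2.5.2]{Ho1}, which is exactly the reference you invoke at the end. Your sketch of how the semi-abelian extension and the Kr\"amer flag arise from the Mumford/Faltings--Chai/Lan machinery is more detailed than anything in the paper, but since both you and the paper ultimately defer to Howard's result there is nothing further to compare.
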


\section{Comparison of two line bundles}\label{sec:two line bundles}
The goal of this section is to compare two line bundles $\boldsymbol{\omega}$ and $\Omega$ defined respectively in \cite{BHY} (or \cite{BHKRY1}) and \cite{Ho2}, and reinterpret the constant term of the generating series of special divisors (see \eqref{eq: Z0} below) in \cite{BHKRY1}.

We define the line bundle of modular form $\boldsymbol{\omega}$ on $\cS$ following \cite[\S 2.4]{BHKRY1}. Let $\left(\mathbf{A}_0, \mathbf{A}\right)$ be the pair of universal abelian schemes over $\mathcal{S}$, let $\mathcal{F}_{\mathbf A} \subset \operatorname{Lie}(\mathbf A)$ be the universal subsheaf of Krämer's moduli problem. Recall that for any abelian scheme $A\rightarrow S$, we have the following exact sequence of locally free $\Oo_{\mathcal S}$ sheaves
\begin{equation}
0\rightarrow \Fil (A) \rightarrow H_1^{\dR} (A)\rightarrow \Lie A\rightarrow 0,    
\end{equation}
where $H_1^{\dR} (A)$ can be defined as the Lie algebra of the universal vector extension of $A$, and 
$\operatorname{Fil}(A)$ is canonically isomorphic to the $\mathcal{O}_S$-dual of $\Lie A^{\vee}$. 
The principal polarization of $\mathbf A$ induces a non-degenerate alternating form $\langle,\rangle$ on $H_1^{\dR} (\mathbf A)$, satisfying
\[ \langle \iota(a)x,y \rangle=\langle x, \iota(\bar{a})y\rangle, \forall a\in \Oo_{\kay}, x,y \in H_1^{\dR} (\mathbf A).  \]
Moreover $\Fil(\mathbf A)$ is totally isotropic with respect to $\langle,\rangle$, hence $\langle,\rangle$ induces a perfect pairing
\[\Lie \mathbf{A} \times \Fil({\mathbf A})\rightarrow \Oo_S. \]
Let
$$
\mathcal{F}_{\mathbf A}^{\perp} \subset \Fil(\mathbf A)
$$
be the orthogonal to $\mathcal{F}_{\mathbf A}$ under the pairing $\langle,\rangle$. It is a rank one $\Oo_{\mathcal{S}}$-module local direct summand on which $\mathcal{O}_{\boldsymbol{k}}$ acts through the structure morphism $\mathcal{O}_{\boldsymbol{k}} \rightarrow \mathcal{O}_{\mathcal{S}}$. Define the line bundle $\boldsymbol{\omega}$ on $\mathcal{S}$ by
$$
\boldsymbol{\omega}\coloneqq \Hom\left(\Lie\left(\mathbf A_0\right), \mathcal{F}_{\mathbf A}^{\perp}\right).
$$
Equivalently, we have 
\begin{equation}
    \boldsymbol{\omega}^{-1}=\Hom(\Fil(\mathbf{A}_0),\Lie(\mathbf A) / \mathcal{F}_{\mathbf A}).
\end{equation}

We introduce another line bundle $\Omega$ which controls the deformation theory of special divisors 
following \cite{Ho2}. The following is essentially \cite[Proposition 3.3]{Ho2}. Although \cite[Proposition 3.3]{Ho2} is about $p$-divisible groups, the same proof carries over to abelian varieties.

\begin{proposition}\label{prop:L_A} 
There are inclusions of $\mathcal{O}_{\cS}$-module local direct summands $F_{\mathbf A}^{\perp} \subset \epsilon H_1^\dR(\mathbf A) \subset H_1^\dR(\mathbf A)$. The morphism $\epsilon: H_1^\dR(\mathbf A) \rightarrow \epsilon H_1^\dR(\mathbf A)$ (see \eqref{eq:epsilon}) descends to a surjection
$$
\Lie \mathbf{A} \stackrel{\epsilon}{\rightarrow} \epsilon H_1^\dR(\mathbf A) / \mathcal{F}_{\mathbf{A}}^{\perp}
$$
whose kernel $L_{\mathbf A} \subset \Lie \mathbf{A}$ is an $\mathcal{O}_\cS$-module local direct summand of rank one. It is stable under $\mathcal{O}_{\kay}$, which acts on $\Lie \mathbf{A} / L_{\mathbf{A}}$ and $L_{\mathbf{A}}$ via $\varphi$ and $\bar{\varphi}$ respectively, where $\varphi:\OO_{\kay}\rightarrow \Oo_{\mathcal S}$ is the structure map, and $\bar\varphi$ is its Galois conjugate.
\end{proposition}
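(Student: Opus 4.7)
\emph{Plan.} I would follow the strategy of \cite[Proposition 3.3]{Ho2}, which establishes the analogous statement for Dieudonn\'e modules of $p$-divisible groups; the result transfers mechanically because $H_1^{\dR}(\mathbf A)$ is canonically built from the Dieudonn\'e modules of $\mathbf A[p^\infty]$ for varying $p$, and the Hodge filtration $\Fil(\mathbf A)$, the polarization pairing $\langle,\rangle$, the $\Oo_{\kay}$-action, and Kr\"amer's subsheaf $\mathcal{F}_{\mathbf A}$ all respect this identification. The backbone of the argument is the algebraic identity
\[
\epsilon\cdot\bar\epsilon=0\qquad\text{in }\Oo_{\kay}\otimes_\Z\Oo_{\cS},
\]
which is a direct calculation: expanding the product and invoking the minimal polynomial $\bar\pi^2-\tr(\pi)\bar\pi+\Nm(\pi)=0$ of $\bar\pi$ over $\Z$, one finds $\epsilon\bar\epsilon=1\otimes\varphi(\bar\pi^2-\tr(\pi)\bar\pi+\Nm(\pi))=0$. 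As an immediate consequence, $\bar\epsilon$ annihilates $\epsilon H_1^{\dR}(\mathbf A)$ and, symmetrically, $\epsilon$ annihilates $\bar\epsilon H_1^{\dR}(\mathbf A)$.

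Next I would determine how $\epsilon$ acts on the Hodge pieces via Kr\"amer's condition: on $\mathcal{F}_{\mathbf A}$, where $\Oo_{\kay}$ acts through $\varphi$, the operator $\epsilon$ is multiplication by $\varphi(\delta)$; on $\Lie\mathbf A/\mathcal{F}_{\mathbf A}$, where $\Oo_{\kay}$ acts through $\bar\varphi$, it vanishes. Using the pairing and the Rosati identity $\iota(\alpha)^\dagger=\iota(\bar\alpha)$, together with the fact (recorded in the excerpt) that $\Oo_{\kay}$ acts on $\mathcal{F}_{\mathbf A}^{\perp}$ through the structure map, one likewise sees that on $\mathcal{F}_{\mathbf A}^{\perp}$ the operator $\epsilon$ is multiplication by $\varphi(\delta)$ and on $\Fil(\mathbf A)/\mathcal{F}_{\mathbf A}^{\perp}$ it vanishes. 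In particular $\epsilon(\Fil(\mathbf A))\subseteq\mathcal{F}_{\mathbf A}^{\perp}$, so the surjection $H_1^{\dR}(\mathbf A)\twoheadrightarrow\epsilon H_1^{\dR}(\mathbf A)/\mathcal{F}_{\mathbf A}^{\perp}$ factors through $\Lie\mathbf A$.

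For the direct-summand structure, I would analyze the generic fiber first. There $\delta$ is invertible, and using the identity $\epsilon-\delta=\bar\epsilon$ one checks that $e=\epsilon/\delta$ and $1-e=-\bar\epsilon/\delta$ are complementary idempotents in $\Oo_{\kay}\otimes\Oo_{\cS}$; consequently $\epsilon H_1^{\dR}(\mathbf A)$ is the $\varphi$-eigenspace, of constant rank $n$, and $\mathcal{F}_{\mathbf A}^{\perp}$ sits inside as its natural $\varphi$-line. To extend this over all of $\cS$, in particular across the ramified fibers and the exceptional divisor $\Exc$ where the $\varphi/\bar\varphi$ decomposition degenerates, I would descend fpqc-locally to the explicit $\Oo_{\kay}\otimes\Oo_{\cS}$-module presentation of $H_1^{\dR}(\mathbf A)$ coming from Kr\"amer's local model; in that chart, the relation $\epsilon\bar\epsilon=0$ together with Kr\"amer's condition pins down $\epsilon H_1^{\dR}(\mathbf A)$ as a rank-$n$ local direct summand containing $\mathcal{F}_{\mathbf A}^{\perp}$. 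Granting this, $\epsilon H_1^{\dR}(\mathbf A)/\mathcal{F}_{\mathbf A}^{\perp}$ has rank $n-1$, the descended map from $\Lie\mathbf A$ is surjective by construction, and its kernel $L_{\mathbf A}$ is a rank-$1$ local direct summand. The two $\Oo_{\kay}$-actions follow formally from $\epsilon\bar\epsilon=0$: $\epsilon$ annihilates $L_{\mathbf A}$, so $\pi$ acts on $L_{\mathbf A}$ as $\varphi(\bar\pi)=\bar\varphi(\pi)$, while $\Lie\mathbf A/L_{\mathbf A}$ embeds into $\epsilon H_1^{\dR}(\mathbf A)/\mathcal{F}_{\mathbf A}^{\perp}$, on which $\bar\epsilon$ vanishes, so $\pi$ acts there through $\varphi$. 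The main obstacle is precisely the rank-and-summand claim at ramified points and along $\Exc$; this is where the explicit Kr\"amer local model of \cite{Ho2} does the real work, and the argument used there transfers to the present setting essentially verbatim.
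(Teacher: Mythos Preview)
Your proposal is correct and takes exactly the approach the paper indicates: the paper does not prove this proposition at all but simply states that it is essentially \cite[Proposition~3.3]{Ho2} and that ``the same proof carries over to abelian varieties.'' Your sketch is a faithful expansion of that reference, with the key algebraic identity $\epsilon\bar\epsilon=0$, the Kr\"amer signature conditions on $\mathcal{F}_{\mathbf A}$ and $\Lie\mathbf A/\mathcal{F}_{\mathbf A}$, and the local-model computation at ramified primes all in the right places.
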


We define the line bundle $\Omega$ on $\cS$ by
\begin{equation}
  \Omega^{-1}\coloneqq\Hom\left(\Fil(\mathbf A_0), L_{\mathbf{A}}\right) .  
\end{equation}
The goal of this section is to prove the following comparison theorem. 
\begin{theorem}\label{thm:comparision of two line bundles}
We have the following equation in  $\mathrm{Pic}_\Q(\mathcal{S}^*)$.
    \begin{equation}
        \Omega=\boldsymbol{\omega}\otimes \Oo(\Exc)^{-1},
    \end{equation}
where $\Oo(\Exc)$ is the line bundle over $\cS$ associated to the effective Cartier divisor $\Exc$ defined in Section \ref{subsec:integral model}.
\end{theorem}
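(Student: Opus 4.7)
The approach is to construct a canonical morphism $\alpha \colon \Lie \mathbf A/\mathcal{F}_{\mathbf A} \to L_{\mathbf A}$ and show its divisor of zeros is exactly $\Exc$. This identifies $\Lie \mathbf A/\mathcal{F}_{\mathbf A}$ with the subsheaf $L_{\mathbf A} \otimes \Oo(-\Exc) \subset L_{\mathbf A}$, giving $L_{\mathbf A} = (\Lie \mathbf A/\mathcal{F}_{\mathbf A}) \otimes \Oo(\Exc)$; applying $\Hom(\Fil(\mathbf A_0), -)$ then yields the theorem.

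The key algebraic input are the identities in $R = \Oo_{\kay} \otimes_\Z \Oo_{\cS^*}$,
\[\epsilon + \bar\epsilon = \delta, \qquad \epsilon \cdot \bar\epsilon = 0,\]
where $\delta = \varphi(\pi - \bar\pi)$ satisfies $\delta^2 = -D$; the second identity follows from $\bar\pi$ being a root of the minimal polynomial of $\pi$ over $\Z$. Because $\Oo_{\kay}$ acts on $\mathcal{F}_{\mathbf A}$ via $\varphi$ and $\varphi(\bar\epsilon) = 0$, the operator $\bar\epsilon$ annihilates $\mathcal{F}_{\mathbf A}$ and descends to a morphism $\Lie \mathbf A/\mathcal{F}_{\mathbf A} \to \Lie \mathbf A$; lifting $\tilde x \in \Lie \mathbf A$ to $\hat x \in H_1^\dR(\mathbf A)$ gives $\epsilon(\bar\epsilon \hat x) = 0 \in \mathcal{F}_{\mathbf A}^\perp$, so the image lies in $L_{\mathbf A}$. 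This defines $\alpha$. Let $\beta \colon L_{\mathbf A} \hookrightarrow \Lie \mathbf A \twoheadrightarrow \Lie \mathbf A/\mathcal{F}_{\mathbf A}$ denote the canonical map in the opposite direction. Since both $L_{\mathbf A}$ and $\Lie \mathbf A/\mathcal{F}_{\mathbf A}$ carry the $\bar\varphi$-action of $\Oo_{\kay}$ and $\bar\varphi(\bar\epsilon) = \delta$, both compositions $\alpha \circ \beta$ and $\beta \circ \alpha$ equal multiplication by $\delta$, whence
\[\mathrm{div}(\alpha) + \mathrm{div}(\beta) = \mathrm{div}(\delta).\]

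To identify $\mathrm{div}(\alpha) = \Exc$, note first that $\alpha$ is an isomorphism away from the ramified fibers (where $\delta$ is a unit). For $s \in \Exc$, the $\Oo_{\kay}$-action on $\Lie A_s$ factors through $\F_p$, so $\bar\pi$ and $\varphi(\bar\pi)$ act identically, forcing $\bar\epsilon = 0$ on $\Lie A_s$ and $\alpha_s = 0$; hence $\mathrm{div}(\alpha) \geq \Exc$. For a point $s$ of a ramified fiber with $s \notin \Exc$, a local splitting $\Lie \mathbf A \cong \mathcal{F}_{\mathbf A} \oplus C$ lets one compute $\alpha(\bar x) = (-T(s)\bar x, \delta(s)\bar x)$, where $T$ is the off-diagonal component of $\pi$'s action on $\Lie \mathbf A$ (the extension class of $0 \to \mathcal{F}_{\mathbf A} \to \Lie \mathbf A \to \Lie \mathbf A/\mathcal{F}_{\mathbf A} \to 0$, well-defined modulo $\delta$). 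The condition $s \notin \Exc$ is equivalent to $\sqrt{-D}$ acting nontrivially on $\Lie A_s$, which at $\delta(s) = 0$ reads as $T(s) \ne 0$, so $\alpha_s \ne 0$. Hence $\mathrm{supp}\,\mathrm{div}(\alpha) \subseteq \Exc$. A local model computation at the generic point of $\Exc$ (following \cite[Appendix A]{Ho2}) shows that $\Exc$ appears in $\mathrm{div}(\delta)$ with multiplicity exactly one; combined with $\mathrm{div}(\beta) \geq 0$, this forces $\mathrm{div}(\alpha) = \Exc$.

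The extension from $\cS$ to $\cS^*$ is automatic: the Krämer filtration $\mathcal{F}_{\mathbf G} \subset \Lie \mathbf G$ extends to the semi-abelian $\mathbf G$ over $\cS^*$ (Proposition \ref{prop:extension of universal object}), and so do $L_{\mathbf G}$ and the morphism $\alpha$; moreover $\Exc \cap \partial \cS^* = \emptyset$ by Theorem \ref{thm:toroidal compact}(5), so no additional analysis is needed near the boundary. The main obstacle is the local-model computation pinning down the multiplicity of $\Exc$ in $\mathrm{div}(\delta)$, which requires the explicit structure of the Krämer moduli near the exceptional locus from \cite{Kr, Ho2}.
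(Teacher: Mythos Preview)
Your argument is correct and takes a route genuinely dual to the paper's. The paper works with the map $s\colon L_{\mathbf A}\to \Lie\mathbf A/\mathcal F_{\mathbf A}$ (your $\beta$) and identifies its zero locus with an auxiliary divisor $\mathcal Z_p$ in each ramified fiber, defined via a symmetric form $\{\,,\,\}$ on $\epsilon H_1^{\dR}(\mathbf A_p)$; it then invokes Kr\"amer's local-model decomposition $\mathcal S_p=\mathcal Z_p+\mathrm{Exc}_p$ and the principality of $\mathcal S_p^*$ to deduce $\mathcal Z_p^*\sim -\mathrm{Exc}_p$ in $\mathrm{CH}^1$. You instead work with the opposite map $\alpha$ induced by $\bar\epsilon$ and pin down $\mathrm{div}(\alpha)=\mathrm{Exc}$ directly: the support is controlled by the very definition of $\mathrm{Exc}$ (which is equivalent to $\bar\epsilon\,\Lie A_s=0$; your local-splitting argument with $T$ is more elaborate than necessary here), and the multiplicity follows from $\alpha\circ\beta=\delta$ together with the fact that $\mathrm{Exc}_p$ is reduced in $\mathcal S_p$. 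Both arguments ultimately rest on the same local-model input, namely that the components of the ramified special fiber are reduced (stated in the paper as the Kr\"amer-type Lemma decomposing $\mathcal S_p$). Your approach buys a shorter path that avoids the auxiliary form $\{\,,\,\}$ and the intermediate divisor $\mathcal Z_p$; the paper's approach buys an explicit description of the complementary component $\mathcal Z_p^*$ (and of $\mathrm{div}(\beta)$), which is of independent interest though not strictly needed for the theorem.
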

\begin{proof}
The theorem follows directly from Corollary \ref{cor:zero locus of s} and equation \eqref{eq:cZ_p* and Exc_p} below.
\end{proof}

Assume that $p$ is an odd prime of $\Q$ ramified in $\kay$. Following \cite[Theorem 4.5, Step 2]{Kr}, we define a substack $\cZ_p$ of the special fiber $\mathcal{S}_p$ of $\mathcal{S}$ over $p$ as follows. Let $(\mathbf{A}_{0,p}, \mathbf{A}_{p})$ be the universal abelian schemes over $\mathcal{S}_p$. Set \begin{align}\label{eq:varepsilon}
    \varepsilon_S=& \delta \otimes 1- 1\otimes \varphi(\bar\delta) \in \Oo_{\kay}\otimes_\Z \Oo_S,\\
    \bar\varepsilon_S=& \bar\delta \otimes 1- 1\otimes \varphi(\bar\delta) \in \Oo_{\kay}\otimes_\Z \Oo_S.
\end{align}
Notice that $\varepsilon$ induces a morphism from $\varepsilon: H_1^{\dR}(\mathbf{A}_p)\to  H_1^{\dR}(\mathbf{A}_p)$ such that the kernel and image of $\varepsilon$ are equal to each other. This is because   $H_1^{\dR}(\mathbf{A}_p)$ is locally free over $\Oo_{\kay}\otimes \mathcal{S}_p$ and the similar property of $\varepsilon$ acting on  $\Oo_{\kay}\otimes \mathcal{S}_p$ holds (on $\mathcal{S}_p$ we have $\varepsilon=\delta \otimes 1$). Note that now we have $\varepsilon=-\bar{\varepsilon}$.
Define on $\varepsilon H_1^{\dR}(\mathbf{A}_p)$  an $\Oo_{\mathcal{S}_p}$-bilinear form $\{,\}$ by
\begin{equation}\label{eq:definition of {,}}
    \{\varepsilon x, \varepsilon y\}=\langle \varepsilon x,y\rangle.
\end{equation} 
As $\varepsilon H_1^{\dR}(\mathbf{A}_p)$ is isotropic with respect to $\langle,\rangle$, the form $\{,\}$ is well-defined.
Moreover $\{,\}$ is symmetric:
\[\{\varepsilon x, \varepsilon y\}=\langle (\pi \otimes 1 ) x,y\rangle=
-\langle y, (\pi \otimes 1 ) x\rangle=\langle (\pi \otimes 1 )y,x\rangle=\{\varepsilon y, \varepsilon x\}.
\]

\begin{definition}
Let $\cZ_p$ be the substack of $\mathcal{S}_p$ such that for any $\Oo_{\kay}/(\mathfrak p)$-scheme $S$, $\cZ_p(S)$ is the groupoid of isomorphism classes $(A_0,\iota_0,\lambda_0,A,\iota,\lambda , \mathcal{F})\in \mathcal{S}_p(S)$ such that $\mathcal{F}_{A}^\bot$ is isotropic with respect to $\{,\}$. Here $\mathfrak{p}$ is the ideal of $\mathcal{O}_{\boldsymbol{k}}$ such that $\mathfrak p^2=p\mathcal{O}_{\boldsymbol{k}}$.
\end{definition}

\begin{lemma}\label{lem:cZ_p}
    $\cZ_p$ is a regular scheme and a Cartier divisor of $\mathcal{S}$. Moreover we have the following equation of Cartier divisors
    \begin{equation}\label{eq:special fiber}
        \mathcal{S}_p=\cZ_p+\Exc_p.
    \end{equation}
\end{lemma}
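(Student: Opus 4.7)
The plan is to realize $\cZ_p$ as the vanishing locus of an explicit section of a line bundle on $\mathcal{S}_p$, and then verify both the regularity of $\cZ_p$ and the divisor identity by an \'etale-local computation using Kr\"amer's local model.

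First I would show $\cZ_p$ is a Cartier divisor. By Proposition \ref{prop:L_A}, $\mathcal{F}_{\mathbf A}^\perp$ is a rank-one $\Oo_{\mathcal S_p}$-module local direct summand of $\epsilon H_1^{\dR}(\mathbf A_p)$, and by the discussion immediately preceding the lemma, $\{,\}$ is a well-defined symmetric $\Oo_{\mathcal S_p}$-bilinear form on $\epsilon H_1^{\dR}(\mathbf A_p)$. Its restriction to $\mathcal{F}_{\mathbf A}^\perp$ gives a global section
\[
s \in \Gamma\bigl(\mathcal{S}_p,\,(\mathcal{F}_{\mathbf A}^\perp)^{\otimes -2}\bigr),
\qquad s(\alpha \otimes \beta) = \{\alpha, \beta\},
\]
whose zero locus is, by definition, $\cZ_p$. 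This already exhibits $\cZ_p$ as a Cartier divisor of $\mathcal{S}_p$, hence of $\mathcal{S}$.

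For regularity and the divisor identity I would then pass to the completed local ring $\widehat{\Oo}_{\mathcal{S},z}$ at each geometric point $z \in \mathcal{S}_p$. By the Kr\"amer local model used in \cite[Theorem 2.3.4]{BHKRY1} (cf.\ \cite[Theorem 4.5]{Kr}), this ring is formally smooth over $\Oo_{\kay,p}$ off $\Exc_p$, so there $p$ itself cuts out a regular $\cZ_p$. At a point $z \in \Exc_p$, the same local model provides regular parameters $x,y,t_1,\ldots,t_{n-2}$ such that $\Exc_p$ is cut out by $x$ and the relation $(p) = (x)(y)$ (up to a unit) is built into the presentation. A direct matrix computation of $\{,\}$ on $\mathcal{F}_{\mathbf A}^\perp$ in a Dieudonn\'e basis adapted to the eigenspaces of $\epsilon$ should then identify the section $s$ with $y$ up to a unit, so $\cZ_p$ is cut out locally by $y$ and is regular. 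The factorization $(p)=(x)(y)$ of ideal sheaves then gives the identity $\mathcal{S}_p = \cZ_p + \Exc_p$ of Cartier divisors of $\mathcal{S}$.

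The main obstacle will be the matrix computation at points of $\Exc_p$: one must pick a Dieudonn\'e basis compatible with the local model so that simultaneously $x$ cuts out $\Exc_p$, the section $s$ is identified with $y$, and $\pi$ factors as $xy$ (up to a unit). This is essentially Step~2 of the proof of \cite[Theorem 4.5]{Kr} adapted to the Kr\"amer moduli problem in the present setting, and the bookkeeping is delicate because the ramified $\Oo_{\kay}$-action degenerates on the exceptional locus, forcing a careful analysis of how the two eigenspaces of $\epsilon$ interact with the Krämer filtration.
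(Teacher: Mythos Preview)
Your approach is essentially the paper's: both reduce everything to Kr\"amer's local computation (Steps~3--4 of \cite[Theorem~4.5]{Kr}). The paper first shows set-theoretically that $\mathcal{S}_p^{\mathrm{red}}=\cZ_p^{\mathrm{red}}\cup\Exc_p$ (any geometric point outside $\cZ_p$ is forced into $\Exc_p$, via Kr\"amer's Step~3), then invokes Step~4 for regularity and the multiplicity-one decomposition.

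One correction to your write-up: the claim that the section $s\in\Gamma\bigl(\mathcal{S}_p,(\mathcal{F}_{\mathbf A}^\perp)^{\otimes -2}\bigr)$ ``already exhibits $\cZ_p$ as a Cartier divisor of $\mathcal{S}_p$, hence of $\mathcal{S}$'' does not hold. In fact $s$ vanishes identically on the component of $\mathcal{S}_p$ complementary to $\Exc_p$---this is precisely the content of the paper's first step---so $s$ is a zerodivisor on $\mathcal{S}_p$ and its zero locus has codimension~$0$ there, not~$1$. What you actually need is that $\cZ_p$ is a Cartier divisor of $\mathcal{S}$, and that only comes out of the local model computation you correctly identify as the crux. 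A minor point: in the Kr\"amer local model the factorization is $(\pi)=(x)(y)$ rather than $(p)=(x)(y)$, consistent with $\mathcal{S}_p=\mathrm{div}(\pi)$ and the reduced-fibers statement for the Kr\"amer model.
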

\begin{proof}
First we show that
\[\mathcal{S}_p^{\mathrm{red}}=\cZ_p^{\mathrm{red}}\cup \Exc_p .\]
Let $z$ be a geometric point in $\mathcal{S}_p\setminus \cZ_p$.
By the same argument of \cite[Theorem 4.5, Step 3]{Kr}, we know that both $\Fil(\mathbf{A}_z)$ and $\varepsilon H_1^{\dR}(\mathbf{A}_z)$ are equal to 
\[ \mathcal{F}_{\mathbf{A}_z}^\bot \oplus (\varepsilon^{-1} \mathcal{F}_{\mathbf{A}_z}^\bot)^\bot. \]
Here $(\varepsilon^{-1} \mathcal{F}_{\mathbf{A}_z}^\bot)^\bot$ is the perpendicular complement of $\varepsilon^{-1} \mathcal{F}_{\mathbf{A}_z}^\bot$ in $ H_1^{dR}(\mathbf{A}_z)$ with respect to $\langle,\rangle$. This implies that $\varepsilon=\pi\otimes 1$ acts trivially on $\Lie \mathbf{A}_z$, so $z$ lies in $\Exc_p$ by definition.

By the same calculation as in \cite[Theorem 4.5, Step 4]{Kr}, we can conclude the fact that $\cZ_p$ is a regular Cartier divisor and \eqref{eq:special fiber}. 
\end{proof}

\begin{proposition}\label{prop:zero locus of natural map}
The natural map $L_{\mathbf A}\rightarrow \Lie \mathbf{A}/\mathcal{F}_{\mathbf{A}}$ defines a section $s\in \Hom( \Omega^{-1}, \boldsymbol{\omega}^{-1})$. We have the following equation for the zero locus of $s$.
    \[(s)=\sum_{p\mid D} \cZ_p, \]
    where the summation is over all finite primes ramified in $\kay$.
\end{proposition}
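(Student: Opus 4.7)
The plan is to exhibit $s$ explicitly as the section induced by a natural morphism of line bundles, analyze the support of $(s)$ via the Galois-eigenspace structure, and then identify multiplicities along each ramified fiber with those of $\mathcal Z_p$ by a stratum-by-stratum computation. For setting up $s$: the composition $L_{\mathbf A} \hookrightarrow \Lie \mathbf A \twoheadrightarrow \Lie \mathbf A/\mathcal F_{\mathbf A}$ is an $\Oo_{\mathcal S}$-linear morphism between rank-one line bundles on $\mathcal S$, and tensoring with the dual of $\Fil(\mathbf A_0)$ realizes it as a section $s$ of $\boldsymbol\omega^{-1} \otimes \Omega$. By construction, $s$ vanishes at a geometric point $\bar z$ exactly when the rank-one summand $L_{\mathbf A,\bar z}$ lies inside $\mathcal F_{\mathbf A,\bar z}$.

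To pin down the support, combine the $\Oo_{\kay}$-eigenspace structures given by Proposition \ref{prop:L_A} and Kr\"amer's condition: the bundles $L_{\mathbf A}$ and $\Lie \mathbf A/\mathcal F_{\mathbf A}$ are the ``$\bar\varphi$-parts'' of $\Lie \mathbf A$, while $\mathcal F_{\mathbf A}$ is the ``$\varphi$-part.'' At every geometric point whose residue characteristic is $0$ or unramified in $\kay$, the characters $\varphi$ and $\bar\varphi$ are distinct, so the natural map $L_{\mathbf A} \to \Lie \mathbf A/\mathcal F_{\mathbf A}$ is an isomorphism between eigenline bundles and $s$ is nowhere vanishing. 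Therefore $(s)$ is set-theoretically supported in $\bigsqcup_{p\mid D}\mathcal S_p$.

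Now fix $p \mid D$ and work on $\mathcal S_p$. Since $\mathcal S_p$ is a reduced $\F_p$-scheme and $\bar\pi$ is nilpotent in $\Oo_{\kay}/(p)$, the element $\varphi(\bar\pi)$ vanishes on $\mathcal S_p$, so $\epsilon = \iota(\pi)$ and $\iota(\pi)^2 = \iota(-D) = 0$ there; Kr\"amer's condition then forces $\iota(\pi)\mathcal F_{\mathbf A} = 0$ and $\iota(\pi)\Lie \mathbf A \subseteq \mathcal F_{\mathbf A}$. On the open dense substack $\mathcal S_p \setminus \Exc_p$, which sits inside $\mathcal Z_p$ by the argument in Lemma \ref{lem:cZ_p}, the endomorphism $\iota(\pi)$ is nonzero on $\Lie \mathbf A$, so $\ker(\iota(\pi)|_{\Lie \mathbf A}) = \mathcal F_{\mathbf A}$; any $x \in L_{\mathbf A}$ has $\epsilon\tilde x \in \mathcal F_{\mathbf A}^{\perp} \subset \Fil$ for a lift $\tilde x$, forcing $\iota(\pi)x=0$ and hence $x \in \mathcal F_{\mathbf A}$, so $L_{\mathbf A}\subseteq\mathcal F_{\mathbf A}$ generically along $\mathcal Z_p$ and $s$ vanishes on $\mathcal Z_p$ by closedness. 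Conversely, on the open complement $\Exc_p \setminus \mathcal Z_p$, where $\iota(\pi)|_{\Lie \mathbf A} = 0$ and the proof of Lemma \ref{lem:cZ_p} identifies $\Fil = \epsilon H_1^{\dR} = \mathcal F_{\mathbf A}^{\perp} \oplus (\epsilon^{-1}\mathcal F_{\mathbf A}^{\perp})^{\perp}$, the failure of $\{,\}$-isotropy of $\mathcal F_{\mathbf A}^{\perp}$ translates, through the resulting description of $L_{\mathbf A}$ as $\epsilon^{-1}(\mathcal F_{\mathbf A}^{\perp})/\Fil$, into the existence of a generator of $L_{\mathbf A}$ projecting nontrivially onto $\Lie \mathbf A/\mathcal F_{\mathbf A}$, so $s$ is nonvanishing there.

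These two analyses identify $(s)^{\mathrm{red}} \cap \mathcal S_p$ with $\mathcal Z_p^{\mathrm{red}}$. To upgrade to Cartier divisors, use that $(s)|_{\mathcal S_p}$ and $\mathcal Z_p$ are both effective Cartier divisors, and that $\mathcal Z_p$ is regular by Lemma \ref{lem:cZ_p}; a local-equation calculation at a generic point of $\mathcal Z_p \setminus \Exc_p$ shows that $s$ vanishes to order exactly one along $\mathcal Z_p$, giving $(s) = \sum_{p\mid D}\mathcal Z_p$. The principal obstacle is the $\Exc_p \setminus \mathcal Z_p$ case: translating non-isotropy of $\mathcal F_{\mathbf A}^{\perp}$ with respect to $\{,\}$ into the explicit existence of a generator of $L_{\mathbf A}$ outside $\mathcal F_{\mathbf A}$ requires going through Kr\"amer's local analysis of $H_1^{\dR}(\mathbf A)$ on the exceptional locus, which is the technical heart of the argument.
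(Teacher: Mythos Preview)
Your support argument (characters $\varphi$ and $\bar\varphi$ differ away from ramified primes) matches the paper's Step~1. For the fiberwise analysis on $\mathcal S_p$, the paper takes a more unified route than your stratum-by-stratum approach: writing $\mathcal F_{\mathbf A_z}^\perp = \langle \epsilon x\rangle$ at any point $z$ of $\mathcal S_p$, one computes $L_{\mathbf A_z} = \epsilon^{-1}\mathcal F_{\mathbf A_z}^\perp/\Fil(\mathbf A_z)$ and checks directly that $L_{\mathbf A_z}\subset\mathcal F_{\mathbf A_z}$ if and only if $\langle x,\epsilon x\rangle=0$, which is exactly the $\{,\}$-isotropy condition defining $\mathcal Z_p$. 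This single computation replaces your separate treatments of $\mathcal S_p\setminus\Exc_p$ and $\Exc_p\setminus\mathcal Z_p$, and moreover extends verbatim to Artinian points, giving $(s)|_{\mathcal S_p}=\mathcal Z_p$ as closed subschemes rather than merely reduced loci.

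The genuine gap is your multiplicity claim. You assert that ``a local-equation calculation at a generic point of $\mathcal Z_p\setminus\Exc_p$ shows that $s$ vanishes to order exactly one,'' but you do not carry this out, and it is not automatic: even granting $(s)|_{\mathcal S_p}=\mathcal Z_p$, one must rule out that $(s)$ is thicker in the direction transverse to $\mathcal S_p$. The paper's Step~3 supplies precisely this. It shows $(s)$ admits no $\Oo_{\kay,p}/(\pi^2)$-point: at such a hypothetical $\tilde z$ one would have $L_{\mathbf A_{\tilde z}}\subset\mathcal F_{\mathbf A_{\tilde z}}$, but $\pi$ acts on $\mathcal F_{\mathbf A_{\tilde z}}$ via $\varphi(\pi)$ and on $L_{\mathbf A_{\tilde z}}$ via $\bar\varphi(\pi)=-\varphi(\pi)$, and these differ in $\Oo_{\kay}/(\pi^2)$ since $p\ne 2$. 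Together with the regularity of $\mathcal Z_p$ and \cite[Lemma~10.3]{RTZ}, this forces $(s)=\mathcal Z_p$ as Cartier divisors on $\mathcal S$. This short eigenvalue argument is the missing content behind your asserted calculation. Incidentally, you identify the $\Exc_p\setminus\mathcal Z_p$ case as the ``principal obstacle,'' but the paper dispatches it as a special case of the uniform $\{,\}$-isotropy criterion; the actual technical crux is the transverse multiplicity, which you leave unaddressed.
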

\begin{proof}
Step 1.
Let $z$ be a geometric point of $(s)$ with residue field $\kappa$. If the characteristic of $\kappa$ does not divide the discriminant $D$ of $\kay$, then the structural morphism $\varphi:\Oo_{\kay}\rightarrow \kappa$ and its conjugate $\bar\varphi$ are not the same. Since $\Oo_{\kay}$ acts on  $\cF_{\mathbf{A}_z}$ via $\varphi$, and on $L_{\mathbf{A}_z}$ via $\bar\varphi$, $L_{\mathbf{A}_z}\not\subset \cF_{\mathbf{A}_z}$, which means the map $s_z$ has to be nonzero, a contradiction. Hence $z$ is in $\mathcal{S}_p$ for some ramified $p$. 

Step 2. Assume $z$ is a geometric point of $\mathcal{S}_p$ for a ramified $p$. As $\cF_{\mathbf{A}_z}^\bot \subset \varepsilon H_1^{\dR}(\mathbf{A}_z)$, we can assume $\cF_{\mathbf{A}_z}^\bot$ is spanned by $\varepsilon x$ for some $x\in H_1^{\dR}(\mathbf{A}_z)$. Since the kernel and image of $\varepsilon$ acting on $H_1^{\dR}(\mathbf{A}_z)$ are equal to each other, we have   
\[\varepsilon^{-1} \cF_{\mathbf{A}_z}^\bot=\mathrm{Span}\{x\}\oplus\varepsilon H_1^{\dR}(\mathbf{A}_z).\]
The line $L_{\mathbf{A}_z}$ by definition is $\varepsilon^{-1} \cF_{\mathbf{A}_z}^\bot/ \Fil(\mathbf{A}_z)\subset \Lie \mathbf{A}_z$.  Hence
\[L_{\mathbf{A}_z}\subset \cF_{\mathbf{A}_z}\Leftrightarrow \langle x, \cF_{\mathbf{A}_z}^\bot\rangle=0 \text{ and } \langle \varepsilon H_1^{\dR}(\mathbf{A}_z), \cF_{\mathbf{A}_z}^\bot\rangle=0. \]
The last condition is automatic as $\cF_{\mathbf{A}_z}^\bot\subset \varepsilon H_1^{\dR}(\mathbf{A}_z)$ by Proposition \ref{prop:L_A} and $\varepsilon H_1^{\dR}(\mathbf{A}_z)$ is isotropic. Hence
\[L_{\mathbf{A}_z}\subset \cF_{\mathbf{A}_z}\Leftrightarrow \langle x, \cF_{\mathbf{A}_z}^\bot\rangle=0 \Leftrightarrow \langle x, \varepsilon x\rangle=0.\]
By the definition of $\{,\}$, the last condition is true if and only if $\{x,x\}=0$, i.e., $\cF_{\mathbf{A}_z}^\bot$ is isotropic with respect to $\{,\}$. Hence
\[z\in (s)\cap \mathcal{S}_p\Leftrightarrow z\in \cZ_p. \]
In fact in the above argument we can replace $z$ by any Artinian scheme over $\mathcal{S}_p$ and conclude by Nakayama's Lemma that
\[(s)|_{\mathcal{S}_p}=\cZ_p.\]

Step 3: We have shown that $(s)$ is supported on the primes ramified in $\kay$ and its special fiber over such a prime $p$ is $\cZ_p$. Since $\cZ_p$ is regular, by \cite[Lemma 10.3]{RTZ}, in order to prove the proposition it remains to show that $(s)$ has no $\Oo_{\kay,p}/(\pi^2)$-point for any ramified prime $p$. Assume $\tilde z$ is such a point. Then on one hand $\pi$ acts on $\cF_{\mathbf{A}_{\tilde z}}$ by $\varphi(\pi)$. On the other hand $\pi$ acts on $L_{\mathbf{A}_{\tilde z}}\subset \cF_{\mathbf{A}_{\tilde z}}$ by $\varphi(\bar\pi)$. Since $p\neq 2$, we know that $\pi\neq \bar\pi$ in $\Oo_{\kay}/(\pi^2)$. This is a contradiction. This finishes the proof of the proposition.
\end{proof}

\subsection{Extensions of the line bundles to the boundary}
Assume $p$ is a prime divisor of $D$.
Let $\cZ_p^*$ be the Zariski closure of $\cZ_p$ in $\cS_p^*$, then $\cZ_p^*+ \hbox{Exc}_p =\cS_p^*$  as Cartier divisors on $\cS^*$.
Since $\mathcal{S}_p^*$ is the Cartier divisor defined by an ideal $\mathfrak{p}$ such that $\mathfrak{p}^2=(p)$, we have the following equation in $\mathrm{CH}_\Q^1(\mathcal S^*)$.
\begin{equation}\label{eq:cZ_p* and Exc_p}
    \cZ_p^*+\Exc_p=0.
\end{equation}

By \cite[Theorem 3.7.1(6)]{BHKRY1}, the line bundle $\boldsymbol{\omega}$ admits a canonical extension to the compactification $\cS^*$ which is still denoted by $\boldsymbol{\omega}$ such that  
\begin{equation}
    \boldsymbol{\omega}^{-1}=\Hom(\Fil(\mathbf{A}_0),\Lie(\mathbf G) / \mathcal{F}_{\mathbf G}).
\end{equation}
Here we recall from Proposition \ref{prop:extension of universal object} that $\mathbf G$ is the extension of the universal abelian scheme $\mathbf A$ over $\cS$ to $\cS^*$. We have a similar result for $\Omega$.

\begin{lemma}\label{lem:equality of bundles outside Exc}
    We have the equality of vector bundles over $\cS\setminus \Exc$.
    \[ L_{\mathbf A}=\bar{\epsilon} \Lie {\mathbf A}.\]
\end{lemma}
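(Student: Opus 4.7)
The plan is to exploit the algebraic identity $\epsilon\bar\epsilon=0$ in $\Oo_{\kay}\otimes_\Z\Oo_S$ to produce one containment, then show both sides are rank-one local direct summands off $\Exc$. First, I would verify the identity by direct expansion in $\Oo_{\kay}\otimes_\Z\Oo_S$:
\[
\epsilon\bar\epsilon \;=\; 1\otimes\varphi\bigl(\bar\pi^2-\tr(\pi)\bar\pi+\Nm(\pi)\bigr) \;=\; 0,
\]
the vanishing coming from the characteristic polynomial of $\bar\pi$. Consequently, for any local section $v$ of $\Lie\mathbf A$ with lift $\tilde v\in H_1^{\dR}(\mathbf A)$, the image of $\bar\epsilon v$ under the descended map $\epsilon\colon\Lie\mathbf A\to \epsilon H_1^{\dR}(\mathbf A)/\cF_{\mathbf A}^\perp$ of Proposition~\ref{prop:L_A} equals $\epsilon\bar\epsilon\tilde v=0$, forcing $\bar\epsilon\Lie\mathbf A\subseteq L_{\mathbf A}$ on all of $\cS$.

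To upgrade this containment to an equality on $\cS\setminus\Exc$, it suffices to prove that $\bar\epsilon\Lie\mathbf A$ is itself a rank-one local direct summand of $\Lie\mathbf A$ there; then since $L_{\mathbf A}$ is rank-one and contains it, equality is automatic. Krämer's condition implies $\bar\epsilon$ annihilates $\cF_{\mathbf A}$ and acts on the quotient $\Lie\mathbf A/\cF_{\mathbf A}$ as multiplication by $\varphi(\delta)$, so $\bar\epsilon$ factors through the line bundle $\Lie\mathbf A/\cF_{\mathbf A}$. By Nakayama, I need only check the induced map $\Lie\mathbf A/\cF_{\mathbf A}\to L_{\mathbf A}$ is nonzero at every geometric point $\bar s$ of $\cS\setminus\Exc$. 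Choose a basis $\{e_1,\dots,e_{n-1}\}$ of $\cF_{\mathbf A_{\bar s}}$ extended to a basis $\{e_1,\dots,e_n\}$ of $\Lie\mathbf A_{\bar s}$, and write $\pi e_n=l+\varphi(\bar\pi)e_n$ with $l\in\cF_{\mathbf A_{\bar s}}$; one then computes $\bar\epsilon e_n=\varphi(\delta)e_n-l$.

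Two cases arise. If the residue characteristic $p$ is unramified in $\kay$ (including characteristic zero), $\varphi(\delta)$ is a unit, so $\bar\epsilon e_n$ already has nonzero image in $\Lie\mathbf A_{\bar s}/\cF_{\mathbf A_{\bar s}}$. If $p\mid D$ and $\bar s\notin\Exc_p$, then $\varphi(\delta)$ lies in the maximal ideal of $\Oo_{\cS,\bar s}$, and modulo $\mathfrak m_{\bar s}$ the element $\bar\epsilon e_n$ reduces to $-\bar l\in\cF_{\mathbf A_{\bar s}}\otimes\kappa(\bar s)$. The key point is that $\bar l$ is precisely the image of $\bar e_n$ under the nilpotent operator $\pi-\varphi(\pi)$ on the special fiber of $\Lie A_{\bar s}/\cF_{\mathbf A_{\bar s}}$, which is nonzero exactly when $\bar s\notin\Exc_p$ by the very definition of the exceptional locus. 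In either case $\bar\epsilon e_n$ is primitive, completing the argument.

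The main obstacle is this last identification in the ramified case: one has to translate the geometric definition of $\Exc_p$ (the locus where the $\Oo_{\kay}$-action factors through $\F_p$) into the non-vanishing of the nilpotent operator $\pi-\varphi(\pi)$ on $\Lie A_{\bar s}/\cF_{\mathbf A_{\bar s}}$, and to recognize that $\bar\epsilon$ reduces modulo the maximal ideal at a ramified geometric point to precisely $-(\pi-\varphi(\pi))$. Everything else is a rank count and Nakayama.
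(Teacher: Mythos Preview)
Your proof is correct and follows the same strategy as the paper: establish $\bar\epsilon\Lie\mathbf A\subseteq L_{\mathbf A}$ from $\epsilon\bar\epsilon=0$, then check pointwise nonvanishing of $\bar\epsilon$ off $\Exc$ and conclude by Nakayama. The paper's pointwise step is more direct and avoids your basis computation and case split: if $\bar\epsilon\Lie\mathbf A_z=0$ at a geometric point $z$, then $\bar\pi$ acts on $\Lie\mathbf A_z$ as the scalar $\varphi(\bar\pi)$, so the entire $\Oo_{\kay}$-action factors through the structure map into the residue field, which is precisely the defining condition for $z\in\Exc$.
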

\begin{proof}
    By the fact that $\epsilon \bar \epsilon=0$, we clearly have 
    \begin{equation}\label{eq:bar epsilon Lie A in L_A}
        \bar{\epsilon} \Lie {\mathbf A}\subset L_{\mathbf A}.
    \end{equation}
    Now let $z$ be any geometric closed point of $\cS\setminus \Exc$. 
    Since $L_{\mathbf A}$ has rank $1$, if the inclusion  $\bar{\epsilon} \Lie {\mathbf{A}_z}\subset L_{\mathbf{A}_z}$ is strict, we must have $\bar{\epsilon} \Lie {\mathbf{A}_z}=0$. In other words $\bar\epsilon \Lie \mathbf{A}_z=\{0\}$, which forces $z$ to be a point in $\Exc$ by the definition of $\Exc$. This is a contradiction. So we know that the lemma is true on the level of geometric closed points. Since $\bar\epsilon:\Lie \mathbf{A}\rightarrow \Lie \mathbf{A}$ is a section of the coherent sheaf $\End(\Lie \mathbf{A})$, the lemma is true by Nakayama's lemma.
\end{proof}

\begin{corollary}\label{cor:L_G on boundary}
    The line bundle $L_{\mathbf A}$ can be extended to a line bundle over $\cS^*$  denoted by $L_{\mathbf G}$ such that for a geometric point $z=\Spec \mathbb{F}\in \partial \cS^*$ on the boundary,
    \[L_{\mathbf{G}_z}=\bar{\epsilon} \Lie \mathbf{G}_z \subset \Lie \mathbf{G}_z.   \]
\end{corollary}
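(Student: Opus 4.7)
The plan is to exploit the fact, from Theorem \ref{thm:toroidal compact}(5), that $\partial\cS^*\cap \Exc=\emptyset$, so on a neighborhood of the boundary we remain in the ``away from $\Exc$'' regime in which Lemma \ref{lem:equality of bundles outside Exc} gives the clean description $L_{\mathbf A}=\bar\epsilon\Lie\mathbf A$. The strategy is therefore to \emph{define} the extension by the same formula globally on $\cS^*\setminus\Exc$, verify it is a line bundle there, and then glue with the pre-existing $L_{\mathbf A}$ on $\cS$.

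First I would form the coherent image $\bar\epsilon\Lie\mathbf G\subset\Lie\mathbf G$ on $\cS^*$, using the fact that $\bar\epsilon$ acts as an $\Oo_{\cS^*}$-linear endomorphism of $\Lie\mathbf G$ via the $\Oo_{\kay}$-action coming from Proposition \ref{prop:extension of universal object}. The key input is the second half of that proposition: on $\cS^*\setminus\Exc$ one has
\[
\cF_{\mathbf G}=\ker\bigl(\bar\epsilon\colon \Lie\mathbf G\to \Lie\mathbf G\bigr).
\]
Thus on $\cS^*\setminus\Exc$, $\bar\epsilon$ factors through an injection $\Lie\mathbf G/\cF_{\mathbf G}\hookrightarrow \Lie\mathbf G$ whose image is exactly $\bar\epsilon\Lie\mathbf G$. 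Since $\Lie\mathbf G/\cF_{\mathbf G}$ is a line bundle by Kr\"amer's condition, it follows that $\bar\epsilon\Lie\mathbf G$ is a line bundle on $\cS^*\setminus\Exc$, canonically isomorphic to $\Lie\mathbf G/\cF_{\mathbf G}$ through $\bar\epsilon$.

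Next I would compare this with $L_{\mathbf A}$ on the overlap $\cS\setminus\Exc$: by Lemma \ref{lem:equality of bundles outside Exc} we have the equality of subsheaves $L_{\mathbf A}=\bar\epsilon\Lie\mathbf A$ there, and $\Lie\mathbf G|_{\cS}=\Lie\mathbf A$ since $\mathbf G|_{\cS}=\mathbf A$. Hence $\bar\epsilon\Lie\mathbf G|_{\cS\setminus\Exc}=L_{\mathbf A}|_{\cS\setminus\Exc}$. Using the open cover $\cS^*=\cS\cup(\cS^*\setminus\Exc)$ (which is a cover because $\Exc\subset\cS$ and $\partial\cS^*\cap\Exc=\emptyset$), I can now glue the line bundle $L_{\mathbf A}$ on $\cS$ with $\bar\epsilon\Lie\mathbf G$ on $\cS^*\setminus\Exc$ along their common restriction to $\cS\setminus\Exc$. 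Call the resulting line bundle $L_{\mathbf G}$. For a geometric boundary point $z\in\partial\cS^*$ one has $z\in\cS^*\setminus\Exc$, so by construction $L_{\mathbf{G}_z}=\bar\epsilon\Lie\mathbf{G}_z\subset\Lie\mathbf{G}_z$, which is the desired description.

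I do not anticipate a serious obstacle here; the content is really bookkeeping. The only point that needs care is checking that $\bar\epsilon\Lie\mathbf G$ is \emph{locally free} of rank one on $\cS^*\setminus\Exc$ rather than merely coherent with one-dimensional fibers. This is handled, as above, by realizing it as the image of the injection $\Lie\mathbf G/\cF_{\mathbf G}\hookrightarrow \Lie\mathbf G$ induced by $\bar\epsilon$, with injectivity coming precisely from $\ker\bar\epsilon=\cF_{\mathbf G}$ on $\cS^*\setminus\Exc$, and then the gluing is automatic since the transition identification on $\cS\setminus\Exc$ is the tautological one.
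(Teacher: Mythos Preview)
Your proof is correct and follows the same strategy as the paper: set $L_{\mathbf G}=\bar\epsilon\Lie\mathbf G$ on $\cS^*\setminus\Exc$ and glue with $L_{\mathbf A}$ over $\cS$ via Lemma~\ref{lem:equality of bundles outside Exc}. Your local-freeness check (using $\ker\bar\epsilon=\cF_{\mathbf G}$ from Proposition~\ref{prop:extension of universal object} to identify $\bar\epsilon\Lie\mathbf G\cong\Lie\mathbf G/\cF_{\mathbf G}$ globally) is in fact a bit slicker than the paper's, which instead verifies that the fiber of $\bar\epsilon\Lie\mathbf G$ is one-dimensional at each geometric boundary point via the exact sequence~\eqref{eq:semi abelian exact sequence} (noting $\bar\epsilon\Lie B=0$ since $B\in\cM_{(n-2,0)}$) and then appeals to Nakayama.
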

\begin{proof}
We first claim that $\bar\epsilon \Lie {\mathbf G}$ is a line bundle over $\cS^*\setminus\Exc$. By Nakayama's lemma, it suffices to check all geometric closed points. For points in $\cS\setminus\Exc$, the claim follows from Lemma \ref{lem:equality of bundles outside Exc}. For a geometric point $z$ on $\cB_\Phi$, we have the exact sequence of Lie algebras
\[0\rightarrow\mathfrak{m}\otimes_\Z \Lie \mathbb{G}_m \rightarrow \Lie \mathbf{G}_z \rightarrow \Lie B\rightarrow 0,\]
from the exact sequence \eqref{eq:semi abelian exact sequence}. Now since $(B,\iota_B,\lambda_B)\in \cM_{(n-2,0)}(\mathbb{F})$, we have $\bar\epsilon \Lie B=0$. So $\bar{\epsilon} \Lie \mathbf{G}_z=\bar{\epsilon} \Lie(\mathfrak{m}\otimes_\Z \mathbb{G}_m)$ is of dimension $1$.

Now by Lemma \ref{lem:equality of bundles outside Exc} and the (5) of Theorem \ref{thm:toroidal compact}, over $\cS^*\setminus \Exc$, we can define $L_{\mathbf G}$ to be $\bar{\epsilon} \Lie \mathbf{G}$. The corollary follows.
\end{proof}

As a consequence, $\Omega$ can be canonically extended to a line bundle over $\cS^*$ which is defined by 
\begin{equation}
  \Omega^{-1}\coloneqq\Hom\left(\Fil(\mathbf A_0), L_{\mathbf{G}}\right) .  
\end{equation}
The natural map $L_{\mathbf A}\rightarrow \Lie \mathbf{A}/\mathcal{F}_{\mathbf{A}}$ also has a canonical extension to $L_{\mathbf G}\rightarrow \Lie \mathbf{G}/\mathcal{F}_{\mathbf{G}}$ which defines a section in $\Hom( \Omega^{-1}, \boldsymbol{\omega}^{-1})$ still denoted by $s$. 
\begin{corollary}\label{cor:zero locus of s}
We have the following equation on the zero locus of $s$ on $\cS^*$.
    \[(s)=\sum_{p} \cZ^*_p, \]
    where the summation is over all finite primes ramified in $\kay$.
\end{corollary}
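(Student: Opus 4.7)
The plan is to upgrade Proposition \ref{prop:zero locus of natural map}, which computes the zero divisor of $s$ on the open part $\cS$, to the toroidal compactification $\cS^*$ by showing that no boundary stratum contributes.

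I will first form the Cartier divisor $E \coloneqq (s) - \sum_{p \mid D} \cZ_p^*$ on $\cS^*$; both summands are Cartier by Theorem \ref{thm:toroidal compact}(4) and because $s$ is a section of the line bundle $\Omega\otimes\boldsymbol{\omega}^{-1}$. Proposition \ref{prop:zero locus of natural map} gives $E|_{\cS}=0$, so the support of $E$ lies in the smooth boundary divisor $\partial\cS^* = \bigsqcup_\Phi \cS^*(\Phi)$. Since $\cS^*$ is regular (Theorem \ref{thm:toroidal compact}(1)), $E$ must be an integer linear combination $E = \sum_{\Phi} m_\Phi \, \cS^*(\Phi)$ of boundary strata (extending to irreducible components if any $\cS^*(\Phi)$ is reducible). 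My goal is to show that every $m_\Phi = 0$.

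To do so, I evaluate at the generic point $\eta_\Phi$ of each $\cS^*(\Phi)$. Flatness of $\cS^*(\Phi)$ over $\Oo_{\kay}$ (Theorem \ref{thm:toroidal compact}(2)) places $\eta_\Phi$ in characteristic $0$, so $\eta_\Phi$ avoids every $\cZ_p^*$, and $m_\Phi$ equals the order of vanishing of the section $s$ at $\eta_\Phi$. Thus it suffices to show that $s_{\eta_\Phi}$ is an isomorphism of line bundles. By Theorem \ref{thm:toroidal compact}(5), $\eta_\Phi\notin\Exc$, so near $\eta_\Phi$ Proposition \ref{prop:extension of universal object} and Corollary \ref{cor:L_G on boundary} supply $\cF_{\mathbf G}=\ker(\bar\epsilon)$ and $L_{\mathbf G}=\bar\epsilon\,\Lie\mathbf G$. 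The stalk $s_{\eta_\Phi}$ is then the composite
\[
\bar\epsilon\,\Lie\mathbf G \hookrightarrow \Lie\mathbf G \twoheadrightarrow \Lie\mathbf G/\ker(\bar\epsilon),
\]
which is an isomorphism of line bundles precisely when $\bar\epsilon^2\neq 0$ on $\Lie\mathbf G_{\eta_\Phi}$.

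I would verify this last assertion by a direct calculation on the semi-abelian degeneration. The sequence $0\to \Lie(\mathfrak m\otimes_\Z \mathbb{G}_m)\to \Lie\mathbf G_{\eta_\Phi}\to \Lie B\to 0$ furnished by Proposition \ref{prop:extension of universal object} shows that $\bar\epsilon$ annihilates $\Lie B$ (since $B\in\cM_{(n-2,0)}$ has $\Oo_{\kay}$ acting through $\varphi$). On the toric part, because $\eta_\Phi$ lies in characteristic $0$ and $\kay\subset\kappa(\eta_\Phi)$, the algebra $\Oo_{\kay}\otimes_\Z \kappa(\eta_\Phi)$ is étale and splits as $\kappa(\eta_\Phi)\times\kappa(\eta_\Phi)$, with $\bar\epsilon$ acting as $0$ on the $\varphi$-factor and as the unit $2\varphi(\pi)$ on the $\bar\varphi$-factor. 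Hence $\bar\epsilon^2\neq 0$ on $\Lie(\mathfrak m\otimes_\Z\mathbb{G}_m)$, giving $m_\Phi=0$ and $E=0$.

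The main obstacle is the final semi-abelian computation: one has to track how $\bar\epsilon$ interacts with the mixed toric--abelian structure of $\Lie\mathbf G_{\eta_\Phi}$ and use characteristic $0$ to preclude the nilpotence of $\bar\epsilon$ that genuinely occurs at geometric points over ramified primes (and which is precisely what produces the divisors $\cZ_p$ in Proposition \ref{prop:zero locus of natural map}). Everything else is a formal consequence of the regularity of $\cS^*$, the closure construction, and the already-established open case.
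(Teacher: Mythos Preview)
Your proof is correct and structurally identical to the paper's: both reduce to checking that $s$ is invertible at boundary generic points (which lie in characteristic $0$ by flatness), and then verify this. For that verification the paper simply notes that Step~1 of Proposition~\ref{prop:zero locus of natural map} (the observation that $\Oo_{\kay}$ acts on $\cF_{\mathbf G}$ via $\varphi$ and on $L_{\mathbf G}$ via $\bar\varphi$, which differ whenever the residue characteristic does not divide $D$) extends verbatim to $\mathbf G$, sidestepping your explicit $\bar\epsilon^2$ computation on the semi-abelian Lie algebra.
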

\begin{proof}
The flatness of the boundary divisor $\partial \cS^*$ implies that
every irreducible component of $\partial \cS^*$ has some closed point of characteristic prime
to $D$, and Step 1 of the proof of Proposition  \ref{prop:zero locus of natural map} implies that such a point cannot lie in $(s)$. Hence none of the boundary component belongs to $(s)$. Since both sides of the equation are Cartier divisors, the corollary follows from Proposition \ref{prop:zero locus of natural map}.
\end{proof}

\subsection{Constant term of the generating series}\label{subsec:constant term}
For $m\leq 0$, using the correspondence between line bundles and Cartier divisors, we define the the class $\cZ^*(m)\in \mathrm{Pic}(\cS^*)\cong \mathrm{CH}^1(\cS^*)$  by
\begin{equation}\label{eq: Z0}
    \cZ^*(m)\coloneqq 
    \begin{cases}
    \Omega^{-1}&\text{if } m=0,\\ 
    \Oo_{\cS} &\text{if } m<0.
    \end{cases}
\end{equation}
 By Theorem \ref{thm:comparision of two line bundles}, the definition of $\cZ^*(0)$ (the constant term in the generating series of special divisors)
agrees with that of \cite[Equation (1.1.4)]{BHKRY1}. We will see in the next section that as $\Omega$ controls the deformation of special divisors, this definition of the constant term is also compatible with pullbacks, see \S \ref{sec:pullback I}.

\section{Deformation of special divisors near the boundary}\label{sec:deformation of special divisors}
It is discovered by \cite{Ho2} that the line bundle $\Omega$ controls the deformation theory of special divisors on the open Shimura variety $\cS$, see Proposition \ref{prop:deformation of Z x interior} below. The main goal of this section is to extend this observation to $\cS^*$ (Proposition \ref{prop:deformation cS star}). In order to do this we define $\cZ^*(m)$ as a Deligne-Mumford stack (see Definition \ref{def: cZ*} and Proposition \ref{prop: Z*m representability}), which should be thought of as the ``toroidal compactification" of the stack $\cZ(m)$. By  Proposition \ref{prop:compare Z* and Z*Phi} and \cite[Theorem 3.7.1(4)]{BHKRY1}, our definition of $\cZ^*(m)$ agrees with that of \cite{BHKRY1} which is the Zariski closure of $\cZ(m)$.

First we review some deformation theory following \cite[\S 4]{Ho2}. 
Throughout this section we will repeatedly encounter the following setting. Let $S$ be a stack and $Z$ be another stack with a finite unramified morphism $Z\rightarrow S$. So there is an \'etale morphism $U\rightarrow S$ such that $U$ is a scheme and $Z|_U\rightarrow S|_U$ is a closed immersion of schemes on every connected component of its domain. Let $\cZ$ be such a connected component and $I_{\cZ}$ be its ideal sheaf in $S|_U$. Then $I_{\cZ}^2$ defines a larger closed subscheme 
\begin{align*}
    \cZ\hookrightarrow \tilde{\cZ} \hookrightarrow S|_U.
\end{align*}
The image of $I_{\cZ}$ in the $\Oo_{\tilde \cZ}$ can be equipped with the trivial P.D. structure. The stack $\tilde{\cZ}$ is called the first order thickening of $\cZ$ in $S|_U$.

Now we specialize to the case $S=\cS$ and $Z=\cZ(m)$.
By \cite[Proposition 2.9]{KR2}, for any geometric point $z \in \cS$, there is an \'etale neighborhood $U$ of $z$ in $\cS$ such that $\cZ(m)|_U\rightarrow \cS|_U$ is a closed immersion of schemes on every connected component of its domain. 
Let $\cZ$ be such a connected component as in the previous paragraph.
The universal morphism $x:\mathbf{A}_0|_{\cZ}\rightarrow \mathbf{A}|_{\cZ}$ induces a morphism of vector bundles over $\cZ$.
\[H_1^\dR(\mathbf A_0|_{\cZ})\xrightarrow{x} H_1^\dR(\mathbf A|_{\cZ}), \]
which maps $\Fil(\mathbf A_0|_{\cZ})$ to $\Fil(\mathbf A|_{\cZ})$. By Grothendieck-Messing theory (\cite{Messing}), this morphism admits a canonical extension to a morphism between vector bundles over $\tilde{\cZ}$,
\[H_1^\dR(\mathbf A_0|_{\tilde{\cZ}})\xrightarrow{\tilde{x}} H_1^\dR(\mathbf A|_{\tilde{\cZ}}), \]
which determines a morphism (still denoted by $\tilde{x}$)
\begin{equation}\label{eq:obstruction map Lie A}
    \Fil(\mathbf A_0|_{\tilde{\cZ}})\xrightarrow{\tilde{x}} \Lie(\mathbf A|_{\tilde{\cZ}}).
\end{equation}
The following is an analogue of \cite[Proposition 4.1]{Ho2}.

\begin{proposition}\label{prop:deformation of Z x interior}
The morphism \eqref{eq:obstruction map Lie A} takes values in the rank one local direct summand
\[
 L_{\mathbf{A}} |_{\tilde{\cZ}} \subset \Lie \mathbf{A}|_{\tilde{\cZ}},
\]
and so can be viewed as a morphism of line bundles
\begin{equation}\label{eq:obstruction map L_A}
    \operatorname{Fil}\left(\mathbf{A}_0\right)|_{\tilde{\cZ}} \stackrel{\tilde{x}}{\rightarrow} L_{\mathbf{A}} |_{\tilde{\cZ}}.
\end{equation}
The Kudla-Rapoport divisor $\cZ$ is the largest closed subscheme of $\tilde{\cZ}$ over which \eqref{eq:obstruction map L_A} is trivial.
\end{proposition}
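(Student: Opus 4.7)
The plan is to adapt the argument of Howard \cite[Proposition 4.1]{Ho2} from $p$-divisible groups to abelian schemes, the main tool being Grothendieck--Messing theory applied to the trivial PD thickening $\cZ \hookrightarrow \tilde\cZ$. This extends the $\Oo_\kay$-linear homomorphism $x: \mathbf A_0|_\cZ \to \mathbf A|_\cZ$ canonically to an $\Oo_\kay$-linear morphism $\tilde x_*: H_1^\dR(\mathbf A_0)|_{\tilde \cZ} \to H_1^\dR(\mathbf A)|_{\tilde \cZ}$, from which the map $\tilde x$ of \eqref{eq:obstruction map Lie A} is recovered by restriction to $\Fil(\mathbf A_0)$ and projection to $\Lie \mathbf A$.

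For the factorization through $L_\mathbf A$, I would observe that since $\mathbf A_0 \in \cM_{(1,0)}$, Kr\"amer's condition makes $\Oo_\kay$ act on $\Lie \mathbf A_0$ via $\varphi$; the principal polarization then transports this to action via $\bar\varphi$ on $\Lie \mathbf A_0^\vee$, and hence on its $\Oo_S$-dual $\Fil(\mathbf A_0)$. Consequently $\epsilon$ annihilates $\Fil(\mathbf A_0)|_{\tilde\cZ}$, and the $\Oo_\kay$-linearity of $\tilde x_*$ yields
\[
\epsilon\, \tilde x_*(v) \;=\; \tilde x_*(\epsilon v) \;=\; 0 \quad \text{in } H_1^\dR(\mathbf A)|_{\tilde\cZ}
\]
for every local section $v$ of $\Fil(\mathbf A_0)|_{\tilde\cZ}$. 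By the definition of $L_\mathbf A$ in Proposition \ref{prop:L_A} as the kernel of $\epsilon: \Lie\mathbf A \to \epsilon H_1^\dR(\mathbf A)/\cF_\mathbf A^\perp$, this forces $\tilde x(v) \in L_\mathbf A|_{\tilde\cZ}$, giving the desired factorization \eqref{eq:obstruction map L_A}.

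For the second assertion, I would use Grothendieck--Messing in the converse direction: over any closed subscheme $\cZ' \subset \tilde\cZ$ on which $\tilde x$ vanishes, the lift $\tilde x_*$ respects the Hodge filtration and hence descends to an $\Oo_\kay$-linear homomorphism of abelian schemes $\mathbf A_0|_{\cZ'} \to \mathbf A|_{\cZ'}$. Combined with the other universal data, this defines a $\cZ'$-point of $\cZ(m)$ over $\cS|_U$. Since $\cZ' \subset \tilde\cZ$ lies infinitesimally close to $\cZ$, the induced morphism $\cZ' \to \cZ(m)|_U$ factors through the connected component $\cZ$, and thus the closed immersion $\cZ' \hookrightarrow \cS|_U$ factors through $\cZ \hookrightarrow \cS|_U$. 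Combined with the reverse inclusion $\cZ \subseteq V(\tilde x)$, which is automatic because on $\cZ$ the morphism $x$ is an honest morphism of abelian schemes (so $\tilde x_* = x_*$ preserves the filtration), this gives $V(\tilde x) = \cZ$.

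The main technical obstacle is verifying the $\Oo_\kay$-linearity of the Grothendieck--Messing lift $\tilde x_*$: this should follow from the naturality of the crystal $H_1^\dR(-)$ with respect to endomorphisms (each $\alpha \in \Oo_\kay$ acts on $\mathbf A_0$ and on $\mathbf A$, and these endomorphisms extend uniquely to $\tilde\cZ$ and commute with the unique lift of $x$ on crystals), but it is crucial for the cancellation $\epsilon\,\tilde x_*(v) = \tilde x_*(\epsilon v) = 0$. A secondary point of care is that one must identify $L_\mathbf A$ through its definition in Proposition \ref{prop:L_A} rather than as $\ker(\epsilon:\Lie\mathbf A\to\Lie\mathbf A)$: the latter fails to have constant rank at ramified primes of $\kay$, while the former is a line bundle on all of $\cS$ and thus yields a morphism of line bundles whose vanishing locus is well-defined.
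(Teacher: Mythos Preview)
Your proposal is correct and follows essentially the same approach as the paper, which simply states that the proof is identical to that of \cite[Proposition 4.1]{Ho2} upon replacing $p$-divisible groups by abelian schemes. Your justification that $\epsilon$ annihilates $\Fil(\mathbf A_0)$ is the key computation, and it matches the argument the paper later makes explicit in the boundary analogue (Proposition~\ref{prop:deformation C Phi star}), where the identity $\Fil(\mathbf A_0)=\bar\epsilon H_1^\dR(\mathbf A_0)$ is used together with $\epsilon\bar\epsilon=0$.
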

\begin{proof}
The proof is identical to that of \cite[Proposition 4.1]{Ho2} if we replace $p$-divisible groups by abelian schemes.   
\end{proof}

\subsection{Special divisors on formal boundary charts}\label{subsec:special divisors in boundary chart}
We would like to study special divisors on the boundary chart $\cC_\Phi^*$.  Let  $\Phi=(\mathfrak{n},L_\Phi)$ and $\mathfrak{m}$ be as  in \S \ref{subsec:toroidal compactification}. We are in the setting of \S\ref{subsec:degenerating abelian schemes} with 
\[(X,Z,U)=(\cC_\Phi^*,\cB_\Phi,\cC_\Phi).\]
Recall from \S\ref{subsec:degenerating abelian schemes} and \S\ref{subsec:boundary chart} that there is tautological degeneration data $(A_0,\iota_0,\lambda_0, B, \kappa, \psi, c, c^\vee, \tau)$ of type $\Phi$ and signature $(n-1,1)$ over $(\cC_\Phi^*,\cB_\Phi,\cC_\Phi)$, and using this we can construct semi-abelian schemes $G^\sharp$ and $(G^\sharp)^\vee$ and $1$-motives $M$ over $M^\vee$ over $\cC_\Phi^*$. In fact, $G^\sharp$ and $(G^\sharp)^\vee$ are defined over $\cB_\Phi$ and can be viewed as objects over $\cC_\Phi^*$ through the canonical projection $\cC_\Phi^*\rightarrow \cB_\Phi$.

Suppose we have a morphism of group schemes $x:A_0\rightarrow G^\sharp$ over $\cB_\Phi$. Its dual morphism $x^\vee:(G^\sharp)^\vee\rightarrow A_0^\vee$ is defined as follows. By the definition of the dual $1$-motive $M^\vee$, we have
\[ (G^\sharp)^\vee= \underline{\mathrm{Ext}}^1([\underline{\mathfrak n}\xrightarrow{c^\vee}B],\bG_m).\]
For any $\cB_\Phi$-scheme $S$, an $S$-point $\xi$ of $(G^\sharp)^\vee$ is a commutative diagram of group schemes over $S$
\[   \begin{tikzcd}
 &0  \arrow[r] \arrow[d]
& \mathfrak n \arrow[d] \arrow[r] & \mathfrak{n} \arrow[r] \arrow[d,"c^\vee"] & 0  \\
  0 \arrow[r] &\bG_m \arrow[r] 
& \tilde{B} \arrow[r] & B \arrow[r] & 0
\end{tikzcd}
\]
where the rows are exact. Then $x^\vee(\xi)\in A_0^\vee(S)=\underline{\mathrm{Ext}}^1(A_0,\bG_m)(S)$ is the extension  
$\tilde{A_0}\rightarrow A_0$ which pulls back the extension $\tilde B\rightarrow B$ along $\pi\circ x$. Namely we have the Cartesion diagram 
\[\begin{tikzcd}
    \tilde{A}_0 \arrow[r] \arrow[d] \arrow[dr, phantom, "\square"] & A_0 \arrow[d,"\pi\circ x"]\\
    \tilde{B} \arrow[r] & B.
\end{tikzcd}
\]
If we view $B^\vee$ as $\underline{\mathrm{Ext}}^1(B,\bG_m)$, we immediately see that 
\begin{equation}
    x^\vee=(\pi\circ x)^\vee\circ \pi'    
\end{equation}
where $\pi$ and $\pi'$ are defined in \eqref{eq:pi} and \eqref{eq:pi-prime}.
Now we are ready to define special divisors on $\cB_\Phi$, $\cC_\Phi$ and $\cC_\Phi^*$. For any morphism of $\Oo_{\kay}$-schemes $S\rightarrow \cB_\Phi$, we define a hermitian form on $\Hom_{\Oo_{\kay}}(A_{0,S}\ , G^\sharp_S)$ in the same way as in 
\eqref{eq:hermitian form on Hom}:
\begin{equation}
        \left( x_1, x_2\right)=\iota_0^{-1}(\lambda_0^{-1} \circ x_2^\vee \circ \lambda^\sharp \circ x_1) \in \Oo_{\kay}
\end{equation}

\begin{lemma}\label{lem:decsending lambda sharp}
For any   $x_1,x_2\in \Hom_{\Oo_{\kay}}(A_{0,S}\ , G^\sharp_S)$, we have
\[\left( x_1, x_2\right)=\left( \pi\circ x_1, \pi\circ x_2\right)_\psi, \]
where $\pi:G^\sharp \rightarrow B$ is defined in \eqref{eq:pi} and $(,)_\psi$ is the hermitian form on $\Hom_{\Oo_{\kay}}(A_{0,S}\ , B_S)$ defined in the same way as \eqref{eq:hermitian form on Hom} using the polarization $\psi:B\rightarrow B^\vee$. In particular $\left(,\right)$ is positive definite.
\end{lemma}
\begin{proof}
We have
\begin{align*}
    \left( x_1, x_2\right)=&\iota_0^{-1}(\lambda_0^{-1} \circ x_2^\vee \circ \lambda^\sharp \circ x_1)\\
    =&\iota_0^{-1}(\lambda_0^{-1} \circ (\pi\circ x_2)^\vee\circ \pi' \circ \lambda^\sharp \circ x_1)\\
    =&\iota_0^{-1}(\lambda_0^{-1} \circ (\pi\circ x_2)^\vee\circ \psi\circ \pi \circ x_1)\\
    =&\left( \pi\circ x_1, \pi\circ x_2\right)_\psi.
\end{align*}
The third equality above uses the compatibility of $\psi$ and $\lambda^\sharp$: $\pi'\circ \lambda^\sharp=\psi\circ \pi$ by the definition of polarization of $1$-motives.

If $\pi\circ x_1=0$, then $x_1$ factors through $\mathfrak{m}\otimes\mathbb{G}_m$ which is affine. This forces $x_1$ to be $0$.
The positivity of $\left(,\right)$ now follows from that of $\left(,\right)_\psi$ by \cite[Lemma 2.7]{KR2}.
\end{proof}

The following definition is due to \cite[Remark 3.6.3]{BHKRY1}.
\begin{definition}
For $m\in \Z_{> 0}$, define $\cZ_{\cB_\Phi}(m)$ to be the stack assigning for any morphism of $\Oo_{\kay}$-schemes $S\rightarrow \cB_\Phi$, the set
\[\cZ_{\cB_\Phi}(m)(S)=\{x\in \Hom_{\Oo_{\kay}}(A_{0,S}\ , G^\sharp_S) \mid ( x,x)=m\}.\]
   We also  define 
\begin{equation}\label{eq:factor through B_Phi}
   \cZ_\Phi(m)=\cZ_{\cB_\Phi}(m)\times_{\cB_\Phi} \cC_\Phi,\  \cZ_\Phi^*(m)=\cZ_{\cB_\Phi}(m)\times_{\cB_\Phi} \cC_\Phi^*. 
\end{equation}
\end{definition} 

By \cite[Proposition 3.6.2]{BHKRY1}, the natural morphism $\cZ_{\cB_\Phi}(m)\rightarrow \cB_\Phi$ is finite and unramified as $\cZ_{\cB_\Phi}(m)$ is a finite disjoint union of stacks, each of which maps to $\cB_\Phi$ as a Cartier divisor. We can view $\cZ_{\cB_\Phi}(m)$ (or rather its image under the morphism to $\cB_\Phi$) as an effective Cartier divisor of $\cB_\Phi$ that is flat over $\mathcal{A}_\Phi$ and $\Spec\Oo_{\kay}$. As $\cC_\Phi$ and $\cC_\Phi^*$ are smooth over $\cB_\Phi$, $\cZ_\Phi(m)$ and $\cZ_\Phi^*(m)$ are smooth over $\cZ_{\cB_\Phi}(m)$, and are effective Cartier divisors of $\cC_\Phi$ and $\cC_\Phi^*$ respectively, flat over $\mathcal{A}_\Phi$ and $\Spec\Oo_{\kay}$.

\subsection{Deformation of special divisors on formal boundary charts}
Now consider $G^\sharp$ as the $1$-motive $[0\rightarrow G^\sharp]$ over $\cB_\Phi$. Let $H_1^\dR(G^\sharp)$ be the de Rham realization of $G^\sharp$ (see \cite[\S 10.1.7]{deligne1974}) defined by the Lie algebra of its universal vector extension, which is a rank $2n-2$ vector bundle over $\cB_\Phi$, locally free of rank $n-1$ over $\Oo_{\kay}\otimes_\Z  \OO_{\cB_\Phi}$.  Then $\Fil(G^\sharp)=F^0 H_1^\dR(G^\sharp)$ is a locally free direct summand of  $H_1^\dR(G^\sharp)$ of $\Oo_{\cB_\Phi}$-rank $n-2$. We have the following exact sequence of Lie algebras
\begin{equation}\label{eq:exact sequence Lie G sharp}
    0\rightarrow\mathfrak{m}\otimes_\Z \Oo_{\cB_\Phi} \rightarrow \Lie G^\sharp \rightarrow \Lie B\rightarrow 0.
\end{equation}
Define
\begin{equation}\label{eq:LGsharp}
  L_{G^\sharp}=\bar\epsilon \Lie G^\sharp.
\end{equation} 
Then by the exact sequence \eqref{eq:exact sequence Lie G sharp} and an argument similar to that of \cite[Lemma 2.3.6]{Ho1}, we can easily see that $L_{G^\sharp}$ is an $\Oo_{\cB_\Phi}$-module local direct summand of rank one. It is stable under $\mathcal{O}_{\kay}$, which acts on $\Lie G^\sharp / L_{G^\sharp}$ and $L_{G^\sharp}$ via $\varphi, \bar{\varphi}$ : $\mathcal{O}_{\kay} \rightarrow \mathcal{O}_{\cB_\Phi}$, respectively.  Pulling back to $\cC_\Phi^*$, we get vector bundles $H_1^\dR(G^\sharp)$, $\Fil(G^\sharp)$, $\Lie G^\sharp$, and $L_{G^\sharp}$ over $\cC_\Phi^*$ denoted by the same notation.

We now revisit the setting at the beginning of this section with $S=\cC_\Phi^*$ and $Z=\cZ_\Phi^*(m)$.
For any geometric point $z \in \cC_\Phi$,  there is an \'etale neighborhood $U$ of $z$ in $\cC_\Phi^*$ such that $\cZ_\Phi^*(m)|_U\rightarrow \cC_\Phi^*|_U$ is a closed immersion of schemes on every connected component of its domain. Let $\cZ$ be such a connected component.
The universal morphism $x:A_0|_{\cZ}\rightarrow G^\sharp|_{\cZ}$ induces a morphism of vector bundles over $\cZ$.
\[H_1^\dR(A_0|_{\cZ})\xrightarrow{x} H_1^\dR(G^\sharp|_{\cZ}), \]
which maps $\Fil(A_0|_{\cZ})$ to $\Fil(G^\sharp|_{\cZ})$.
By Grothendieck-Messing theory for $1$-motives (see for example \cite{AB11}), this morphism admits a canonical extension to a morphism between vector bundles over $\tilde{\cZ}$,
\[H_1^\dR(A_0|_{\tilde{\cZ}})\xrightarrow{\tilde{x}} H_1^\dR(G^\sharp|_{\tilde{\cZ}}), \]
which determines a homorphism
\begin{equation}\label{eq:obstruction map Lie G sharp}
    \Fil(A_0|_{\tilde{\cZ}})\xrightarrow{\tilde{x}} \Lie(G^\sharp|_{\tilde{\cZ}}).
\end{equation}
The following is an analogue of Proposition \ref{prop:deformation of Z x interior}.
\begin{proposition}\label{prop:deformation C Phi star}
The homorphism \eqref{eq:obstruction map Lie G sharp} takes values in the rank one local direct summand
\[
 L_{G^\sharp} |_{\tilde{\cZ}} \subset \Lie G^\sharp|_{\tilde{\cZ}},
\]
  and so can be viewed as a morphism of line bundles
\begin{equation}\label{eq:obstruction map L_G sharp}
    \operatorname{Fil}\left(A_0\right)|_{\tilde{\cZ}} \stackrel{\tilde{x}}{\rightarrow} L_{G^\sharp}|_{\tilde{\cZ}}.
\end{equation}
The divisor $\cZ$ is the largest closed formal subscheme of $\tilde{\cZ}$ over which \eqref{eq:obstruction map L_G sharp} is trivial.
\end{proposition}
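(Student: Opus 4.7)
Plan: I will prove this by closely paralleling the strategy of Proposition \ref{prop:deformation of Z x interior}, with the essential replacement being Grothendieck--Messing theory for $1$-motives (\cite{barbieri2001albanese}) in place of Grothendieck--Messing for abelian schemes. The argument splits into two parts corresponding to the two claims.

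For the first part---that the image of \eqref{eq:obstruction map Lie G sharp} sits in $L_{G^\sharp}|_{\tilde\cZ}$---the key input is the $\Oo_{\kay}$-linearity of $\tilde x$, inherited from the $\Oo_{\kay}$-linearity of $x$. Since $\mathbf A_0$ has signature $(1,0)$, $\Oo_{\kay}$ acts on $\Fil(\mathbf A_0)$ via $\bar\varphi$, so the image of $\tilde x$ in $\Lie G^\sharp$ lies in the $\bar\varphi$-isotypic subsheaf. From \eqref{eq:exact sequence Lie G sharp} together with the fact that $\Oo_{\kay}$ acts on $\Lie B$ via $\varphi$ (as $B\in\cM_{(n-2,0)}$), this $\bar\varphi$-isotypic part of $\Lie G^\sharp$ is a line bundle coming entirely from the $\bar\varphi$-component of $\mathfrak m\otimes_\Z \Oo_{\cB_\Phi}$. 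Now by Theorem \ref{thm:toroidal compact}(5), $\partial\cS^*$ is disjoint from $\Exc$, so repeating the argument of Lemma \ref{lem:equality of bundles outside Exc} shows that $\bar\epsilon$ acts invertibly on this component and annihilates the $\varphi$-isotypic part. Hence $L_{G^\sharp}=\bar\epsilon\Lie G^\sharp$ coincides with the $\bar\varphi$-isotypic subsheaf of $\Lie G^\sharp$, and the $\Oo_{\kay}$-linear map $\tilde x$ must land in it.

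For the second part, I would invoke Grothendieck--Messing applied to $G^\sharp$ (or equivalently to the $1$-motive $M$, with the connection supplied by the pushout diagram \eqref{eq:H_1 M is pushout}). Over the first-order thickening $\tilde\cZ$, whose ideal sheaf $I_\cZ$ is square-zero (hence carries trivial divided powers), the $\Oo_{\kay}$-linear homomorphism $x:\mathbf A_0|_\cZ\to G^\sharp|_\cZ$ lifts uniquely to a morphism over $\tilde\cZ$ precisely when the canonical extension $\tilde x$ of De Rham realizations preserves the Hodge filtrations, i.e.\ when $\tilde x(\Fil(\mathbf A_0|_{\tilde\cZ}))\subset \Fil(G^\sharp|_{\tilde\cZ})$. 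Since $\Fil(G^\sharp)$ is precisely the kernel of $H_1^\dR(G^\sharp)\twoheadrightarrow \Lie G^\sharp$, this lifting condition is equivalent to the vanishing of the composite map \eqref{eq:obstruction map Lie G sharp}, which by the first part is the same as the vanishing of \eqref{eq:obstruction map L_G sharp}. By the moduli definition of $\cZ_\Phi^*(m)$ recalled in \S\ref{subsec:special divisors in boundary chart} (combined with the pullback relation $\cZ_\Phi^*(m)=\cZ_{\cB_\Phi}(m)\times_{\cB_\Phi}\cC_\Phi^*$), the largest closed formal subscheme of $\tilde\cZ$ over which such a lift exists is exactly $\cZ$, and the verification of the norm condition $\langle x,x\rangle=m$ is automatic from the lift of $x$ since the form is locally constant.

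The main technical obstacle is the rigorous invocation of Grothendieck--Messing in the semi-abelian (or $1$-motive) setting, as opposed to the abelian one handled in Proposition \ref{prop:deformation of Z x interior}. This is standard in the literature but a little delicate here because $G^\sharp$ is genuinely non-proper; the cleanest route is probably to pass to $M$ via diagram \eqref{eq:H_1 M is pushout} and use the version of Grothendieck--Messing for $1$-motives, where the Hodge filtration on $H_1^\dR(M)$ is the natural filtration coming from the universal vector extension, and the infinitesimal lifting criterion takes exactly the form needed above.
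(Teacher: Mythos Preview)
Your second part is correct and matches the paper: both appeal to Grothendieck--Messing theory for $1$-motives (the paper cites \cite[Lemma 1.3.4.1]{Madapusithesis}), and your remark that the norm condition is automatic by local constancy is a nice clarification.

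The first part, however, has a genuine gap at primes $p\mid D$. Your argument shows that the image of $\tilde x$ lies in the $\bar\varphi$-isotypic subsheaf of $\Lie G^\sharp$, i.e.\ in $\ker(\epsilon\mid_{\Lie G^\sharp})$, and then claims this coincides with $L_{G^\sharp}=\bar\epsilon\,\Lie G^\sharp$. Over a ramified prime the two differ: in characteristic $p$ one has $\epsilon=-\bar\epsilon$ in $\Oo_{\kay}\otimes\F_p\cong\F_p[t]/(t^2)$, and since $\Oo_{\kay}$ acts on $\Lie B$ via $\varphi$, both $\epsilon$ and $\bar\epsilon$ annihilate $\Lie B$. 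Thus $\ker(\epsilon\mid_{\Lie G^\sharp})$ has rank $n-1$ there (all of $\Lie G^\sharp$), whereas $\bar\epsilon\,\Lie G^\sharp$ is genuinely rank one. Your appeal to Theorem~\ref{thm:toroidal compact}(5) and Lemma~\ref{lem:equality of bundles outside Exc} does not rescue this, because the disjointness of $\partial\cS^*$ from $\Exc$ is a statement about $\Lie\mathbf A$ on $\cS$ and says nothing about the action of $\bar\epsilon$ on $\Lie B$ at ramified primes; in particular $\bar\epsilon$ is nilpotent rather than invertible on the $\bar\varphi$-isotypic part in that situation.

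The paper's fix is both shorter and cleaner: rather than landing the image in $\ker\epsilon$, it lands it directly in $\operatorname{im}\bar\epsilon$. The signature-$(1,0)$ condition on $\mathbf A_0$ gives $\bar\epsilon\,H_1^{\dR}(\mathbf A_0)\subset\Fil(\mathbf A_0)$, and since both are rank-one local direct summands they are equal. Then $\Oo_{\kay}$-linearity of $\tilde x$ yields
\[
\tilde x\bigl(\Fil(\mathbf A_0)\bigr)=\tilde x\bigl(\bar\epsilon\,H_1^{\dR}(\mathbf A_0)\bigr)=\bar\epsilon\,\tilde x\bigl(H_1^{\dR}(\mathbf A_0)\bigr)\subset\bar\epsilon\,\Lie G^\sharp=L_{G^\sharp},
\]
which works uniformly at all primes.
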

\begin{proof}
By the signature condition of $A_0$, we know that $\bar\epsilon H_1^{\dR}(A_0) \subset \Fil (A_0)$. Since both of them are rank 1 local direct summands of
the vector bundle $H_1^{\dR}(A_0)$, we know they are equal. Hence \eqref{eq:obstruction map Lie G sharp} takes values in $\bar\epsilon \Lie G^\sharp|_{\tilde{\cZ}}$ which is exactly $L_{G^\sharp}|_{\tilde{\cZ}}$ by definition. This proves the first claim of the proposition. The second claim follows from Grothendieck-Messing theory for $1$-motives, see for example \cite[Theorem 2.1 (iv), Remark 2.2 (c)]{AB11}.
\end{proof}

\subsection{Special divisors on the toroidal compactification}\label{subsec:extended special divisors}
The tautological degeneration data $(B, \kappa, \psi, c, c^\vee)$ of type $\Phi$ and signature $(n-1,1)$ over $\cS^*(\Phi)$ determines a morphism of semi-abelian schemes $\lambda_{\mathbf{G}}:\mathbf{G}\rightarrow \mathbf{G}^\vee$ which extends the principal polarization $\lambda:\mathbf{A}\rightarrow\mathbf{A}^\vee$. Using 
$\lambda_{\mathbf{G}}$ we can define a hermitian form $(,)$ on $\operatorname{Hom}_{\Oo_{\kay}}\left(A_0, G\right)$ in the same way as in \eqref{eq:hermitian form on Hom}. By Proposition \ref{prop:good algebraic neighborhood} and Lemma \ref{lem:decsending lambda sharp}, the form is positive definite.

\begin{definition}\label{def: cZ*}
For any  $m \in \mathbb{Z}_{>0}$, define  $\mathcal{Z}^*(m)$  to be the stack assigning to each $\Oo_{\kay}$-scheme $S$ with an $\Oo_{\kay}$-morphism $S\rightarrow \cS^*$ the set 
\[ \mathcal{Z}^*(m)(S)=\{x \in \operatorname{Hom}_{\Oo_{\kay}}\left(A_0, G\right)\mid  ( x, x)=m\}, \]
where $\left(A_0,\iota_0,\lambda_0, G,\iota,\lambda,\cF\right)$ is the pullback of the universal object over $\mathcal{S}^*$ to $S$.   
\end{definition}
Recall that we have the following rigidity lemma of semi-abelian schemes.
\begin{lemma}\label{lem:rigidity}
Let $S$ be a noetherian scheme and $S_0$ be its reduced locus. Assume $G$ and $H$ are semi-abelian schemes over $S$. Then the reduction map induces a canonical inclusion 
\[\Hom(G,H)\hookrightarrow \Hom(G|_{S_0}, H|_{S_0}), \]
where we take homomorphisms between group schemes.
\end{lemma}
\begin{proof}
This follows easily from the rigidity lemma of abelian schemes and the fact that there are no non-constant homomorphisms from tori to abelian schemes.  
\end{proof}

\begin{proposition}\label{prop: Z*m representability}
For each integer $m>0$,  $\cZ^*(m)$ is a Deligne-Mumford stack containing $\cZ(m)$ as an open substack. The forgetful morphism $\cZ^*(m)\rightarrow \cS^*$ is finite and unramified.
\end{proposition}
\begin{proof}
Given an $S$-valued point $\left(A_0,\iota_0,\lambda_0, G,\iota,\lambda,\cF\right)$ of $\cS^*$, the functor $\underline{\Hom}_{\Oo_{\kay}}\left(A_0, G\right)$ on $(\mathrm{Sch}/S)$ defined by
\[S'\mapsto \operatorname{Hom}_{\Oo_{\kay}}\left(A_0\times_S S', G\times_S S'\right) \]
is representable by a scheme by Grothendieck's representability theorem on $\Hom$ sheaves. Given this, since $\cS^*$ is a Deligne-Mumford stack, so is $\cZ^*(m)$.

Unramifiedness of the forgetful morphism $\cZ^*(m)\rightarrow \cS^*$ follows from the infinitesimal criterion for unramifiedness and Lemma \ref{lem:rigidity}. Quasi-finiteness  of the morphism $\cZ^*(m)\rightarrow \cS^*$ follows from the positivity of the hermitian form $(,)$. To prove the properness of the morphism $\cZ^*(m)\rightarrow \cS^*$, we use the valuation criterion. Assume that we have a commutative diagram of morphisms
\[\begin{tikzcd}
    \Spec K \arrow[r] \arrow[d]  & \cZ^*(m) \arrow[d]\\
    \Spec V \arrow[r]  & \cS^*.
\end{tikzcd}
\]
where $V$ is a DVR with fractional field $K$, we would like to show that there exists a dashed arrow $\Spec V \dasharrow \cZ^*(m)$ after adding which the diagram still commutes. Let $\left(A_0,\iota_0,\lambda_0, G,\iota,\lambda,\cF\right)$ be the pullback of the universal object over $\cS^*$ to $\Spec V$. By assumption there exists $x|_K\in \Hom_{\Oo_{\kay}}(A_0|_{\Spec K}, G|_{\Spec K})$ such that $(x|_K,x|_K)=m$. By \cite[Chapter I, Proposition 2.7]{FC}, $x|_K$ extends to a homomorphism $x\in\Hom(A_0, G)$. As commuting with $\Oo_{\kay}$-action is a closed condition, we have $x\in \Hom_{\Oo_{\kay}}(A_0, G)$. By similar reasoning, we have $(x,x)=m$. This shows that the dashed arrow does exist.
\end{proof}

By Proposition \ref{prop: Z*m representability}, for any geometric point $z \in \partial\cS^*$, there is an \'etale neighborhood $U$ of $z$ in $\cS^*$ such that $\cZ^*(m)|_U\rightarrow \cS^*|_U$ is a closed immersion of schemes on every connected component of its domain. 
Let $\hat{U}$ be the completion of $U$ along $\partial S^*$.  Let $\cZ$ be a connected component of $\cZ^*(m)|_{\hat U}$ and $\tilde{\cZ}$ be its first order thickening.
The universal morphism $x:\mathbf{A}_0|_{\cZ}\rightarrow \mathbf{G}|_{\cZ}$ induces a morphism of vector bundles over $\cZ$
\[H_1^\dR(\mathbf{A}_0|_{\cZ})\xrightarrow{x} H_1^\dR(\mathbf G|_{\cZ}), \]
which maps $\Fil(\mathbf{A}_0|_{\cZ})$ to $\Fil(\mathbf G|_{\cZ})$. By Grothendieck-Messing theory for $1$-motives, this morphism admits a canonical extension to a morphism between vector bundles over $\tilde{\cZ}$,
\[H_1^\dR(\mathbf{A}_0|_{\tilde{\cZ}})\xrightarrow{\tilde{x}} H_1^\dR(\mathbf G|_{\tilde{\cZ}}), \]
which determines a morphism (still denoted by $\tilde{x}$)
\begin{equation}\label{}
    \Fil(\mathbf{A}_0|_{\tilde{\cZ}})\xrightarrow{\tilde{x}} \Lie(\mathbf G|_{\tilde{\cZ}}).
\end{equation}

\begin{proposition}\label{prop:deformation cS star}
The morphism \eqref{eq:obstruction map L_A} can be extended to a morphism  between  line bundles:
    \begin{equation}\label{eq:obstruction map L_G app}
 \operatorname{Fil}\left(\mathbf{A}_0\right)|_{\tilde{\cZ}} \stackrel{\tilde{x}}{\rightarrow} L_{\mathbf{G}} |_{\tilde{\cZ}}.
    \end{equation}
The divisor $\cZ$ is the largest closed formal subscheme of $\tilde{\cZ}$ over which \eqref{eq:obstruction map L_G app} is trivial.
\end{proposition}
\begin{proof}
The proposition can be proved similarly as Proposition \ref{prop:deformation C Phi star}.    
\end{proof}

Now we consider a geometric point $z$ on $\cB_\Phi$. We can choose an \'etale neighborhood $X^{(z)}$ of $z$ in $\cC^*_\Phi$ together with an automorphism $\gamma:\Spec \hat{R}_z\rightarrow \Spec \hat{R}_z$ where $\hat{R}_z$ is the completed local ring of $z$ in $\cC^*_\Phi$ such that Proposition \ref{prop:good algebraic neighborhood} holds. In the following discussion, the morphism $\Spec \hat{R}_z \rightarrow \cS^*$ is meant to be the composition $\Spec \hat{R}_z \rightarrow X^{(z)}\rightarrow \cS^*$ in the context of \S\ref{subsec:toroidal compactification}.
\begin{proposition}\label{prop:compare Z* and Z*Phi}
There is an isomorphism $ \cZ^*(m)|_{\Spec \hat{R}_z}\xrightarrow{\sim} \cZ^*_\Phi(m)|_{\Spec \hat{R}_z}$ such that the following diagram is Cartesian.
\begin{equation}\label{eq:compare Z* and Z*Phi}
   \begin{tikzcd}
    \cZ^*(m)|_{\Spec \hat{R}_z} \arrow[r] \arrow[d]  & \Spec \hat{R}_z \ar[d] \arrow[d,"\gamma"]\\
    \cZ^*_\Phi(m)|_{\Spec \hat{R}_z} \arrow[r] & \Spec \hat{R}_z.
\end{tikzcd} 
\end{equation}    
\end{proposition}
\begin{proof}
Since both horizontal morphisms in \eqref{eq:compare Z* and Z*Phi} are finite and unramified, after passing to a further \'etale cover if necessary, we can assume the horizontal morphisms in \eqref{eq:compare Z* and Z*Phi} are closed immersion when restricted to each connected component, and the conclusion of Proposition \ref{prop:good algebraic neighborhood} still holds.

Recall that $\gamma$ induces the identity on $\Spec \hat{R}_z/I_z$ (see Proposition \ref{prop:good algebraic neighborhood}) and $G^\sharp|_{\Spec \hat{R}_z/I_z}=\bfG_{\Spec \hat{R}_z/I_z}$ where $G^\sharp$ is the tautological semi-abelian scheme over $\cB^\Phi$ and $\bfG$ is the universal semi-abelian scheme over $\cS^*$. Hence 
\[\cZ^*(m)|_{\Spec \hat{R}_z/I_z}=\cZ^*_\Phi(m)|_{\Spec \hat{R}_z/I_z} \]
by the definition of both sides. Suppose $x \in \Hom(\bfA_0|_z, G^\sharp|_z)$ such that $(x,x)=m$. Define a functor $\cZ^*_\Phi(x)$ over $\Spec \hat{R}_z$ such that for any $\Spec \hat{R}_z$-scheme $S$, $\cZ^*_\Phi(x)(S)$ is the isomorphism classes of tuples 
\[\left(\bfA_{0,S},\iota_{0,S},\lambda_{0,S},  G_S^\sharp, \iota_S^\sharp, \lambda_S^\sharp, \cF_S^\sharp,\bx \right)\]
where $(\bfA_{0,S},\ldots,\cF_S^\sharp)$ is the pullback of the tautological object over $\cC^*_\Phi$ via $S\rightarrow \Spec \hat{R}_z \rightarrow \cC^*_\Phi$ while $\bx\in \Hom (\bfA_{0,S},G_S^\sharp)$ and restricts to $x \in \Hom(\bfA_0|_z, G^\sharp|_z)$. By Grothendieck-Messing theory,  $\cZ^*_\Phi(x)$ is a subscheme of $\Spec \hat{R}_z$.
By Lemma \ref{lem:rigidity}, we have the following decomposition of $\cZ^*_\Phi(m)|_{\Spec \hat{R}_z}$
\[\cZ^*_\Phi(m)|_{\Spec \hat{R}_z}=\bigsqcup_{\substack{x\in \Hom(\bfA_0|_z, G^\sharp|_z)\\ (x,x)=m }} \cZ^*_\Phi(x).\]
Similarly define a functor $\cZ^*(x)$ over $\Spec \hat{R}_z$ such that for any $\Spec \hat{R}_z$-scheme $S$, $\cZ^*(x)(S)$ is the isomorphism classes of tuples 
\[\left(\bfA_{0,S},\iota_{0,S},\lambda_{0,S},  \bfG_S, \iota_S, \lambda_S, \cF_S,\by \right)\]
where $(\bfA_{0,S},\ldots,\cF_S)$ is the pullback of the universal object over $\cS^*$ via $S\rightarrow \Spec \hat{R}_z \rightarrow \cS^*$ while $\by\in \Hom (\bfA_{0,S},\bfG_S)$ and restricts to $x \in \Hom(\bfA_0|_z, G^\sharp|_z)$.  Then $\cZ^*(x)$ is a subscheme of $\Spec \hat{R}_z$, and we have the following decomposition of $\cZ^*(m)|_{\Spec \hat{R}_z}$
\[\cZ^*(m)|_{\Spec \hat{R}_z}=\bigsqcup_{\substack{x\in \Hom(\bfA_0|_z, G^\sharp|_z)\\ (x,x)=m }} \cZ^*(x).\]

Take $S=\cZ^*_\Phi(x)$.
Recall that by taking the quotient of $(G_S^\sharp, \iota_S^\sharp, \lambda_S^\sharp, \cF_S^\sharp)$ by the image of the period map $\underline{\mathfrak n}\xrightarrow[]{u} G_S^\sharp$, we get a degenerating abelian scheme
$({}^\heart G_S,{}^\heart \iota_S,{}^\heart \lambda_S,{}^\heart \cF_S)$  of type $\Phi$ relative to $(S,S\cap \cB_\Phi,\eta_S)$. Let $\by$ be the composition of $\bx$ with quotient map $G_S^\sharp \rightarrow {}^\heart G_S$, then  $\by\in \operatorname{Hom}_{\Oo_{\kay}}\left(\bfA_{0,S}, {}^\heart G_S\right)$ and $( \by, \by)=m$.
Combining with Proposition \ref{prop:good algebraic neighborhood},
we have a tuple $\gamma^*\left(\bfA_{0,S},\iota_{0,S},\lambda_{0,S}, {}^\heart G_S,{}^\heart \iota_S,{}^\heart \lambda_S,{}^\heart \cF_S,\by\right)$ over $\gamma^*(\cZ_\Phi(x))$ which satisfies the definition of $\cZ^*(x)$.
This shows that $\gamma^*(\cZ^*_\Phi(x))$ is a substack of $\cZ^*(x)$. 

Let $I$ (resp. $J$) be the defining ideal of  $\cZ^*(x)$ (resp.  $\gamma^*(\cZ_\Phi(x))$) in $\Spec \hat{R}_z$. Then $I\subseteq J$. Since $\dim G^\sharp_S=\dim {}^\heart G_S$, we have a canonical isomorphism $\Lie G^\sharp_S\cong \Lie {}^\heart G_S$. 
By Proposition \ref{prop:good algebraic neighborhood}, $\gamma^*(\Lie {}^\heart G_S)\cong\Lie \bfG_S$. Combining with Corollary \ref{cor:L_G on boundary} and \eqref{eq:LGsharp}, we have
\[L_{\bfG}|_{\gamma^*(S)}=\bar \epsilon \Lie \bfG_{\gamma^*(S)} \cong  \gamma^*(\bar\epsilon\Lie {}^\heart G_S)\cong\gamma^*(\bar\epsilon\Lie  G^\sharp_S)=\gamma^*(L_{G^\sharp}|_S). \]
Moreover by the definition of $\by$, the following diagram commutes
\[
  \begin{tikzcd}
    H_1^\dR({\bf A}_0|_{\gamma^*(S)}) \arrow[rd,"\gamma^*(\by)"] \arrow[r,"\gamma^*(\bx)"] & \gamma^*(\Lie  G^\sharp_S)  \arrow[d,"\cong"]\\
     & \Lie \bfG_{\gamma^*(S)}.
\end{tikzcd} 
\]
By Proposition \ref{prop:deformation C Phi star}, the above diagram factors through 
\[
  \begin{tikzcd}
    H_1^\dR({\bf A}_0|_{\gamma^*(S)}) \arrow[rd,"\gamma^*(\by)"] \arrow[r,"\gamma^*(\bx)"] & \gamma^*(L_{G^\sharp}|_S)  \arrow[d,"\cong"]\\
     & L_{\bfG}|_{\gamma^*(S)}.
\end{tikzcd} 
\]
Let $\widetilde{\cZ^*(x)}$ be the first order thickening of $\cZ^*(x)$ in $\Spec \hat{R}_z$. By Grothendieck-Messing theory, the right-downward arrow in the above diagram extends over $\widetilde{\cZ^*(x)}$, hence the whole diagram extends to
\begin{equation}\label{eq:gamma* diagram}
   \begin{tikzcd}
    H_1^\dR({\bf A}_0|_{\widetilde{\cZ^*(x)}}) \arrow[rd,"\gamma^*(\by)"] \arrow[r,"\gamma^*(\bx)"] & \gamma^*(L_{G^\sharp})|_{\widetilde{\cZ^*(x)}}  \arrow[d,"\cong"]\\
     & L_{\bfG}|_{\widetilde{\cZ^*(x)}}.
\end{tikzcd} 
\end{equation}
By Proposition \ref{prop:deformation cS star} and Proposition \ref{prop:deformation C Phi star}, 
$\cZ^*(x)$ (resp. $\cZ^*_\Phi(x)$) is the vanishing locus of $\gamma^*(\by)$ (resp. $\gamma^*(\bx)$) in diagram  \eqref{eq:gamma* diagram}. This shows that
\[\widetilde{\cZ^*(x)}\cap \cZ^*_\Phi(x)=\cZ^*(x),  \]
in other words, $J\equiv I \pmod{I^2}$. In particular $J\subseteq I$. Hence $I=J$, and $\gamma^*(\cZ^*_\Phi(x))=\cZ^*(x)$. The proposition is proved.
\end{proof}

\section{Pullback of algebraic special divisors}\label{sec:pullback I}

\subsection{Morphisms betweeen unitary Shimura varieties}
In this section,  we fix  the hermitian lattice $L$ of signature $(n-1,1)$ as in \eqref{eq:L},  and  a unimodular hermitian $\Oo_{\kay}$-lattice $\Lambda$ with signature $(m,0)$. Clearly, we have a natural map  
\begin{equation} \label{eq:Lattice}
  [[L]] \rightarrow [[L^\dia]] , \quad  M\mapsto M^\dia=M\obot \Lambda
\end{equation}. 
 
We denote by $\cS^\dia$ the integral model of the Shimura variety determined by $L^\dia$ and by $\cS^{\dia,*}$ its toroidal compactification as defined in Section \ref{subsec:toroidal compactification}. We often add the superscript $\dia$ to already defined notations to mark objects associated to $\cS^\dia$. For example, we use $\cZ^{\dia,*}(m)$ (resp. $\cZ^\dia(m)$) to denote special divisors on $\cS^{\dia,*}$ (resp. $\cS^\dia$).

Let $S$ be an $\Oo_{\kay}$-scheme. Consider $(A_0,\iota_0,\lambda_0,A,\iota,\lambda,\cF)\in \cS(S)$. Then  Serre's tensor construction carried out in \cite[Theorem A]{AK} induces a morphism 
\begin{equation}
T_{\Lambda}:     \cM_{(1,0)}\rightarrow \cM_{(m,0)}, \quad 
  (A_0,\iota_0,\lambda_0)      \mapsto (A_0\otimes_{\Oo_{\kay}}\Lambda,\iota_0\otimes_{\Oo_{\kay}}\Lambda,\lambda_0\otimes_{\Oo_{\kay}}\Lambda).
\end{equation}
This induces a morphism
\begin{equation}\label{eq: embed map}
 \varphi_{\Lambda}:    \cS \rightarrow \cS^\dia, 
(A_0,\iota_0,\lambda_0,A,\iota,\lambda,\cF)\mapsto(A_0,\iota_0,\lambda_0,(A,\iota,\lambda)\times T_{\Lambda}(A_0,\iota_0,\lambda_0),\cF \times (\Lie A_0 \otimes_{\Oo_{\kay}} \Lambda)). 
\end{equation}    

\begin{lemma}
    The morphism $\varphi_\Lambda$ is finite and unramified.
\end{lemma}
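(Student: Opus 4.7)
The plan is to verify the two defining conditions of a finite morphism, namely that $\varphi_\Lambda$ is proper and has finite fibers at every geometric point; since $\cS^*$ and $\cS^{\dia,*}$ are both Deligne--Mumford stacks of finite type over $\Spec\Oo_{\kay}$, these two conditions together imply finiteness. Representability of $\varphi_\Lambda$ is also automatic, since an automorphism of $(A_0,A,\iota,\lambda,\cF_A)$ is determined by its image on $(A_0,A\times T_\Lambda(A_0),\ldots)$.

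First, properness comes for free. By Theorem \ref{thm:toroidal compact}, both $\cS^*$ and $\cS^{\dia,*}$ are proper over $\Spec\Oo_{\kay}$; since $\cS^{\dia,*}\to\Spec\Oo_{\kay}$ is separated, any $\Oo_{\kay}$-morphism from a proper $\Oo_{\kay}$-stack to a separated $\Oo_{\kay}$-stack is automatically proper.

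Next I would establish quasi-finiteness by analyzing geometric fibers. Fix $\bar z=(A_0,A^\dia,\iota^\dia,\lambda^\dia,\cF^\dia)\in\cS^{\dia,*}(\bar k)$. In the interior case $\bar z\in\cS^\dia(\bar k)$, a preimage is a tuple $(A_0,A,\iota,\lambda,\cF_A)\in\cS(\bar k)$ together with an isomorphism $\alpha\colon A\times(A_0\otimes_{\Oo_{\kay}}\Lambda)\xrightarrow{\sim}A^\dia$ respecting the $\Oo_{\kay}$-action, polarization, and Kr\"amer flag. Via the Tate-module rigidification \eqref{eq: iso tate and L}, specifying such an $\alpha$ amounts to specifying an orthogonal decomposition $L^\dia\cong L'\obot\Lambda'$ of the hermitian $\Oo_{\kay}$-lattice $L^\dia$ with $L'\in[[L]]$ and $\Lambda'\cong\Lambda$. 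The set of such decompositions is a torsor under the unitary group of $L^\dia$ by Witt's theorem, and modulo the finite automorphism group of $(A^\dia,\iota^\dia,\lambda^\dia)$ only finitely many orbits survive, so the interior fiber is finite. For $\bar z$ in a boundary stratum $\cS^{\dia,*}(\Phi^\dia)$, I would pass to a toroidal chart $\cC^*_{\Phi^\dia}$ via Theorem \ref{thm:toroidal compact}(3) and extend $T_\Lambda$ to the semi-abelian group scheme $\mathbf G$ using Proposition \ref{prop:extension of universal object}. Because $T_\Lambda(A_0)$ is a genuine abelian variety of signature $(m,0)$, it maps into the abelian quotient of the semi-abelian degeneration of $A^\dia$, so the toric rank of any preimage must match that of $\bar z$; this pins down the cusp label $\Phi$ of any preimage in terms of $\Phi^\dia$ up to finite ambiguity, and the remaining freedom in the abelian quotient $B^\dia\cong B\times T_\Lambda(A_0)$ is controlled by the same Witt-type counting as in the interior case.

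Combining properness and the finiteness of geometric fibers then yields that $\varphi_\Lambda$ is finite. The hard part will be the boundary counting: one must verify that the Serre tensor construction interacts cleanly with the $1$-motive data on toroidal charts (the semi-abelian part $G^\sharp$ and the period map $\underline{\mathfrak n}\to G^\sharp$), so that a decomposition of the $\dia$-version of the $1$-motive admits only finitely many realizations compatible with the cusp-label and polarization data of $\bar z$. Once this boundary compatibility is in place, the argument reduces to the clean Witt-theoretic count on the abelian part.
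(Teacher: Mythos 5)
Your overall strategy (proper $+$ quasi-finite $\Rightarrow$ finite) coincides with the paper's, and your properness argument is fine --- arguably cleaner than the paper's appeal to the N\'eron property, since $\cS^*$ is proper over $\Oo_{\kay}$ and $\cS^{\dia,*}$ is separated. The problem is the quasi-finiteness step. You assert that, via \eqref{eq: iso tate and L}, a splitting $\alpha\colon A\times(A_0\otimes_{\Oo_{\kay}}\Lambda)\xrightarrow{\sim}A^\dia$ ``amounts to'' an orthogonal decomposition $L^\dia\cong L'\obot\Lambda'$, and that Witt's theorem together with the finiteness of $\mathrm{Aut}(A^\dia,\iota^\dia,\lambda^\dia)$ yields finiteness of the fiber. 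This fails for two reasons. First, \eqref{eq: iso tate and L} is only a prime-by-prime, non-canonical comparison of Tate modules; it does not convert decompositions of the abelian variety $A^\dia$ into decompositions of the global hermitian lattice $L^\dia$. Second, and fatally, $L^\dia$ has signature $(n+m-1,1)$: it is indefinite, its unitary group is an infinite arithmetic group, and the set of isometric embeddings $\Lambda\hookrightarrow L^\dia$ with unimodular orthogonal complement is infinite (already an indefinite unimodular lattice contains infinitely many vectors of any fixed positive norm). Quotienting an infinite set by the finite group $\mathrm{Aut}(A^\dia,\iota^\dia,\lambda^\dia)$ cannot produce a finite set, so your ``Witt-theoretic count'' does not establish quasi-finiteness.

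The finiteness must instead come from homomorphisms of abelian schemes rather than from the indefinite lattice: a splitting $\alpha$ is determined by an isometric embedding of $\Lambda$ onto a direct summand of $\Hom_{\Oo_{\kay}}(A_0,A^\dia)$, and this $\Oo_{\kay}$-module is finitely generated and \emph{positive definite} for the form \eqref{eq:hermitian form on Hom} by \cite[Lemma 2.7]{KR2}; hence it contains only finitely many vectors of each norm and only finitely many such embeddings. The same mechanism controls the boundary strata after replacing $A^\dia$ by its semi-abelian degeneration, since maps out of the genuine abelian scheme $A_0\otimes_{\Oo_{\kay}}\Lambda$ still land in a finitely generated positive-definite module. (The paper itself disposes of quasi-finiteness in one line via the finiteness of $\cM_{(1,0)}$ over $\Oo_{\kay}$; either way the key input is positivity, which your argument replaces with the wrong, indefinite, object.)
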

\begin{proof}
    As in the proof of Proposition \ref{prop: Z*m representability}, the properness of $\varphi_\Lambda$ follows from \cite[Chapter I, Proposition 2.7]{FC} and the valuation criterion, the unramifiedness of $\varphi_\Lambda$ follows from the infinitesimal criterion and Lemma \ref{lem:rigidity}. The quasi-finiteness of $\varphi_\Lambda|_{\cS}$ comes from Faltings and Tate's isogeny theorem. Indeed,   $A\times T_\Lambda(A_0)\cong A'\times T_\Lambda(A_0)$ implies that $A$ and $A'$ are isogenous by looking at the Tate modules and Dieudonn\'e modules. 
\end{proof}

\begin{lemma}  Let the notation be as above.
\begin{enumerate}
\item  The map (\ref{eq:Lattice}) gives a bijection between the genus of $L$  and the genus of  $L^\dia$.

\item  The map  $\varphi_\Lambda $  induces a bijection between $\pi_0(\mathcal S(\C))$ (the set or connected components of $\mathcal S(\C)$ ) and $\pi_0(\mathcal S^\dia(\C))$. 
\end{enumerate}

\end{lemma}
\begin{proof} Let $V=L \otimes_\Z \Q$, $H=U(V)$ and $H_0=\hbox{SU}(V)$. Then  $H_0$ is semi-simple and simply connected, and has thus strong apparoximation property. In  particular, let 
$$K=U(\hat L) =\{  h \in H(\A_f):\,   h L =L\} $$
and  $K_0 = K \cap H_0(\A_f)$. Then 
 $H_0(\A_f)= H_0(\A) K_0$. The exact sequence 
 $$
 1 \rightarrow H_0   \rightarrow  H \rightarrow  \kay^1  \rightarrow 1 
 $$
implies that
$$
H(\Q) \backslash H(\A_f)/ K \cong  \kay^1 \backslash \kay_f^1/\det K.
$$
Notice that $\det K = \hat{\OO}_{\kay}^\times\cap \kay_f^1 = \hat{\OO}_{\kay}^1$ does not depend on $L$. 
So we have 
$$
 \begin{tikzcd}
   H(\Q) \backslash H(\A_f)/ K  \arrow[r] \arrow[r, "\cong"] \arrow[d]  & \kay^1 \backslash \kay_f^1/\hat{\OO}_{\kay}^1 \ar[d] \arrow[d,"="]\\
     H^\dia(\Q) \backslash H^\dia(\A_f)/ K^\dia   \arrow[r] \arrow[r, "\cong"]  & \kay^1 \backslash \kay_f^1/\hat{\OO}_{\kay}^1.
\end{tikzcd} 
$$
So the left hand side arrow is also a  bijection.
Since
$$
H(\Q) \backslash H(\A_f)/ K \cong [[L]],\quad   h \mapsto hL 
$$
the above bijection  proves (1). 

Claim  (2) follows from  (1) easily.  Indeed, recall from  Section \ref{sec: unitary shimura variety}  that $\mathcal S$  is the integral model of the Shimura variety associated to 
$$
G =\{ (g_0, g) \in \hbox{GU}(W_0) \times \hbox{GU}(W) :\, \det g_0 = \nu(g)\} 
  \cong  \hbox{GU}(W_0) \times   U(V) ,
  $$
where  $W_0 =\mathfrak a_0 \otimes_\Z \Q$, $W= \mathfrak a \otimes_\Z \Q$, and $V=L \otimes_\Z \Q$ with $L =\hbox{Hom}(\mathfrak a_0, \mathfrak a)$.  The isomorphism map on  $U(V)$ is give by 
$(g_0, g) (f) (w_0) = g f(g_0^{-1} w_0)$. 
So  
$$\pi_0(\mathcal S(\C)) = \hbox{CL}(\kay) \times \pi_0(\hbox{Sh}(L))$$
where $\hbox{Sh}(L) $ is the  Shimura variety associated to $(H, K)$.  It is easy to see 
$$
\hbox{Sh}(L)(\C) =H(\Q) \backslash \mathcal D \times H(\A_f) /K   \cong \cup_{M \in [[L]]}  \Gamma_M \backslash \mathcal D
$$
with  
$$
\Gamma_M = \{ h\in H(\Q):\,  h M =M\}
=  (g  K g^{-1}) \cap H(\Q)
$$
where $g \in H(\A_f)$ and  with $g L = M$.  In  particular,  $\pi_0(\hbox{Sh}(L)) = [[L]]$. Now (2) is clear.
\end{proof}

\begin{remark} The above lemma does not imply `cancellation law' on unitary Hermitian lattices in general as we require the starting lattices  are in the same genus in the lemma.  Indeed it is not hard to find  counter example to the cancellation law. Let $L_1$ and $L_2$ be two unimodular Hermitian  lattices with gram matrices 
$$
A_1= \begin{pmatrix}
    0  &1\\ 1 &0
\end{pmatrix}
\hbox{ and }
A_1= \begin{pmatrix}
    1  &1\\ 1 &0
\end{pmatrix}
$$
 It is easy to check that $B_1 =\hbox{Diag}(A_1, 1)$ and 
 $B_2= \hbox{Diag}(A_2, 1)$  are equaivalent (i.e.,  $L_1 \obot \Lambda  \cong  L_2 \obot \Lambda$):
 $
 {}^t P B_1 \bar  P= B_2
 $
 with 
 $$
 P=\begin{pmatrix}
     0 &1 &1
     \\
     1 &0 &0
     \\
     1 &0 &-1
 \end{pmatrix}
 $$
 On the other hand,  $L_1$ and $L_2$ are not (Hermitian) equivalent over $\Z_2$ if $\kay/\Q$ is ramified  at $2$, and of course are  not equivalently over $\Z$. By the way, when  $\kay/Q$ is unramified at $2$,  $L_1$ and $L_2$ are  equivalent.
\end{remark}

\subsection{Extension of the morphism to the boundary}
We first recall an extension criterion from \cite{Lan}. Let $X$ be a noetherian normal $\Oo_{\kay}$-scheme and $Z$ is a Cartier divisor of $X$. Let $U=X\setminus Z$. Suppose $(G,\iota,\lambda,\cF)$ is a degenerating abelian scheme of signature $(n-1,1)$ relative to $(X,Z,U)$ in the sense of Definition \ref{def:degenerating abelian scheme} such that there is an $\Oo_{\kay}$-morphism $\varphi:U\rightarrow \cS$ and $(A_0,\iota_0,\lambda_0,G,\iota,\lambda,\cF)|_U$ is the pullback of the universal family over $\cS$. Let $s$ be a geometric point of $Z$, and an $\Oo_{\kay}$-morphism $\phi:\Spec V\rightarrow X$ centered at $s$ where $V$ is a complete discrete valuation ring with fraction field $K$ such that $\phi|_{\Spec K}$ factor through $U$. Then $(A_0,\iota_0,\lambda_0,G,\iota,\lambda,\cF)|_{\Spec V}$ determines an element $\xi\in\mathrm{DEG}^\Phi_{(n-1,1)}(\Spec V,s,\Spec K)$ for a cusp label $\Phi=(\mathfrak{m}\subset M)\in \mathrm{Cusp}([[L]])$ determined by setting $\mathfrak m$ to be the toric part of $G_s$. Define $\mathfrak n=\mathfrak m^\vee$ and
\begin{equation}\label{eq:definition of S_Phi}
\Sym_\Phi=\Sym^2_\Z(\mathfrak{n})/\langle (x\mu)\otimes \nu-\mu\otimes (\bar x \nu):x\in \Oo_{\kay}, \mu,\nu \in \mathfrak{n} \rangle.  
\end{equation}
By Theorem \ref{thm:DD DEG equivalence}, the element $\xi\in\mathrm{DEG}^\Phi_{(n-1,1)}(\Spec V,s,\Spec K)$ determines a degeneration data in
\[\mathrm{DD}^\Phi_{(n-1,1)}(\Spec V,s, \Spec K),\]
which in turn determines a map $B_\Phi:\Sym_\Phi\rightarrow K^*$, see \cite[\S 6.3.1.1]{Lan}. Let $v$ be the canonical discrete valuation of $K$. 
\begin{lemma}\label{lem:extension criterion}
    The morphism $\varphi:U\rightarrow \cS$ extends to a (necessarily unique) morphism $\varphi:X\rightarrow \cS^*$ such that $(A_0,\iota_0,\lambda_0,G,\iota,\lambda,\cF)$ is the pullback of the universal family over $\cS^*$ under $\varphi$ if and only if for any geometric point $s$ of $Z$ and every $\Oo_{\kay}$-morphism $\phi:\Spec V\rightarrow X$ centered at $s$ as above, $v\circ B_\Phi$ is contained in the unique positive semi-definite cone of $\Sym_\Phi^\vee\otimes_\Z \R$.
\end{lemma}
\begin{proof}
The lemma follows the same proof as \cite[Theorem 6.4.1.1(6)]{Lan}.
\end{proof}

Given a proper cusp label $\Phi=(\mathfrak{m}\subset M)\in \mathrm{Cusp}([[L]])$, we may produce a new cusp label 
\[\Phi^\dia\coloneqq(\mathfrak{m},M\obot \Lambda) \in \mathrm{Cusp}(M\obot \Lambda)\subset\mathrm{Cusp}([[L^\dia]]).\]

\begin{proposition}\label{prop:pullback boundary}
The morphism $\varphi_{\Lambda}:\cS \rightarrow \cS^\dia$ extends to a morphism  $\varphi_{\Lambda}:\cS^* \rightarrow \cS^{\dia,*}$ (still denoted by the same notation) which is finite and unramified. It has the following properties. 
\begin{enumerate}
    \item The  pullback of the universal family over $\cS^{\dia,*}$ under $\varphi_\Lambda$ is
    \begin{equation}\label{eq:Serre tensor semi abelian scheme}
 (\mathbf{A}_0,\iota_0,\lambda_0,(\mathbf{G},\iota,\lambda)\times T_{\Lambda}(\mathbf{A}_0,\iota_0,\lambda_0),\cF \times (\Lie \mathbf{A}_0 \otimes_{\Oo_{\kay}} \Lambda)) 
\end{equation}
    \item We have the equality of line bundles over $\cS^*$
\begin{equation}\label{eq: pullback of Omega sec 5}
\varphi_\Lambda^*(\Omega^\dia) \cong \Omega.
\end{equation}
 \item For a cusp label $\Phi'$ of $\cS^\dia$, we have the following equation of Cartier divisors.
 \begin{equation}\label{eq:pullback of boundary divisor}
     \varphi_{\Lambda}^*(\cS^{\dia,*}(\Phi'))=
     \begin{cases}
    \cS^*(\Phi) & \text{ if $\Phi'=\Phi^\dia$ for some cusp label $\Phi$ of $\cS^*$},\\
     0 & \text{ otherwise}.
     \end{cases}
  \end{equation}
     \end{enumerate}
\end{proposition}
\begin{proof}
To prove the proposition, we proceed in several steps.

\noindent {\bf Step 1}: Construct an analogue of $\varphi_\Lambda$ for the formal boundary charts $\cC^*_\Phi$. Let $L_\Phi= \mathfrak{m}^\bot/\mathfrak{m}$ be as in  \eqref{eq: normal decom}, then $L_{\Phi^\dia}=L_\Phi\oplus\Lambda$. Recall from Definition \ref{def:degeneration data} that we have the tautological degeneration data $(B, \kappa, \psi, c, c^\vee, \tau)$ (resp. $(B^\dia, \kappa^\dia, \psi^\dia, c^\dia, c^{\dia,\vee}, \tau^\dia)$) over $(\cC^*_\Phi,\cB_\Phi,\cC_\Phi)$ (resp. $(\cC^*_{\Phi^\dia},\cB_{\Phi^\dia},\cC_{\Phi^\dia})$). Then $B^\dia=B\times (A_0\otimes_{\Oo_{\kay}}\Lambda)$.
By \eqref{eq:B_Phi}, we have 
\[\cB_\Phi\cong E\otimes_{\Oo_{\kay}} L_\Phi,\  \cB_{\Phi^\dia}\cong E\otimes_{\Oo_{\kay}} L_\Phi^\dia, \]
where $E=\underline{\Hom}_{\mathcal O_{\kay}}(\mathfrak n,\mathbf{A}_0)$, $\mathbf{A}_0$ is the universal elliptic curve over $\cM_{(1,0)}$, and $\otimes$ means Serre tensor. We then define a closed immersion
\[\cB_\Phi\hookrightarrow \cB_{\Phi^\dia}, \quad  \sum z_i\otimes s_i \mapsto  \sum z_i\otimes (s_i,0),\]
where $z_i\in E(S)$ for any $\cB_\Phi$-scheme $S$. From the definition of the Poincar\'e bundle $P$ (resp. $P^\dia$) over $B\times B^\vee$ (resp. $B^\dia\times B^{\dia,\vee}$), we have
\begin{equation}\label{eq:pullback Poincare bundle}
   P^\dia|_{B\times B^\vee}=P. 
\end{equation}
Recall from \S \ref{subsec:toroidal compactification} that $\cC_{\Phi^\dia}^*$ (resp. $\cC_{\Phi}^*$) is the total space of the line bundle $\cL_{\Phi^\dia}^{-1}$ (resp. $\cL_\Phi^{-1}$).
By \eqref{eq:pullback Poincare bundle}, we have
\[ \cL_{\Phi^\dia}|_{\cB_\Phi}=\cL_\Phi. \]
Hence we get a natural closed immersion
\begin{equation}
\varphi_{\Lambda,\Phi}:\cC_{\Phi}^*\hookrightarrow \cC_{\Phi^\dia}^*.   
\end{equation}
The closed immersions $\cB_\Phi\hookrightarrow \cC_\Phi^*$ and $\cB_{\Phi^\dia}\hookrightarrow \cC_{\Phi^\dia}^*$ defined by zero sections make the following diagram Cartesian.
\begin{equation}\label{eq:Cartesian diagram}
    \begin{tikzcd}
\cB_\Phi \arrow[r] \arrow[d]  \arrow[dr, phantom, "\square"]
& \cB_{\Phi^\dia} \arrow[d] \\
\cC_\Phi^* \arrow[r]
& \cC_{\Phi^\dia}^*
\end{tikzcd}
\end{equation}
In particular we have the following equality of Cartier divisors.
\begin{equation}\label{eq:pullback cB}
    \varphi_{\Lambda,\Phi}^*(\cB_{\Phi^\dia})=\cB_\Phi. 
\end{equation}
By the above construction, $c^\dia$  when restricted on $\cC^*_\Phi$ is the composition $\mathfrak{n}\xrightarrow[]{c} B \hookrightarrow B\times (\bfA_0\otimes_{\Oo_{\kay}} \Lambda)$ where $ B \hookrightarrow B\times (\bfA_0\otimes_{\Oo_{\kay}} \Lambda)$ is the obvious closed immersion $z\mapsto (z,0)$. Similar statement is true for $c^{\dia,\vee}$. Hence we have
\begin{equation}\label{eq:restriction of c dia P}
    ((c^{\dia,\vee}\times c^\dia )^*(\mathcal{P}^\dia))|_{B\times B^\vee}=(c^\vee\times c )^*(\mathcal{P}).
\end{equation}

\noindent {\bf Step 2}: Construct an analogue of $\varphi_\Lambda$ for the \'etale neighborhood $X^{(z)}$ for a geometric point $z$ of $\cC^*_\Phi$ (hence also an \'etale neighborhood of $\cS^*$) as in Proposition \ref{prop:good algebraic neighborhood}, and glue them to get the extension we want to construct.
Let 
\begin{equation}\label{eq:tuple before serre tensor}(\bfA_0,\iota_0,\lambda_0,\mathbf{G}^{(z)},\iota^{(z)},\lambda^{(z)},\cF^{(z)})
\end{equation}
be the universal semi-abelian schemes over $X^{(z)}$. Consider the tuple 
\begin{equation}\label{eq:serre tuple}
    (\mathbf{A}_0,\iota_0,\lambda_0,(\mathbf{G}^{(z)},\iota^{(z)},\lambda^{(z)})\times T_{\Lambda}(\bfA_0,\iota_0,\lambda_0),\cF \times (\Lie \bfA_0 \otimes_{\Oo_{\kay}} \Lambda)).
\end{equation}
Restricting the tuple to the interior part $U^{(z)}$ of $X^{(z)}$ defines a morphism $\varphi_{\Lambda}|_{ U^{(z)}}:U^{(z)}\rightarrow \cS^{\dia}$ which is the composition $U^{(z)}\rightarrow \cS\xrightarrow{\varphi_\Lambda} \cS^\dia$.

Let $s$ be any geometric point of $X^{(z)}\setminus U^{(z)}$.
Let $V$ be a complete discrete valuation ring with fraction field $K$ and $\phi:\Spec V \rightarrow X^{(z)}$ be a morphism centered at $s$ such that $\phi|_{\Spec K}$ factor through $\cS$. By \eqref{eq:definition of S_Phi}, we have
\begin{equation}\label{eq:identify SPhi and SPhidia}
\Sym_\Phi=\Sym_{\Phi^\dia}=\Sym^2_\Z(\mathfrak{n})/\langle (x\mu)\otimes \nu-\mu\otimes (\bar x \nu):x\in \Oo_{\kay}, \mu,\nu \in \mathfrak{n} \rangle.  
\end{equation}
The pullback of the universal family \eqref{eq:tuple before serre tensor} to $\Spec V$ defines a degenerating abelian scheme, hence a degeneration data (by Theorem \ref{thm:DD DEG equivalence}) in
\[\mathrm{DD}^\Phi_{(n-1,1)}(\Spec V,s, \Spec K), \]
hence defines a map $B_\Phi:\Sym_\Phi\rightarrow K^*$.
Let $v$ be the canonical discrete valuation of $K$, then Lemma \ref{lem:extension criterion} implies that $v\circ B_\Phi$ is contained in the unique positive semi-definite cone of $\Sym_\Phi^\vee\otimes_\Z \R$.  Similarly  the pullback of \eqref{eq:serre tuple} to $\Spec V$ defines a degeneration data in
\[\mathrm{DD}^{\Phi^\dia}_{(n+m-1,1)}(\Spec V,s, \Spec K), \]
hence also defines a map $B_{\Phi^\dia}:\Sym_{\Phi^\dia}\rightarrow K^*$. Unraveling the construction of $B_\Phi$ and $B_{\Phi^\dia}$ in \cite{Lan} shows that the two agree under the identification \eqref{eq:identify SPhi and SPhidia}. Applying Lemma \ref{lem:extension criterion} again, we know that the  morphism $\varphi_{\Lambda}|_{U^{(z)}}:U^{(z)} \rightarrow \cS^\dia$ extends to a morphism  $\varphi_{\Lambda}|_{X^{(z)}}:X^{(z)} \rightarrow \cS^{\dia,*}$ and \eqref{eq:serre tuple} is the pullback of the universal family over $\cS^{\dia,*}$ under $\varphi_{\Lambda}|_{X^{(z)}}$. We claim that these $\varphi_{\Lambda}|_{X^(z)}$ together with $\varphi_\Lambda:\cS\rightarrow\cS^{\dia}$ glue to a morphism from $\varphi_\Lambda:\cS^*\rightarrow\cS^{\dia,*}$. This is true because $\cS^*$ is the quotient of an \'etale equivalence relation \eqref{eq:glue} which ``identifies" isomorphic semi-abelian schemes and if two tuples $\xi=(A_0,\ldots,G,\ldots)\in X^{(z)}(S)$ and $\xi'=(A'_0,\ldots,G',\ldots)\in X^{(z')}(S)$ are isomorphic, then so are $\varphi_{\Lambda}|_{X^{(z)}}(\xi)$ and $\varphi_{\Lambda}|_{X^{(z')}}(\xi')$. The extension $\varphi_\Lambda$ thus defined is independent of the choice of $X^{(z)}$s used in the gluing process.
Obviously the we have $\varphi_\Lambda(\cS^*(\Phi))\subset \cS^{\dia,*}(\Phi^\dia)$.

\noindent {\bf Step 3}: Compare the morphisms in Step 1 and 2, and conclude the proof.
Let $z$ be a geometric point of $\cB_\Phi$ (also considered as a geometric point of $\cC^*_\Phi$ and $\cS^*$). Let $z^\dia$ be a geometric point of $\cB_{\Phi^\dia}$ whose image under the morphism $\cB_{\Phi^\dia}\rightarrow \cS^{\dia,*}$ is $\varphi_\Lambda(z)$. Choose \'etale neighborhoods $X^{(z^\dia)}\rightarrow\cS^{\dia,*}$ of $z^\dia$ and $X^{(z)}\rightarrow\cS^*$ of $z$ respectively as in Proposition \ref{prop:good algebraic neighborhood}. We claim that 
\begin{equation}\label{eq:varphi z z^dia}
    \varphi_{\Lambda,\Phi}(z)=z^\dia.
\end{equation}
Assuming the claim and using the notations of Proposition \ref{prop:good algebraic neighborhood}, we can define a morphism $\varphi_\Lambda^{(z)}:\Spec \hat{R}_z \rightarrow \hat{R}_{z^\dia}$ by
\[\varphi_\Lambda^{(z)}=(\gamma^\dia)^{-1}\circ \varphi_{\Lambda,\Phi}|_{\Spec \hat R_z} \circ \gamma, \]
where $\varphi_{\Lambda,\Phi}$ is the morphism defined in Step 1. We claim that the following diagram commutes.
\begin{equation}\label{eq:approximation commutes}
    \begin{tikzcd}
\Spec \hat{R}_z \arrow[r,"\varphi_\Lambda^{(z)}"] \arrow[d,"i"]
& \Spec \hat{R}_{z^\dia} \arrow[d,"j"] \\
 \cS^* \arrow[r,"\varphi_{\Lambda}"]
& \cS^{\dia,*}.
\end{tikzcd}
\end{equation}
It follows that $\varphi_\Lambda$ is finite and unramified when restricted to $(\cS^*)^\wedge_{\cS^*(\Phi)}$ as $\varphi_{\Lambda,\Phi}$ is.

To prove the commutativity of \eqref{eq:approximation commutes}, let $({}^\heart G,{}^\heart \iota,{}^\heart \lambda,{}^\heart\cF)$ (resp. $({}^\heart G^\dia,{}^\heart \iota^\dia,{}^\heart \lambda^\dia,{}^\heart\cF^\dia)$) be the Mumford family over $\Spec \hat{R}_z$ (resp. $\Spec \hat{R}_{z^\dia}$) associated to the tautological data $(B, \kappa, \psi, c, c^\vee, \tau)$ (resp. $(B^\dia, \kappa^\dia, \psi^\dia, c^\dia, c^{\dia,\vee}, \tau^\dia)$). By \eqref{eq:restriction of c dia P}, we have 
\begin{equation}\label{eq:pullback of Mumford famliy}
    (\varphi_{\Lambda,\Phi})^* ({}^\heart G^\dia,{}^\heart \iota^\dia,{}^\heart \lambda^\dia,{}^\heart\cF^\dia)=({}^\heart G,{}^\heart \iota,{}^\heart \lambda,{}^\heart\cF)\times T_\Lambda(\bfA_0,\iota_0,\lambda_0),
\end{equation}
By \eqref{eq:pullback of Mumford famliy} and Proposition \ref{prop:good algebraic neighborhood}, we know
\[ (j\circ \varphi_{\Lambda}^{(z)})^* (\bfG^\dia,\iota^\dia, \lambda^\dia,\cF^\dia)=(G^{(z)},\iota^{(z)},\lambda^{(z)}, \cF^{(z)})\times T_\Lambda(\bfA_0,\iota_0,\lambda_0). \]
where $(\bfG^\dia,\iota^\dia, \lambda^\dia,\cF^\dia)$ is the universal object over $\cS^{\dia,*}$.
Here we use the fact that $T_\Lambda(\bfA_0,\iota_0,\lambda_0)$ is defined over $\Oo_{\kay}$, so is invariant under $\gamma$. On the other hand, using the definition of $\varphi_\Lambda$, we know
\[ (\varphi_\Lambda\circ i)^* ({}^\heart G^\dia,{}^\heart \iota^\dia,{}^\heart \lambda^\dia,{}^\heart\cF^\dia)=(G^{(z)},\iota^{(z)},\lambda^{(z)}, \cF^{(z)})\times T_\Lambda(\bfA_0,\iota_0,\lambda_0). \]
By the universal property of $\widehat{\cC}^*_\Phi$ for degenerating abelian schemes in the category
\[\mathrm{DEG}_{(n-1,1)}^\Phi(\Spec \hat{R_z},\Spec (\hat{R}_z/I_z),\hat{\eta}_z) \]
relative to $(\Spec \hat{R_z},\Spec (\hat{R}_z/I_z),\hat{\eta}_z)$, diagram \eqref{eq:approximation commutes} commutes. The same argument proves claim \eqref{eq:varphi z z^dia}.

Equation \eqref{eq: pullback of Omega sec 5} is obvious from the definition of $\Omega$.
By \eqref{eq:pullback cB}, \eqref{eq:approximation commutes} and the fact that $\gamma$ and $\gamma^\dia$ are identities when restricted to boundaries, 
we have equation \eqref{eq:pullback of boundary divisor}.
The proposition is proved.
\end{proof}

\subsection{Pullback of line bundles}
In the following discussion we use the correspondence between line bundles and Cartier divisors. For a Cartier divisor $D$ on $\cS^{\dia,*}$, we use $D|_{\cS^*}$ to denote $\varphi_\Lambda^*(D)$, the pullback of the corresponding line bundle via $\varphi_\Lambda$. 
\begin{proposition}\label{prop:pullback inside}
Assume $m\in \Z$. Then in  $\hbox{Pic}(\mathcal S^*)$
\begin{equation}\label{eq: pullback}
    \cZ^{\dia,*}(m)|_{\cS^*}\cong \displaystyle (\bigotimes_{\substack{m_1+m_2=m\\ m_1\neq 0}}\cZ^*(m_1)^{\otimes r_{\Lambda}(m_2)}) \bigotimes (\Omega^{-1})^{\otimes r_{\Lambda}(m)},
\end{equation}
where  $r_{\Lambda}(m)$ is the cardinality of
\begin{align*}
    R_{\Lambda}(m)\coloneqq \{\lambda\in \Lambda\mid (\lambda,\lambda)=m\}.
\end{align*}
\end{proposition}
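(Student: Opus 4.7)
The plan is to decompose the universal homomorphism. On $\cS$ we have a canonical identification $\varphi_\Lambda^*\mathbf{A}^\dia = \mathbf{A}\times T_\Lambda(\mathbf{A}_0)$, hence a direct-sum decomposition
$\Hom_{\Oo_{\kay}}(\mathbf{A}_0, \varphi_\Lambda^*\mathbf{A}^\dia) = \Hom_{\Oo_{\kay}}(\mathbf{A}_0, \mathbf{A}) \oplus (\End_{\Oo_{\kay}}(\mathbf{A}_0)\otimes_{\Oo_{\kay}}\Lambda) = \Hom_{\Oo_{\kay}}(\mathbf{A}_0, \mathbf{A}) \oplus \Lambda$,
writing $x = (x_1, x_2)$, which preserves hermitian forms: $\langle x, x\rangle = \langle x_1, x_1\rangle + (x_2, x_2)$. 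After an étale base change on $\cS$ (see \cite[Proposition 2.9]{KR2}), the Kudla-Rapoport divisor $\cZ^\dia(m)$ pulls back to a sum of local components indexed by such $x$ with $\langle x, x\rangle = m$, and the plan is to compute the pullback of each component separately, distinguishing the cases $m_1 := \langle x_1, x_1\rangle > 0$ and $m_1 = 0$.

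For the case $m_1 > 0$: here $m_2 = m - m_1 \geq 0$ by positive definiteness of $\Lambda$, and the pullback of the component of $\cZ^\dia(m)$ indexed by $(x_1, x_2)$ is the component $\cZ_{x_1}$ of $\cZ(m_1)$ with multiplicity one. This follows because both components are locally cut out as Cartier divisors by obstruction sections of $\Omega^{\dia,-1}$ and $\Omega^{-1}$ respectively (Proposition \ref{prop:deformation of Z x interior}), and under the isomorphism $\varphi_\Lambda^*\Omega^\dia \cong \Omega$ of Proposition \ref{prop:pullback boundary}(1) the pulled-back defining section for the $(x_1, x_2)$-component equals the defining section for $\cZ_{x_1}$: the $x_2$-summand of the obstruction lifts tautologically to any square-zero thickening by functoriality of the Serre tensor construction $T_\Lambda$, so only the $x_1$-summand contributes. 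Summing over the $r_\Lambda(m_2)$ choices of $x_2 \in R_\Lambda(m_2)$ for each fixed $m_1 > 0$ yields the contribution $\cZ^*(m_1)^{\otimes r_\Lambda(m_2)}|_{\cS}$.

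For the case $m_1 = 0$: now $x = (0, x_2)$ with $x_2 \in R_\Lambda(m)$ exists as a global section over all of $\cS$, so $\varphi_\Lambda(\cS)$ lies entirely inside the support of the corresponding component; the naive Cartier pullback is thus undefined, and I instead pull back the associated line bundle. By Proposition \ref{prop:deformation of Z x interior} this line bundle is locally isomorphic to $\Omega^{\dia,-1}$, so by Proposition \ref{prop:pullback boundary}(1) its pullback is $\Omega^{-1}$; the defining obstruction section pulls back to the zero section, since $x_1 = 0$ contributes no obstruction and $x_2$ lifts tautologically via $T_\Lambda$. Summing over the $r_\Lambda(m)$ elements of $R_\Lambda(m)$ produces $(\Omega^{-1})^{\otimes r_\Lambda(m)}$, which agrees with the factor $\cZ^*(0)^{\otimes r_\Lambda(m)}$ by the convention of \S\ref{subsec:constant term}.

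The main obstacle is precisely the $m_1 = 0$ case: the ordinary cycle-theoretic pullback of Cartier divisors is ill-defined when the image of $\varphi_\Lambda$ lies inside the support, so one must pull back line bundles, and only the deformation-theoretic description of Section \ref{sec:deformation of special divisors} identifies this line bundle as $\Omega^{-1}$. This is precisely why the constant term in \eqref{eq: Z0} had to be defined via $\Omega$ rather than $\boldsymbol{\omega}$. Finally, one checks the edge cases directly: for $m < 0$ both sides are trivial line bundles (the sum on the right is empty and $r_\Lambda(m) = 0$), and for $m = 0$ both sides equal $\Omega^{-1}$ since $r_\Lambda(0) = 1$.
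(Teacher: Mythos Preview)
Your proposal is correct and follows essentially the same strategy as the paper: decompose special homomorphisms as $V(\mathbf A_S^\dia)\cong V(\mathbf A_S)\oplus\Lambda$, split the pullback into proper and improper pieces, and use the deformation theory of Section~\ref{sec:deformation of special divisors} to identify the improper contribution with $\Omega^{-1}$.

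The one noteworthy difference is in the proper case $m_1>0$. The paper argues more directly: the decomposition of special homomorphisms already gives an isomorphism of $\cS$-stacks
\[
\cZ^\dia(m)|_{\cS}\cong \bigsqcup_{\substack{m_1+m_2=m\\ m_1\neq 0\\ \lambda\in R_{\Lambda}(m_2)}}\cZ(m_1)\ \sqcup\ \bigsqcup_{\lambda\in R_\Lambda(m)}\cS,
\]
and after Zariski closure the proper part is literally $\coprod r_\Lambda(m_2)$ copies of $\cZ^*(m_1)$ as Cartier divisors, with no appeal to obstruction sections. Your route through Proposition~\ref{prop:deformation of Z x interior} for this case is valid but heavier than needed; the paper reserves the deformation-theoretic input (packaged there as Lemma~\ref{lem:restriction degenerate bundle}) solely for the improper components, where it is genuinely unavoidable.
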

\begin{proof}
The proof is similar to that of \cite[Proposition 6.6.3]{HP}.
If $m<0$, then both sides of \eqref{eq: pullback} are trivial line bundles. If $m=0$, then   \eqref{eq: pullback} is simply \eqref{eq: pullback of Omega} by  the definition of $\cZ^*(0)$ in \S \ref{subsec:constant term}. For any scheme $S\rightarrow \cS$,  let $\mathbf G_S$ be the pullback of the universal semi-abelian scheme over $\cS$, and $\mathbf G_S^\dia$ be the pullback of the universal semi-abelian scheme over $\cS^\dia$ via the morphism $S\rightarrow \cS\xrightarrow[]{\varphi_\Lambda} \cS^\dia$. Define the following $\Oo_{\kay}$-modules of special homomorphisms 
\[V(\mathbf G_S)=\Hom_{\Oo_{\kay}}(\mathbf{A}_{0,S},\mathbf G_S), \quad V(\mathbf G_S^\dia)=\Hom_{\Oo_{\kay}}(\mathbf{A}_{0,S},\mathbf G_S^\dia).\]
By the definition of $\varphi_\Lambda$, we know $\mathbf G_S^\dia=\mathbf G_S\oplus (\mathbf{A}_0\otimes_{\Oo_{\kay}}\Lambda)_S$. By \cite[Proposition 2]{AK}, we know that $\Hom_{\Oo_{\kay}}(\mathbf{A}_0,\mathbf{A}_0\otimes_{\Oo_{\kay}} \Lambda)\cong \Lambda$. Hence we have
\begin{equation}\label{eq:decomposition of speical homomorphisms}
    V(\mathbf G_S^\dia)\cong V(\mathbf G_S)\oplus \Lambda.
\end{equation}
Equation \eqref{eq:decomposition of speical homomorphisms} induces the following isomorphism of $\cS^*$-stacks.
\begin{equation}\label{eq:decomposition of restriction of Z dia}
    \cZ^{\dia,*}(m)|_{\cS^*}\cong \bigsqcup_{\substack{m_1+m_2=m\\ m_1\neq 0\\ \lambda\in R_{\Lambda}(m_2) }} \cZ^*(m_1) \sqcup \bigsqcup_{\lambda\in R_\Lambda(m)} \cS^*.
\end{equation}
There is a corresponding canonical decomposition of $\cS^{\dia,*}$-stack
\begin{equation}
    \cZ^{\dia,*}(m)=\cZ^\dia_{\mathrm{prop}}\sqcup \cZ^\dia_{\mathrm{im}},
\end{equation}
such that each connected component of $\cZ^\dia_{\mathrm{prop}}$ intersects the image of $\varphi_\Lambda$ properly and contributes to the first term on the right hand side of \eqref{eq:decomposition of restriction of Z dia}, while each connected component of $\cZ^\dia_{\mathrm{im}}$ contains (intersects ``improperly" with) a component of the image of $\varphi_\Lambda$ and contributes to the second term on the right hand side of \eqref{eq:decomposition of restriction of Z dia}. Notice that the first term on the right hand side of \eqref{eq:decomposition of restriction of Z dia} corresponds to exactly the first term of on the right hand side of \eqref{eq: pullback}. To prove the proposition, it suffices to prove the following lemma.
\end{proof}

\begin{lemma}\label{lem:restriction degenerate bundle}
Let $\cZ^\dia$ be a connected component of $\cZ^\dia_{\mathrm{im}}$ and $U^\dia$  a connected \'etale neighborhood of $\cS^{\dia,*}$ such that the morphism $\cZ^\dia|_{U^\dia}\rightarrow U^\dia$ is a closed immersion when restricted on each connected component of $\cZ^\dia|_{U^\dia}$.
Let $U$ be a connected \'etale neighborhood of $\cS^*$ whose image under $\varphi_\Lambda$ is contained in $\cZ^\dia|_{U^\dia}$. 
Then we have canonical isomorphism of line bundles
\[\cZ^\dia_{\mathrm{prop}}=\displaystyle\bigotimes_{\substack{m_1+m_2=m\\ m_1\neq 0}}\cZ^*(m_1)|_U^{\otimes r_{\Lambda}(m_2)},\]
and 
    \begin{align}\label{eq: adjunction}
    \Oo(\cZ^\dia)|_U\cong \Omega^{-1}|_U.
    \end{align}
\end{lemma}
\begin{proof}
The proof is identical to that of \cite[Theorem 7.10]{BHY} or \cite[Lemma 6.6.4]{HP}, given Proposition \ref{prop:deformation of Z x interior} and Proposition \ref{prop:deformation cS star}. We leave it to the reader.
\end{proof}

For $v>0$ and $m\geq 0$, define
\begin{equation}\label{eq:Bmv}
    \mathcal{B}(m,v)\coloneqq\frac{1}{4\pi v}\sum_{\Phi\in \mathrm{Cusp}([[L]])}\#\left\{x \in L_\Phi:( x, x)=m\right\} \cdot \mathcal{S}^*(\Phi)\in \mathrm{CH}^1_\C(\mathcal{S}^*),
\end{equation}
where $L_\Phi$ is the positive definite lattice in \eqref{eq: normal decom}. Then we define the total special divisor to be the following element in $\mathrm{CH}^1_\C(\mathcal{S}^*)$. 
\begin{equation}\label{eq:Z tot}
    \cZ^{\tot}(m,v)\coloneqq \begin{cases}
        \cZ^*(m)+\mathcal{B}(m,v)  & \text{ if } m>0\\
       \Omega^{-1}+\mathcal{B}(0,v) & \text{ if } m=0\\
       0 & \text{ if } m<0.
    \end{cases}
\end{equation}

\begin{theorem}\label{thm:pullback total}
Assume $m\in \Z$ and $v>0$. Then we have the following identity in $\mathrm{CH}^1(\mathcal{S}^*)$.
\begin{align}\label{eq: pullback II}
    \varphi_{\Lambda}^*(\cZ^{\mathrm{tot},\dia}(m,v))=\sum_{m_1+m_2=m} r_{\Lambda}(m_2)\cdot \cZ^{\mathrm{tot}}(m_1,v).
\end{align}
Notice here that the sum is a finite sum and is over $m_1, m_2 \ge 0$ by (\ref{eq:Z tot}).
\end{theorem}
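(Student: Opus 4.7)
The plan is to split $\cZ^{\tot,\dia}(m,v)$ into its algebraic interior part and its boundary part, pull each piece back under $\varphi_\Lambda$ separately, and recombine the results. Since $\varphi_\Lambda^*$ is linear on $\mathrm{CH}^1(\cS^{\dia,*})$, this reduction is clean, and both pieces are already controlled by the propositions established earlier in the section.

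For the interior part, I would directly invoke Proposition \ref{prop:pullback inside}. For $m > 0$ this gives, in $\mathrm{CH}^1(\cS^*)$,
\[
\varphi_\Lambda^*\cZ^{\dia,*}(m) = \sum_{\substack{m_1+m_2=m \\ m_1 \ne 0}} r_\Lambda(m_2)\,\cZ^*(m_1) + r_\Lambda(m)\,\Omega^{-1},
\]
where $\cZ^*(m_1)$ is taken to be $0$ for $m_1 < 0$ by (\ref{eq: Z0}). For $m = 0$, the required identity $\varphi_\Lambda^*(\Omega^\dia)^{-1} = \Omega^{-1}$ is precisely (\ref{eq: pullback of Omega}). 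For $m < 0$ there is nothing to check. The key feature to keep in mind is the extra summand $r_\Lambda(m)\,\Omega^{-1}$ produced by the improper intersection components in (\ref{eq:decomposition of restriction of Z dia}), as quantified by Lemma \ref{lem:restriction degenerate bundle}.

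For the boundary term, I would unfold the definition (\ref{eq:Bmv}) for $\mathcal{B}^\dia(m,v)$ and apply Proposition \ref{prop:pullback boundary}(2), which shows that only the cusp labels of the form $\Phi^\dia$ with $\Phi \in \mathrm{Cusp}([[L]])$ survive the pullback, each mapping to $\cS^*(\Phi)$. Given a normal decomposition $L = \mathfrak{m} \oplus \mathfrak{n} \obot L_\Phi$, the obvious decomposition $L^\dia = \mathfrak{m} \oplus \mathfrak{n} \obot (L_\Phi \obot \Lambda)$ identifies $L_{\Phi^\dia} = L_\Phi \obot \Lambda$. Splitting $x = x_1 + x_2 \in L_\Phi \obot \Lambda$ gives
\[
\#\{x \in L_{\Phi^\dia} : \langle x, x\rangle = m\} = \sum_{m_1 + m_2 = m} \#\{x_1 \in L_\Phi : \langle x_1, x_1\rangle = m_1\}\cdot r_\Lambda(m_2),
\]
so substituting into (\ref{eq:Bmv}) yields $\varphi_\Lambda^* \mathcal{B}^\dia(m,v) = \sum_{m_1+m_2=m} r_\Lambda(m_2)\,\mathcal{B}(m_1, v)$, with the convention $\mathcal{B}(m_1,v) = 0$ for $m_1 < 0$ (since $L_\Phi$ is positive definite).

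It remains to add the two contributions and match with the right-hand side of (\ref{eq: pullback II}). The essential bookkeeping point is that for $m > 0$ the summand $r_\Lambda(m)\,\Omega^{-1}$ produced by improper intersections combines with the $m_1 = 0$ term $r_\Lambda(m)\,\mathcal{B}(0,v)$ coming from the boundary pullback to form exactly $r_\Lambda(m)\,\cZ^{\tot}(0,v)$; the remaining $m_1 \ne 0$ terms group as $r_\Lambda(m-m_1)\bigl(\cZ^*(m_1) + \mathcal{B}(m_1,v)\bigr) = r_\Lambda(m-m_1)\,\cZ^{\tot}(m_1,v)$. The case $m = 0$ works out using $r_\Lambda(0) = 1$ together with (\ref{eq: pullback of Omega}), and $m < 0$ is vacuous. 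The main conceptual obstacle has already been overcome in Proposition \ref{prop:pullback inside} and Lemma \ref{lem:restriction degenerate bundle}, namely the realization that the line bundle $\Omega$ (as opposed to $\boldsymbol{\omega}$) is what governs improper intersections; granted this, the present proof reduces to the combinatorial reshuffling described above.
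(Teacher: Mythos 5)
Your proposal is correct and follows essentially the same route as the paper: the boundary contribution is computed by unfolding $\mathcal{B}^\dia(m,v)$ via Proposition \ref{prop:pullback boundary}(2) and the splitting $L_{\Phi^\dia}=L_\Phi\obot\Lambda$, and the interior contribution is exactly Proposition \ref{prop:pullback inside}, with the improper-intersection term $r_\Lambda(m)\,\Omega^{-1}$ absorbed into $r_\Lambda(m)\,\cZ^{\tot}(0,v)$. Your write-up merely makes explicit the final bookkeeping that the paper leaves to the reader.
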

\begin{proof}
By Proposition \ref{prop:pullback boundary}, we have 
\begin{align*}
   &\varphi_{\Lambda}^*(\mathcal{B}^\dia(m,v)) \\
   =& \frac{1}{4\pi v} \sum_{\Phi^\dia\in \mathrm{Cusp}([[L^\dia]])}  \#\left\{x \in L_{\Phi^\dia}:( x, x)=m\right\} \cdot  \varphi_{\Lambda}^*(\mathcal{S}^{\dia,*}(\Phi^\dia))\\
   =&  \frac{1}{4\pi v} \sum_{\Phi\in \mathrm{Cusp}([[L]])}  \#\left\{x \in L_\Phi\obot \Lambda:( x, x)=m\right\} \cdot  \mathcal{S}^*(\Phi) \\
   = &\frac{1}{4\pi v} \sum_{\Phi\in \mathrm{Cusp}([[L]])} 
 \sum_{m_1+m_2=m}  \#\left\{x \in  \Lambda:( x, x)=m_2\right\} \cdot \#\left\{x \in  L_\Phi:( x, x)=m_1\right\} \cdot\mathcal{S}^*(\Phi)\\
 =&\sum_{m_1+m_2=m}  \#\left\{x \in  \Lambda:( x, x)=m_2\right\} \cdot  \frac{1}{4\pi v} \sum_{\Phi\in \mathrm{Cusp}([[L]])} \#\left\{x \in  L_\Phi:( x, x)=m_1\right\} \cdot\mathcal{S}^*(\Phi)\\
 =& \sum_{m_1+m_2=m} r_{\Lambda}(m_2)\cdot \cB(m_1,v).
\end{align*}
Combining this with Proposition \ref{prop:pullback inside}, the theorem is proved.
\end{proof}

\section{Pullback of arithmetic special divisors} \label{sect:PullBack II}

\subsection{Arithmetic Chow groups}
In this subsection, we review the theory of arithmetic Chow groups formed from cycles with Green currents with certain log-log singularities along a fixed normal crossing divisor developed by \cite{BKK}. We follow \cite[\S 3.1]{Ho1} closely.

\begin{definition}
Let $M^*$ be a compact complex manifold of dimension $n-1, \partial M^* \subset$ $M^*$ be a smooth codimension one submanifold, $M=M^* \backslash \partial M^*$, and $z_0 \in \partial M^*$. Suppose that  on some open neighborhood $V \subset M^*$ of $z_0$, there are coordinates $q, u_1, \ldots, u_{n-2}$ such that $\partial M^*$ is the vanishing locus of $q$. After possibily shrinking $V$, we may always assume that $\log \left|q^{-1}\right|>1$ on $V$. Then we call the open set $V$ and its coordinates   \emph{adapted} to $\partial M^*$.
\end{definition}

\begin{definition}
    Suppose $f$ is a $C^{\infty}$ function on an open subset $U \subset M$. We say that $f$ has $\log$-log growth along $\partial M^*$ if around any point of $\partial M^*$ there is an open neighborhood $V \subset M^*$ and coordinates $q, u_1, \ldots, u_{n-2}$ adapted to $\partial M^*$ such that
\begin{align}\label{eq: log}
    f=O(\log \log \left|q^{-1}\right|)
\end{align}
on $U \cap V$. A smooth differential form $\omega$ on $U$ has log-log growth along $\partial M^*$ if around any point of $\partial M^*$ there is an open neighborhood $V \subset M^*$ and coordinates $q, u_1, \ldots, u_{n-2}$ adapted to $\partial M^*$ such that $\left.\omega\right|_{U \cap V}$ lies in the subring (of the ring of all smooth forms on $U \cap V$ ) generated by
$$
\frac{d q}{q \log |q|}, \frac{d \bar{q}}{\bar{q} \log |q|}, d u_1, \ldots, d u_{n-2}, d \bar{u}_1, \ldots, d \bar{u}_{n-2},
$$
and functions satisfying \eqref{eq: log}.
\end{definition}
The work of \cite{BKK} about arithmetic Chow group with log-log singularity along a normal crossing divisor is for a flat, regular and proper $\Oo_{\kay}$-scheme of finite type. Since $\cS^*$ is in fact a Deligne-Mumford stack, we need to adapt this theory to Deligne-Mumford stack accordingly following \cite{Ho1}.

In particular, we extend the notion of log-log growth to the orbifold fibers of $\mathcal{S}^*$ in the following way.
We can write  $\mathcal{S}^*(\mathbb{C})$ as the quotient of a complex manifold $M^*$ by the action of a finite group $H$. As a result, we can regard
$$\partial \mathcal{S}^*(\mathbb{C}) \rightarrow \mathcal{S}^*(\mathbb{C}) \leftarrow \mathcal{S}(\mathbb{C})$$
 as the quotients of $H$-invariant morphisms of complex manifolds
$$
\partial M^* \rightarrow M^* \leftarrow M
$$
with $H$-actions. Then we say a smooth form on $\cS^*(\mathbb{C})$ has log-log growth along the boundary $\partial \cS^*(\mathbb{C})$ if the corresponding $H$-invariant form on $M$ obtained via pullback has $\log$-log growth along $\partial M^*$.

Let   $\mathcal{Z}=\sum m_i \mathcal{Z}_i$ be a finite $\mathbb{C}$-linear combination of pairwise distinct irreducible closed substacks of codimension one where $m_i\in \C$, which is a divisor on $\cS^*$ with complex coefficients.
\begin{definition}\label{def: green function} 
A Green function for $\mathcal{Z}$ consists of a smooth  function $\operatorname{Gr}(\mathcal{Z}, \cdot)$ on $\mathcal{S}(\mathbb{C}) \backslash \mathcal{Z}(\mathbb{C})$ satisfying the following properties:
\begin{enumerate}
    \item For every point of $\cS^*(\mathbb{C})$, there is an open neighborhood $V$ and local equations $\psi_i(z)=0$ for the divisors $\mathcal{Z}_i(\mathbb{C})$ such that the function
$$
\mathcal{E}(z)=\operatorname{Gr}(\mathcal{Z}, z)+\sum_i m_i \log ||\psi_i(z)||^2
$$
on $V \cap(\mathcal{S}(\mathbb{C}) \backslash \mathcal{Z}(\mathbb{C}))$ extends smoothly to $V \cap \mathcal{S}(\mathbb{C})$.
\item The forms $\mathcal{E}, \partial \mathcal{E}, \bar{\partial} \mathcal{E}$, and $\partial \bar{\partial} \mathcal{E}$ on $V \cap \mathcal{S}(\mathbb{C})$ have log-log growth along $\partial \cS^*(\mathbb{C})$.
\end{enumerate}
\end{definition}

\begin{definition}
We define an arithmetic divisor on $\mathcal{S}^*$ to be a pair $(\mathcal{Z}, \operatorname{Gr}(\mathcal{Z}, \cdot))$, where $\mathcal{Z}$ is a divisor  on $\mathcal{S}^*$ with complex coefficients, and $\operatorname{Gr}(\mathcal{Z}, \cdot)$ is a Green function for $\mathcal{Z}$. 
 We define a principal arithmetic divisor to be
$$
\widehat{\operatorname{div}}(f)=\left(\operatorname{div}(f),-\log ||f||^2\right)
$$
for some rational  function $f$ on $\mathcal{S}^*$.
\end{definition}

Given the definition of arithmetic divisor and principal arithmetic divisor, we can define the first arithmetic Chow group as follows.
\begin{definition}
   We recall  the first arithmetic Chow group 
   \begin{align*}
 \widehat{\mathrm{CH}}^1\left(\mathcal{S}^*\right)=      \mathrm{Span}\{ \text{arithmetic divisors}\}/  \mathrm{Span}\{ \text{principal arithmetic divisors}\}.
   \end{align*}
\end{definition}
Accordingly, we define 
$\widehat{\mathrm{Pic}}(\cS^*)$ to be the isomorphism classes of metrized line bundles  $\widehat{\cL}=(\cL, ||\cdot ||)$,  where $\mathcal  L$ is a  line bundle on $\mathcal S^*$, and $\| \cdot \|$ is a Hermitian metric on $\mathcal L$ such that if we choose a rational section  $s$ of $\mathcal L$ and set $g  = - \log \| s \|^2$, then     $\widehat{\mathrm{Div} } (s) = ( \mathrm{Div}(s),  -\log \| s \|^2)\in  \widehat{\mathrm{CH}}^{1}(\cS^*)$.  
 This gives a natural isomorphism
\begin{equation}\label{eq:iso pic and chow}
   i: \widehat{\mathrm{Pic}}(\cS^*) \rightarrow  \widehat{\mathrm{CH}}^{1}(\cS^*).
\end{equation}
  The preimage of the class of $(\cZ,g)$ under $i$  is represented by 
\[(\Oo(\cZ),||\cdot ||), \]
where $-\log ||{\bf 1} ||^2=g$ with ${\bf 1}$ the canonical section of $\Oo(\cZ)$.  
For our purpose, we extend this isomorphism $\R$-linearly to 
\begin{align}\label{eq: pic ch}
    \widehat{\mathrm{Pic}}_\R(\cS^*) \cong  \widehat{\mathrm{CH}}_\R^{1}(\cS^*).
\end{align} 
On the Picard group side, we allow formally real power of a line bundle $\mathcal L^r$   with metric being a $r$-power of a metric on  $\mathcal L$. On the arithmetic side, we allow the $r$-multiple of an honest divisor. We need this to accommodate the arithmetic divisors $\widehat{\mathcal Z}^{\tot}(m, v)$.

Note that according to \cite[Proposition 7.5]{BKK}, we can define a pullback $\varphi_{\Lambda}^* : \widehat{\mathrm{Pic}}_\R(\cS^{\dia,*})\to \widehat{\mathrm{Pic}}_\R(\cS^{*})$ since $\varphi_{\Lambda}^{-1}(\partial \cS^{\dia,*})\subset \varphi_{\Lambda}^{-1}(\partial \cS^{*})$.

\subsection{Arithmetic special divisors}
The complex fiber of  $\mathcal{S}$ is of the form 
\begin{equation}\label{eq:complex uniformization}
    \mathcal{S}(\C)=\bigsqcup_{\mathrm{Cl}(\kay)}\bigsqcup_{L_j\in [[L]]} \Gamma_j \backslash \mathcal{D}, 
\end{equation}
where $\Gamma_j$ is the automorphism group of $L_j$. Each connected component $\Gamma_j \backslash \mathcal{D}$ is a complex orbifold.
For $z\in \mathcal D$ and $x\in V_\R$, define
\begin{equation}
    R(x,z)=-(\mathrm{pr}_{z}(x), \mathrm{pr}_{z}(x))\geq 0.
\end{equation}
Then the majorant hermitian form 
\[(x,x)_z:=(x,x)+2R(x,z) = (\mathrm{pr}_{z^\perp} (x) , \mathrm{pr}_{z^\perp} (x)) - (\mathrm{pr}_{z}(x), \mathrm{pr}_{z}(x)) \]
is positive definite.
Define
\begin{equation}\label{eq:D x}
  \mathcal{D}(x)=\{z\in \mathcal{D} \mid (z,x)=0\}=\{z\in \mathcal{D} \mid R(x,z)=0\}.  
\end{equation}
Then $\mathcal{D}(x)$ is nonempty if and only if $(x,x)>0$. 
Let $Z(x)_j$ to be the image of $\mathcal D(x)$ in  $\Gamma_j \backslash \mathcal{D}$.

Let $Z(m)=\cZ(m)(\C)$.
Then we know that (\cite[\S 3]{KR2}) for $m\in \Z_{>0}$, we have
\[Z(m)=\bigsqcup_{\mathrm{Cl}(\kay)}\bigsqcup_{L_j\in [[L]]} \bigsqcup_{ \substack{x\in L_j  \mod \Gamma_j \\ (x,x)=m }} Z(x)_j.\]
Define 
\[\beta_1(r)=\int_{1}^\infty e^{-rt} \frac{dt}{t}.\]
Then there is a power series expansion  
\begin{equation}\label{eq:Taylor expansion}
    \beta_1(r)+\log r=-\gamma-\sum_{k=1}^\infty \frac{(-1)^{k} }{k} \frac{r^k}{ k!},
\end{equation}
where $\gamma$ is the Euler's constant. In particular,  $\beta_1(r)$ has log singularity at $r=0$.  For $0 \ne x\in L_j  $ and $v>0$, we define the Kudla's Green function on $\Gamma_j\backslash D$
\begin{equation}
    \mathrm{Gr}(x,v)_j(z)= \beta_1(4 \pi v \cdot R(x,z)).
\end{equation}
Then  $\mathrm{Gr}(x,v)_j$ is a Green function for $Z(x)_j$ (c.f. \cite[\S 3]{howard2012complex}).
For $v>0$ and $m\in \Z$, define Kudla's Green function $\mathrm{Gr}(m,v)(z)$ such that on the component $\Gamma_j \backslash \mathcal D$ it is of the form
\begin{equation}
    \mathrm{Gr}(m,v)(z)|_{\Gamma_j \backslash \mathcal D}=\sum_{ \substack{x\in L_j\setminus \{0\} \\ (x,x)=m}}  \mathrm{Gr}(x,v)_j(z).
\end{equation}
Then $\mathrm{Gr}(m,v)$ is a Green function for $Z(m)$.
Note that $Z(m) =0$ and $\mathrm{Gr}(m, v)$ is smooth on $\mathcal D$ for $ m\le 0$. 
 The following theorem states that  $\Gr(m,v)$ is a Green function  of $\cZ^{\tot}(m).$
\begin{theorem}\cite[Theorem 3.7.4]{Ho1}\label{thm: boundary behavior of Gr}
 Let $m$ be a non-zero integer, and  $p$    be a complex point of  $\cS^*(\Phi)$ for some cusp label $\Phi$. There exists an open neighborhood $U \subset \mathcal{\cS}^*(\mathbb{C})$ of $p$ such that the smooth function
$$
\mathcal{E}(z)=\operatorname{Gr}(m, v)(z)+\log \left|\psi_m(z)\right|^2+\frac{\#\left\{x \in L_\Phi:( x, x)=m\right\}}{4 \pi v} \log |q(z)|^2
$$
on $U \setminus \cS^*(\Phi)(\mathbb{C})$ is bounded, and the differential forms $\partial \mathcal{E}, \bar{\partial} \mathcal{E}$, and $\partial \bar{\partial} \mathcal{E}$ have log-log growth along $\cS^*(\Phi)(\mathbb{C})$. Here $\psi_m(z)=0$ is a local equation for $\mathcal{Z}^*(m)(\mathbb{C})$, and $q(z)=0$ is a local equation for the boundary component $\cS^*(\Phi)(\mathbb{C})$. In particular, $\operatorname{Gr}(m, v)$ is a Green function of $\mathcal Z^{\mathrm{tot}}(m, v)$.
\end{theorem}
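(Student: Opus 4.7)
The plan is to work in an analytic chart near $p$ furnished by the uniformization $(\cC_\Phi^*)^\wedge_{\cB_\Phi} \cong (\cS^*)^\wedge_{\cS^*(\Phi)}$ from Theorem \ref{thm:toroidal compact}(3), and to analyze the defining sum of $\Gr(m,v)$ term by term using a normal decomposition $L_j = \mathfrak m \oplus \mathfrak n \obot L_\Phi$ coming from Lemma \ref{lem: eq cusp}. In such a chart one picks a coordinate $\tau$ on $\mathfrak m\otimes_\Z\R$ together with transversal coordinates $(u_1,\dots,u_{n-2})$ coming from $\cB_\Phi$, arranged so that $\mathrm{Im}(\tau)\to\infty$ corresponds to $z\to p$ and $q(z)=e^{2\pi i\tau/\lambda}$ is a local equation for $\cS^*(\Phi)(\C)$ for a suitable lattice parameter $\lambda$. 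The key input is then the behavior of $R(x,z)=-(\mathrm{pr}_z(x),\mathrm{pr}_z(x))$ as $z\to p$.

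Any $x\in L_j$ with $\langle x,x\rangle=m$ decomposes as $x=a+b+c$ with $a\in\mathfrak m$, $b\in\mathfrak n$, $c\in L_\Phi$, and a Siegel-domain computation gives three regimes. When $a\ne0$, the isotropic component forces $R(x,z)\gtrsim \mathrm{Im}(\tau)$, so $\beta_1(4\pi v R(x,z))$ decays exponentially and the sub-sum over such $x$ converges to a smooth bounded function extending across the cusp. When $a=0$ and $b\ne0$, $R(x,z)$ stays bounded near $p$ and vanishes exactly along the extension of $Z(x)_j$ to $\cS^*$; by Theorem \ref{thm:toroidal compact}(4) these are precisely the vectors whose cycle contributes to $\cZ^*(m)$, and they supply the $-\log|\psi_m(z)|^2$ singularity in $\mathcal E$. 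When $a=b=0$, so $x$ lies in the finite set $\{x\in L_\Phi:\langle x,x\rangle=m\}$, a direct calculation of the majorant yields
\[
R(x,z)=\frac{\langle x,x\rangle}{C\,\mathrm{Im}(\tau)}+O\!\bigl(\mathrm{Im}(\tau)^{-2}\bigr)
\]
for a fixed constant $C=C(\Phi,\lambda)$ depending only on the normalization.

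Plugging this estimate into the expansion \eqref{eq:Taylor expansion} gives $\beta_1(4\pi v R(x,z))=\log\mathrm{Im}(\tau)+O(1)$, and the normalization of $q$ converts $\log\mathrm{Im}(\tau)$ into $-\tfrac{1}{4\pi v}\log|q|^2$ up to a bounded function, so summing over the finite set of $x\in L_\Phi$ of norm $m$ recovers precisely the explicit coefficient of $\log|q|^2$ prescribed in the definition of $\mathcal E$. Combined with the $-\log|\psi_m|^2$ term from the middle regime and the smooth contribution from the $a\ne0$ regime, this shows that $\mathcal E$ extends to a bounded smooth function on $U$. For the differential forms one uses $\beta_1'(r)=-e^{-r}/r$, so that differentiating $\beta_1(4\pi v R)$ introduces $R^{-1}$; the only terms where $R\to 0$ at $\cS^*(\Phi)(\C)$ are those with $x\in L_\Phi$, for which $R^{-1}\sim\mathrm{Im}(\tau)\sim\log|q^{-1}|$, so the resulting derivative forms lie in the subring generated by $dq/(q\log|q^{-1}|)$, $d\bar q/(\bar q\log|q^{-1}|)$, $du_i$, $d\bar u_i$ and log-log-bounded functions, which is exactly the log-log growth condition of Definition \ref{def: green function}.

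The main obstacle will be justifying the uniform convergence of the infinite sum on compacta of the punctured chart — needed both to exchange the limit $z\to p$ with the summation and to differentiate term by term — given that infinitely many $x$ with $a\ne0$ contribute; this is controlled by the Gaussian-type decay of $\beta_1$ combined with the linear-in-$\mathrm{Im}(\tau)$ growth of $R$, but matching the precise numerical constant $1/(4\pi v)$ in front of $\log|q|^2$ requires careful bookkeeping of the Siegel-domain normalizations, which is the technical heart of \cite[Theorem 3.7.4]{Ho1} and where we would simply invoke Howard's computation.
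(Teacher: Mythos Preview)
The paper does not prove this statement; it is quoted as \cite[Theorem~3.7.4]{Ho1} without argument, so there is no in-paper proof to compare against, and you yourself defer to that reference at the end.

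That said, your outline contains a genuine error in the third regime which would prevent it from working. You assert that for $x\in L_\Phi$ the expansion \eqref{eq:Taylor expansion} gives $\beta_1(4\pi vR(x,z))=\log\mathrm{Im}(\tau)+O(1)$ and that ``the normalization of $q$ converts $\log\mathrm{Im}(\tau)$ into $-\tfrac{1}{4\pi v}\log|q|^2$.'' But $q=e^{2\pi i\tau/\lambda}$ gives $\mathrm{Im}(\tau)\sim\log|q^{-1}|$, hence $\log\mathrm{Im}(\tau)\sim\log\log|q^{-1}|$, which is log-log growth, not the $\log|q|^2$ singularity appearing in $\mathcal E$. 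A finite sum over $x\in L_\Phi$ therefore cannot account for the boundary term.

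The actual source of the $\log|q|^2$ divergence is an \emph{infinite} lattice sum. For each $c\in L_\Phi$ with $\langle c,c\rangle=m$, every $x=a+c$ with $a\in\mathfrak m$ still satisfies $\langle x,x\rangle=m$, and in Siegel coordinates $R(a+c,z)\asymp|\alpha+\langle c,w\rangle|^2/\mathrm{Im}(\tau)$; summing $\beta_1(4\pi vR)$ over the rank-two lattice $\mathfrak m$ (or Poisson-summing) yields a contribution growing linearly in $\mathrm{Im}(\tau)\sim\log|q^{-1}|$, with precisely the coefficient $\#\{x\in L_\Phi:\langle x,x\rangle=m\}/(4\pi v)$. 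The logarithmic singularities of $\beta_1$ along the zero locus of $R(a+c,z)$ inside this same family give $-\log|\psi_m|^2$. Your case split also has $\mathfrak m$ and $\mathfrak n$ interchanged: it is the nonvanishing of the $\mathfrak n$-component, not the $\mathfrak m$-component, that forces $R(x,z)\to\infty$ and hence exponential decay.
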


By Theorem \ref{thm: boundary behavior of Gr}, we have a well-defined arithmetic Chow divisor for integers $m \ne 0$
\begin{equation}
  \widehat{\cZ}^{\tot}(m,v)\coloneqq (\cZ^{\tot}(m,v),\Gr(m,v)) \in \widehat{\CH}_\R^1(\cS^*).
\end{equation}
It is an arithmetic divisor with the Kudla Green function, denoted by $\widehat{\cZ}_K^{\tot}(m,v)$ in the introduction. We will add subscript $K$ when needing to distinguish it from the arithmetic divisors $\widehat{\mathcal Z}_B^{\tot}(m, v)$ with the Bruinier Green functions

Now we deal with the case $m=0$. Recall that on $\mathcal{S}(\C)$ the two line bundles $\Omega$ and $\omega$ agree, and can be identified the line bundle (still denoted by $\Omega$)  of modular forms of weight $1$ on $\cS(\C)$  by \cite[Proposition 6.2]{BHY}. Moreover, this line bundle is descended from the tautological line bundle on $\mathcal D$, again denoted by $\Omega$. Let $s$ be a section of $\Omega$ and for $z\in \cD$, we let $s_z \in z \subset V_\R$ denote the value of $s$ at  $z$.  Then we can define a metric on $\Omega$ by setting
\begin{equation}\label{eq: metric of Omega}
 \| s_z \|^2 = - 4 \pi v e^\gamma (s_z, s_z).
\end{equation}
Here $\gamma$ is the Euler constant and $v>0$ is an extra parameter (eventually imaginary part of modular variable $\tau \in \mathbb H$).  Compared to the nomalization in \cite[Page 34]{ES}, we add a $v$-factor to count the arithmetic divisor $(0, \log v) $ there.     This gives an element $\widehat{\Omega}^{-1}  \in \widehat{\hbox{Pic}}(\cS)$ that depends on an extra factor $v$. By \cite[Proposition 6.3]{BHY}, this element extends naturally to an element, still denoted by $\widehat{\Omega}^{-1}$ in $  \widehat{\mathrm{Pic}}(\cS^*) \hookrightarrow \widehat{\mathrm{CH}}_\R^1(\cS^*) $.

Following \cite[Page 34]{ES}, we define $\widehat{\cZ}^\tot(0,v)$ as 
\begin{equation}
    \widehat{\cZ}^\tot(0,v)=\widehat{\Omega}^{-1}+(\cB(0,v),\Gr(0,v))\in\widehat{\mathrm{CH}}_\R^1(\mathcal{S}^*),
\end{equation}
where $\cB(0,v)$ is defined in \eqref{eq:Bmv}.

\subsection{Pullback formulas for arithmetic divisors and Proof of Theorem  \ref{theo:Decomposition}}
As in \S \ref{sec:pullback I}, we use the superscript $\dia$ to denote objects on $\mathcal{S}^\dia$ or $\mathcal{S}^{\dia,*}$. The aim of this subsection is to prove   Theorem  \ref{theo:Decomposition}, which is equivalent to the following theorem.

\begin{theorem}\label{thm:pullback final}
    Assume $m\in \Z$ and $v>0$. Then the following equation 
\begin{align}\label{eq: pullback IV}
    \varphi_{\Lambda}^*(\widehat{\cZ}^{\dia,\tot}(m,v))=\sum_{m_1+m_2=m} r_{\Lambda}(m_2)\cdot \widehat{\cZ}^\tot(m_1,v)
\end{align}
holds in $\widehat{\mathrm{CH}}^1_\R(\mathcal{S}^*)$.
\end{theorem}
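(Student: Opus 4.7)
The plan is to combine the algebraic pullback Theorem \ref{thm:pullback total} with an explicit analytic computation of the Green-function pullback. Under the isomorphism \eqref{eq:iso pic and chow}, both sides of \eqref{eq: pullback IV} represent metrized line bundles whose underlying line bundles already agree by Theorem \ref{thm:pullback total}; what remains is to check that the pulled-back metric on the LHS agrees with the product of metrics on the RHS modulo principal arithmetic divisors.

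The key geometric input is that $\varphi_\Lambda$ is induced by the isometric inclusion of hermitian spaces $V\hookrightarrow V^\dia = V\obot\Lambda_\kay$, sending a negative $\C$-line $z\subset V_\R$ to the same line viewed inside $V^\dia_\R$. Since $\Lambda_\R$ is positive-definite, the orthogonal projection onto $z$ of $x=(x_1,x_2)\in L^\dia_j = L_j\obot\Lambda$ depends only on $x_1$, so $R(x,\varphi_\Lambda(z)) = R(x_1,z)$. Substituting into Kudla's Green function and regrouping by $m_1 = (x_1,x_1)$, $m_2 = (x_2,x_2)$ yields, on $\cS\setminus\varphi_\Lambda^{-1}(\cZ^\dia(m))$,
\[
\varphi_\Lambda^*\Gr^\dia(m,v)(z) \;=\; \sum_{m_1\in\Z} r_\Lambda(m-m_1)\,\Gr(m_1,v)(z) \;+\; \sum_{\substack{x_2\in\Lambda\setminus\{0\}\\ (x_2,x_2)=m}}\beta_1(0),
\]
where the formally divergent second sum (the improper contribution from $x = (0,x_2)$) has $r_\Lambda(m)$ terms. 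These are precisely the improper components producing the summand $r_\Lambda(m)\,\Omega^{-1}$ in Proposition \ref{prop:pullback inside} via Lemma \ref{lem:restriction degenerate bundle}; the case $m\leq 0$ is simpler because positive-definiteness of $\Lambda$ forces no improper contribution.

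To interpret each improper contribution, fix $x = (0,x_2)$ and view $(x,\cdot)$ as a section of $\Omega^{\dia,-1}$ cutting out $\cZ^\dia(x)$. A direct computation from \eqref{eq: metric of Omega} gives $\|(x,\cdot)\|_w^2 = R(x,w)/(4\pi v\,e^\gamma)$; writing the Taylor expansion \eqref{eq:Taylor expansion} as $\beta_1(r) = -\log r - \gamma + \tilde\phi(r)$ with $\tilde\phi$ smooth and $\tilde\phi(0)=0$, the ratio of the metric on $\Oo(\cZ^\dia(x))$ induced by the Green function $\beta_1(4\pi v R(x,\cdot))$ to the Kudla-Petersson metric on $\Omega^{\dia,-1}$ under the isomorphism $\mathbf{1}\leftrightarrow (x,\cdot)$ equals $(4\pi v)^2 e^{2\gamma}\cdot e^{-\tilde\phi(4\pi v R(x,w))}$. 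Pulling back to $\cS$, where $R(x,\varphi_\Lambda(z)) = 0$, collapses this ratio to the nonzero \emph{constant} $(4\pi v)^2 e^{2\gamma}$; a constant rescaling of the metric shifts the class in $\widehat{\CH}^1_\C(\cS^*)$ by the principal arithmetic divisor $\widehat{\mathrm{div}}(4\pi v\,e^\gamma)$, which is zero. Combined with \eqref{eq: pullback of Omega} and the fact that the Kudla-Petersson formula on $\Omega$ is the restriction of that on $\Omega^\dia$, this identifies $\varphi_\Lambda^*(\Oo(\cZ^\dia(x)),\beta_1(4\pi v R(x,\cdot))) = \widehat{\Omega}^{-1}$; summing over the $r_\Lambda(m)$ improper vectors yields $r_\Lambda(m)\,\widehat{\Omega}^{-1}$.

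The remaining boundary contributions are controlled by Proposition \ref{prop:pullback boundary}(2) together with the counting identity $\#\{x\in L_{\Phi^\dia}:(x,x)=m\} = \sum_{m_1+m_2=m} \#\{x_1\in L_\Phi:(x_1,x_1)=m_1\}\cdot r_\Lambda(m_2)$ already used in Theorem \ref{thm:pullback total}, yielding $\varphi_\Lambda^*\cB^\dia(m,v) = \sum_{m_1+m_2=m} r_\Lambda(m_2)\cB(m_1,v)$; the log-log boundary behavior of $\Gr^\dia(m,v)$ from Theorem \ref{thm: boundary behavior of Gr} transfers term-by-term to each $\Gr(m_1,v)$ on $\cS^*$. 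The main obstacle is the matching step in the third paragraph: one must verify that the pulled-back metric ratio is genuinely constant along $\varphi_\Lambda(\cS)$, rather than only matching at leading order. This hinges on the precise calibration of the factor $4\pi v\,e^\gamma$ in \eqref{eq: metric of Omega}, designed exactly so that $\tilde\phi(0)=0$ and hence the non-constant part of the ratio vanishes identically on $\varphi_\Lambda(\cS)$; one must also confirm that the resulting Green currents have log-log growth along $\partial\cS^*$ compatible with the framework of \cite{BKK}.
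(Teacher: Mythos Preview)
Your approach is essentially the same as the paper's: reduce via Theorem~\ref{thm:pullback total} to matching metrics on $\cS(\C)$, use $R(x,z)=R(x_1,z)$ for the proper components, and for each improper vector $x=(0,x_2)$ compare the Green metric on $\Oo(\cD^\dia(x))$ with the Kudla metric on $\Omega^{\dia,-1}$ under the identification $\mathbf{1}\leftrightarrow s_x=(x,\cdot)$ coming from Lemma~\ref{lem:restriction degenerate bundle}. The paper carries out exactly this computation.

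There is, however, a genuine error in your handling of the leftover constant. You assert that the residual factor $(4\pi v\, e^\gamma)^2$ contributes the principal arithmetic divisor $\widehat{\mathrm{div}}(4\pi v\, e^\gamma)$, which is zero. This is false: $4\pi v\, e^\gamma$ is not a rational function on the $\Oo_\kay$-stack $\cS^*$, and the class $(0,c)$ for a nonzero real constant $c$ is \emph{nontrivial} in $\widehat{\CH}^1_\C(\cS^*)$ (it has nonzero arithmetic degree, for instance). The whole purpose of the normalizing factor $4\pi v\, e^\gamma$ in \eqref{eq: metric of Omega} is to make the restricted metric ratio \emph{exactly} $1$, not merely constant; your final remark misidentifies its role, since $\tilde\phi(0)=0$ is automatic from the expansion \eqref{eq:Taylor expansion} and has nothing to do with the constant $4\pi v\, e^\gamma$. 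Recheck the dual-metric computation of $\|s_x\|$ (one of the two displayed formulas---yours or the paper's for $-\log\|s_{x,z}\|^2$---has $4\pi v\, e^\gamma$ on the wrong side) and verify directly that $\|\mathbf{1}\otimes s_x^{-1}\|=1$ on $\cD$, as the paper does.
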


The right-hand side is an infinite sum and needs a little explanation as $m_1$ can be negative.  It is equal to 
$$
r_\Lambda(m) \widehat{\mathcal Z}(0,\mu)  + (\sum_{\substack{m_1 + m_2 =m\\ m_1>0}} r_\Lambda(m_2) \mathcal Z^{\tot}(m_1, v),   \sum_{\substack{m_1 + m_2 =m\\ m_1\ne 0}} r_\Lambda(m_2) G(m_1, v)).
$$
The sum over cycles is finite as $\mathcal Z^{\tot}(m_1, v)=0$ for $m_1 <0$,  and the sum of Green functions is a convergent sum. 
\begin{proof}    
We regard a Green function as a metric on the corresponding line bundle via \eqref{eq: pic ch} and reduce the improper pullback to a proper pullback following an idea in \cite[\S 7.4]{BHY}.  Let $\widehat{\cL}_m^{\dia,\tot}$ be the metrized  line bundle associated to $\widehat{\cZ}^{\dia,\tot}(m,v)$ with canonical section $\mathbf 1_m^\dia$ such that
$$
-\log \| \mathbf 1_m^\dia(z)\|^2 = G^\dia(m, v)(z) 
$$
for all $z \in  \mathcal D^\dia -Z^{\tot}(m, v)$. We use the notations $\widehat{\mathcal L}_m^{\tot}$ and $\mathbf 1_m$ similarly. Recall that $\widehat{\Omega}^\dia$ (resp. $ \widehat{\Omega}$) is equipped with the natural metric such that
$$
 \| s_z \|^2 = - 4 \pi v e^\gamma (s_z, s_z),
$$
and 
\begin{align}\label{eq: pullback of Omega}
    \varphi_\Lambda^*(\widehat{\Omega}^\dia)  =\widehat{\Omega}.
\end{align}
Here by $\varphi_{\Lambda}^*$ we mean pullback as metrized line bundle, which is always defined.
Finally, let $\widehat{\mathcal L}_0^B$ be the metrized line bundle associated to $(\cB(0,v),\Gr(0,v))$ with a canonical section $\mathbf 1_0^B$ such that 
$$
-\log \| \mathbf 1_0^B(z) \|= G(0, v)(z).
$$
Now by \eqref{eq: pullback of Omega}, the desired identity is equivalent to 
\begin{align}\label{eq: eq 1 thm 6.7}
    \varphi_{\Lambda}^*(\widehat{\cL}_m^{\dia,\mathrm{tot}}\otimes (\widehat{\Omega}^{\dia})^{\otimes r_\Lambda(m)})
    =(\widehat{\mathcal L}_0^B)^{\otimes r_\Lambda(m)} \otimes \bigotimes_{\substack{m_1+m_2=m,\\m_1\neq 0}}  (\widehat{\cL}_{m_1}^{\mathrm{tot}})^{\otimes r_{\Lambda}(m_2)}.
\end{align}
By Theorem \ref{thm:pullback total}, it holds at the line bundle level. So it suffices to prove that it holds for the metrics over $\cS$ (all the metrics on $\cS^*$ are natural extension of those on  $\cS$).

On the right hand side, the natural rational  section 
$$
s_{\mathrm{nice}}= (\mathbf 1_0^B)^{r_{\Lambda}(m)} \otimes   
\bigotimes_{\substack{m_1+m_2=m,\\m_1\neq 0}}  \mathbf 1_{m_1}^{\otimes r_{\Lambda}(m_2)}
$$ has metric 
\begin{equation} \label{eq:nice}
-\log \| s_{\mathrm{nice}}(y)\|^2 = r_\Lambda(m) G(0, v)(y) + \sum_{\substack{ m_1 +m_2 =m, \\ m_1 \ne 0} } r_\Lambda(m_2) \Gr(m_1, v)(y)
\end{equation}
for $y \in  \mathcal D$.

On the left hand side, for $z \in  \mathcal D^\dia -\mathcal D$,
\begin{align*}
&-\log \| \mathbf 1_m^\dia(z)\|^2
=\sum_{\substack{0\ne x_1 + x_2 =x \in L^\dia \\
       x_1 \in L,  x_2 \in  \Lambda \\
       Q(x) =m}}
 \Gr^\dia(x, v) (z)
 \\
&=\sum_{ \substack{0 \ne x \in \Lambda \\ Q(x) =m}}
\beta_1(4 \pi v R^\dia(x, z) )
+ \sum_{\substack{0 \ne  x_1 \in L, Q(x_1) =0   \\ x_2 \in \Lambda,  \, Q(x_2) =m
}} \Gr^\dia(x_1  +x_2, v) (z)
+ \sum_{\substack{m_1 +m_2  =m  \\
       x_1 \in L,  \,  Q(x_1)=m_1 \ne 0  \\
       x_2 \in  \Lambda, \, Q(x_2) =m_2}}
 \Gr^\dia(x_1 +x_2, v) (z).
 \end{align*}
Recall that by \eqref{eq:Taylor expansion} we have
$$
 \beta_1(4 \pi v R^\diamond(x, z) ) =
  -\gamma - \log (4 \pi v R^\diamond(x, z))
    + f_x(z) 
$$
 where  
$$
f_x(z) = -\sum_{k=1}^{\infty}\frac{(-1)^k}{k} \frac{(4\pi v\cdot R^\diamond(x,z))^k}{k!}. 
$$
for all $z \in  \mathcal D^\dia$. Notice that $f_x(z) =0$ for $z \in \mathcal D$. 

For  $0\ne x \in \Lambda$, we have  $(x, x)>0$ and we can define a local  section $s_x$ of  $\Omega^{-1}$ over $\mathcal D^\dia$ so that  
over a point $z \in \cD^\dia$, $s_{x, z}$ is characterized by
$$
s_{x, z}(a) =(\mathrm{pr}_z(x), a) \quad \text{ for } a \in z.
$$
According to \eqref{eq: metric of Omega}, we have
\begin{equation}
    - \log \| s_{x, z}\|^2 = -\gamma - \log (4 \pi v R^\diamond(x, z)).
\end{equation}
So we have a rational section $s$ of $\Omega^{\otimes - r_\Lambda(m)}$ over $S$ with 
$$
s(z) = \sum_{\substack{0\ne x \in \Lambda\\ (x, x)=m} } s_{x, z}
$$
and 
$$
-\log \| s(z) \|^2 = \sum_{\substack{0\ne x \in \Lambda\\ (x, x)=m} } (-\gamma - \log (4 \pi v R^\diamond(x, z)).
$$
As in \cite[\S 7.4]{BHY}, we have an analytic analogue of \eqref{eq: adjunction}. More precisely, for $x\in \Lambda$, we have an isomorphism
\begin{align}\label{eq: analytic adjunction}
    \Oo_{Z(x)}|_{\mathcal{D}}\cong \Omega^{-1}
\end{align}
so that the section $s_x$ of $\Omega^{-1}$ corresponds to the canonical section of $\Oo_{Z(x)}|_{\mathcal{D}}$. 
Moreover, by the same proof of \cite[Theorem 7.12]{BHY}, we have a section $s_m$ of $\Omega^{-1}$ on an \'etale neighborhood $U$ of $\cS^*$ that corresponds to the canonical section of $\Oo_{\cZ(m)}|_U$ under \eqref{eq: adjunction} and agrees with $s=\sum_{\substack{0\ne x \in \Lambda\\ (x, x)=m} } s_{x}$ on $\cD$. 
So we have a `proper' rational  section  $s_{\mathrm{prop}}=\mathbf 1_m^\diamond \otimes s^{-1}$  of  $\mathcal L_m \otimes \Omega^{\otimes  r_\Lambda(m)}$ that corresponds to $s_{\mathrm{nice}}$ under \eqref{eq: eq 1 thm 6.7} and has metric 
\begin{align*}
&-\log\| s_{\mathrm{prop}} (z)\|^2=
  -\log \| 1_m^\diamond(z)\|^2  + \log \| s(z) \|^2 
\\
&= \sum_{ \substack{0 \ne x \in \Lambda \\ Q(x) =m}}
f_x(z)+\sum_{\substack{0 \ne  x_1 \in L, Q(x_1) =0   \\ x_2 \in \Lambda,  \, Q(x_2) =m
}} \Gr^\dia(x_1  +x_2, v) (z)
+ \sum_{\substack{m_1 +m_2  =m  \\
       x_1 \in L,  \,  Q(x_1)=m_1 \ne 0  \\
       x_2 \in  \Lambda, \, Q(x_2) =m_2}}
 \Gr^\dia(x_1 +x_2, v) (z).
\end{align*}
Now for any $y \in \mathcal D$, let $z \in \mathcal D^\dia -\mathcal D$ goes to $y$,  the right hand side has limit, and  since  $f_x(z) =0$ for $z \in \mathcal D$, we have  
\begin{align*}
&-\log\| s_{\mathrm{prop}} (y)\|^2  
\\
&=\sum_{\substack{0 \ne  x_1 \in L, Q(x_1) =0   \\ x_2 \in \Lambda,  \, Q(x_2) =m
}} \Gr^\dia(x_1 , v) (y)
+ \sum_{\substack{m_1 +m_2  =m  \\
       x_1 \in L,  \,  Q(x_1)=m_1 \ne 0  \\
       x_2 \in  \Lambda, \, Q(x_2) =m_2}}
 \Gr^\dia(x_1 , v) (y)
 \\
 &=- \log\| s_{\mathrm{nice}}(y) \|^2
\end{align*}
by (\ref{eq:nice}) as desired.  This proves the theorem.

\end{proof}

\section{Modularity on unitary Shimura curves} \label{sect:Modularity}
\subsection{Eisenstein Series} In this  subsection, we temporarily let 
 $L$ be  a unimodular positive definite Hermitian lattice over $\Oo_{\kay}$ of rank $\ell$.
Let $\psi=\prod_p \psi_p$ be the canonical additive character of $\Q \backslash \A$ with $\psi_\infty(x) = e(x) =e^{ 2 \pi i x}$. Associated to it is a Weil representation $\omega=\omega_{L, \psi}$  of $\hbox{SL}_2(\A) \subset U(1, 1)(\A)$ on $S(V_\A)$ with $V=L_\Q= L \otimes_\Z \Q$, and a $\SL_2(\A)$-equivariant ``Rallis" map ($s_\ell=\ell -1$)
$$
\lambda: \, S(V_\A) \rightarrow I(s_\ell, \chi), \quad \lambda(\phi)(g) =\omega(g) \phi(0).
$$
Here $\chi=\chi_{-D}^\ell$. Let $\phi_L=\cha(\widehat{L}) \otimes \phi_\infty \in S(V_\A)$ with $\phi_\infty(x) =e^{-\pi (x, x)}$, and let $\Phi=\Phi_L=\prod_{p \le \infty} \Phi_p$ be the standard section in $I(s, \chi)$ such that $\Phi_L(g, s_\ell) = \lambda(\phi_L) (g)$. Then it is known that $\Phi_\infty =\gamma(V_\infty) \Phi_\infty^\ell$ is the standard section of weight $\ell$ (up to a scalar multiplication by a local Weil index $\gamma(V_\infty)$): 
$$
\Phi_\infty^\ell(n(b) m(a) k_\theta, s) = \chi_\infty(a)|a|^{s+1} e^{i \ell \theta}, 
$$
where 
$$
n(b) =\kzxz {1} {b} {0} {1},  \quad m(a) =\kzxz {a} {0} {0} {a^{-1}}, \quad \hbox{ and } \quad k_\theta =\kzxz {\cos \theta} {\sin \theta} {-\sin \theta} {\cos \theta}.
$$
Let  (here $g_\tau = n(u)m(\sqrt v)$ for $\tau= u +i v \in \H$)
\begin{equation}
E_L(\tau, s) = v^{-\ell/2} \sum_{\gamma \in B\backslash \SL_2(\Q)} \Phi_L(\gamma g_\tau, s),  \quad  
\end{equation}
be the associated Eisenstein series of weight $\ell$ (level $D$ and character $\chi$), which depends only on the genus $[[L]]$ of $L$. Here 
$$
B=NM=\{ n(b) m(a):\, a \in \Q^\times, b \in \Q\}.
$$
Let $\Gamma_\infty =B \cap \SL_2(\Z)$, then $B \backslash \SL_2(\Q) = \Gamma_\infty \backslash \SL_2(\Z)$.

For $\gamma=\kabcd \in \SL_2(\Q)$, let $\gamma_p$ be its image in $\SL_2(\Q_p)$. We first record the following standard facts as lemmas and a theorem (see for example \cite{YaValue},  \cite{KRYtiny}, \cite{KYEisenstein}). They are needed in the next few subsections. 

\begin{lemma} For $\gamma =\kabcd \in \SL_2(\R)$, we have 
$$
\Phi_\infty^\ell(\gamma g_\tau, s) =v^{\frac{s+1}2} j(\gamma, \tau)^{-\ell} |j(\gamma, \tau)|^{\ell-1 -s}  .
$$
Here $j(\gamma, \tau) =c \tau +d$. In  particular, 
$$
\Phi_\infty^\ell(\gamma g_\tau, s_\ell) = v^{\frac{\ell}2}  j(\gamma, \tau)^{-\ell}.
$$
\end{lemma}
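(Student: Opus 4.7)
The plan is to reduce to the defining transformation law of $\Phi_\infty^\ell$ via the Iwasawa decomposition. Recall that $g_\tau = n(u) m(\sqrt{v})$, so $g_\tau \cdot i = \tau$ and $j(g_\tau, i) = 1/\sqrt{v}$. First I would write $\gamma g_\tau = n(b') m(a') k_\theta$ for suitable real numbers $b', a', \theta$ (existence by Iwasawa on $\SL_2(\R)$), and read off $\Phi_\infty^\ell(\gamma g_\tau, s) = \chi_\infty(a') |a'|^{s+1} e^{i\ell\theta}$ from the definition. Everything then reduces to computing $|a'|$ and $e^{i\ell\theta}$ in terms of $j(\gamma,\tau)$ and $v$.

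For $|a'|$, apply both sides to $i \in \H$: since $k_\theta$ fixes $i$ and $n(b')m(a')\cdot i = b' + (a')^2 i$, the imaginary part of $\gamma g_\tau \cdot i = \gamma \tau$ gives $(a')^2 = \Im(\gamma\tau) = v/|j(\gamma,\tau)|^2$, hence $|a'| = \sqrt{v}/|j(\gamma,\tau)|$. For $e^{i\ell\theta}$, use the cocycle identity for the factor of automorphy: $j(\gamma g_\tau, i) = j(\gamma,\tau) j(g_\tau, i) = j(\gamma,\tau)/\sqrt{v}$, while on the other hand $j(n(b')m(a')k_\theta, i) = a'^{-1} j(k_\theta, i) = e^{-i\theta}/a'$ (using $j(k_\theta,i) = e^{-i\theta}$). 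Equating these yields $e^{-i\theta} = a' j(\gamma,\tau)/\sqrt{v}$, so $e^{i\ell\theta} = \operatorname{sgn}(a')^\ell |j(\gamma,\tau)|^\ell / j(\gamma,\tau)^\ell$.

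Substituting these into $\Phi_\infty^\ell(\gamma g_\tau, s) = \chi_\infty(a') |a'|^{s+1} e^{i\ell\theta}$ and using $\chi_\infty(a') = \operatorname{sgn}(a')^\ell$ (the sign characters cancel cleanly), I get
\[
\Phi_\infty^\ell(\gamma g_\tau, s) = \left(\frac{\sqrt{v}}{|j(\gamma,\tau)|}\right)^{s+1} \cdot \frac{|j(\gamma,\tau)|^\ell}{j(\gamma,\tau)^\ell} = v^{(s+1)/2}\, j(\gamma,\tau)^{-\ell}\, |j(\gamma,\tau)|^{\ell - 1 - s},
\]
as claimed. Setting $s = s_0 = \ell - 1$ makes the absolute value factor disappear, giving the stated specialization.

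There is no real obstacle here; the only subtle point is the sign bookkeeping, namely verifying that the conventional normalization $\chi_\infty(a) = \operatorname{sgn}(a)^\ell$ matches the sign $\operatorname{sgn}(a')^\ell$ produced by the computation of $e^{i\ell\theta}$, so that the final formula is sign-free. Once that is checked, the proof is essentially a one-line Iwasawa decomposition.
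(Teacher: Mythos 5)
Your proof is correct. The paper records this lemma as a standard fact and gives no proof (it cites \cite{YaValue}, \cite{KRYtiny}, \cite{KYEisenstein}), and your Iwasawa-decomposition computation --- extracting $|a'|=\sqrt{v}/|j(\gamma,\tau)|$ from $\Im(\gamma\tau)$ and $e^{i\ell\theta}$ from the cocycle identity, with the sign characters cancelling against $\chi_\infty(a')=\operatorname{sgn}(a')^\ell$ --- is exactly the standard verification those references carry out.
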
 

\begin{lemma} Let $\gamma=\kabcd \in \SL_2(\Z_p)$.

\begin{enumerate}
\item When  $p \nmid D$, $\Phi_p(\gamma, s_\ell) =1$ and $\Phi_p$ is right $\SL_2(\Z_p)$-invariant.

\item When  $p \mid D$ is odd, we have 
$$
\Phi_p(\gamma, s_\ell) = \begin{cases} 
     \chi_{p} (a) &\hbox{if }  p|  c,
     \\
     \gamma(V_p)\chi_{p}(c)  p^{-\ell/2} &\hbox{if } p \nmid c.
     \end{cases}
$$
Moreover, $\Phi_p(g \gamma) =\Phi_p(g) \chi_{p}(a)$ for $p|c$, and  $\gamma(V_p)$ is the local Weil index (a root of unity).
\end{enumerate} 
\end{lemma}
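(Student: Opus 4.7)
The plan is to unwind the definition $\Phi_p(g, s_0) = [\omega(g)\phi_p](0)$, with $\phi_p = \cha(L_p)$ and $\omega$ the restriction to $\SL_2(\Q_p)$ of the local Weil representation on $S(V_p)$, where $V_p = L_p \otimes_{\Z_p} \Q_p$ is regarded as a $\Q_p$-quadratic space of dimension $2\ell$ via $Q(x) = (x, x)$.  Both parts of the lemma then reduce to tracking the $\SL_2(\Z_p)$-action on $\phi_p$ via the three generator formulas
\begin{align*}
\omega(n(b))\phi(x) &= \psi_p(b\, Q(x))\,\phi(x), \\
\omega(m(a))\phi(x) &= \chi_p(a)\,|a|_p^{\ell}\,\phi(ax), \\
\omega(w)\phi &= \gamma(V_p)\,\widehat{\phi},
\end{align*}
with $\widehat{\phi}$ the Fourier transform with respect to the self-dual Haar measure for $B_Q(x,y) = (x,y) + (y,x)$.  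The essential input is the $B_Q$-dual lattice $L_p^\vee$ and the volume $\mathrm{vol}(L_p)$: when $p \nmid D$ the extension $\kay_p/\Q_p$ is unramified, so $L_p^\vee = L_p$ and $\mathrm{vol}(L_p) = 1$; when $p \mid D$ the different is $(\pi)$ for a uniformizer $\pi$ of $\Oo_{\kay,p}$, and a short computation gives $L_p^\vee = \pi^{-1}L_p$ with $[L_p^\vee : L_p] = p^\ell$, hence $\mathrm{vol}(L_p) = p^{-\ell/2}$ and $\omega(w)\phi_p = \gamma(V_p)\, p^{-\ell/2}\, \cha(\pi^{-1}L_p)$.

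Part (1) is then immediate: for $p \nmid D$ the character $\chi_p$ is unramified and $\gamma(V_p) = 1$, so all three generator formulas fix $\phi_p$, giving $\omega(\gamma)\phi_p = \phi_p$ for every $\gamma \in \SL_2(\Z_p)$, hence $\Phi_p(\gamma, s_0) = 1$ and the right $\SL_2(\Z_p)$-invariance.  For part (2) with $p \nmid c$, I would use the Bruhat decomposition $\gamma = n(a/c)\,w\,m(c)\,n(d/c)$, in which $a/c, d/c \in \Z_p$ and $c \in \Z_p^\times$; applying the generator formulas in order and evaluating at $x = 0$ directly produces $\Phi_p(\gamma, s_0) = \chi_p(c)\,\gamma(V_p)\, p^{-\ell/2}$.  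For $p \mid c$, I would use the Iwasawa-type factorization $\gamma = n(b d^{-1})\, m(d^{-1})\, \kzxz{1}{0}{cd^{-1}}{1}$ with $cd^{-1} \in p\Z_p$; once the key identity
$$\omega\bigl(\kzxz{1}{0}{c'}{1}\bigr)\,\phi_p \;=\; \phi_p, \qquad c' \in p\Z_p,$$
is established, the evaluation $\omega(\gamma)\phi_p = \chi_p(d^{-1})\phi_p = \chi_p(a)\phi_p$ (using $ad \equiv 1 \pmod{p}$) delivers simultaneously both $\Phi_p(\gamma, s_0) = \chi_p(a)$ and the right-equivariance $\Phi_p(g\gamma) = \chi_p(a)\,\Phi_p(g)$.

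The main obstacle is precisely this identity $\omega\bigl(\kzxz{1}{0}{c'}{1}\bigr)\phi_p = \phi_p$ for $c' \in p\Z_p$, since the opposite unipotent is not among the three generators with explicit closed formulas.  The plan is to conjugate through $w$: writing $\kzxz{1}{0}{c'}{1} = w\, n(-c')\, w^{-1}$, the vector $\omega(w)^{-1}\phi_p$ becomes a scalar multiple of $\cha(\pi^{-1}L_p)$; on this enlarged support one has $Q(x) \in p^{-1}\Z_p$ (because $\pi\bar\pi \in p\Z_p^\times$), so the phase $\psi_p(-c'\,Q(x))$ is identically $1$ whenever $c' \in p\Z_p$, and $\omega(n(-c'))$ acts trivially on $\omega(w)^{-1}\phi_p$.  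The outer $\omega(w)$ then restores $\phi_p$ via Fourier inversion, combined with the two ramified-case identities $\gamma(V_p)^2 = \chi_p(-1)$ and $\mathrm{vol}(\pi^{-1}L_p) = p^{\ell/2}$.
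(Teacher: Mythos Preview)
Your argument is correct; the paper does not actually prove this lemma but records it as a standard fact with references to the literature, and your direct computation via the Weil-representation generator formulas together with the Bruhat and Iwasawa-type factorizations is precisely the approach one finds there. As a minor simplification, in the last step you need not invoke $\gamma(V_p)^2=\chi_p(-1)$: once $\omega(n(-c'))$ fixes $\omega(w)^{-1}\phi_p$, applying $\omega(w)$ returns $\phi_p$ tautologically.
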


Notice that similar results hold when $s_\ell$ is replaced by general $s$ although we do not need it here. We have now the following theorem by the above lemmas.

\begin{theorem}\label{thm:Eisenstein Series} 
Let the notation be as above. Then  for $\ell \ge 4$ and $s_\ell=\ell -1$, 
\begin{align*}
E_L(\tau, s_\ell) &= \sum_{\gamma \in  \Gamma_\infty \backslash \SL_2(\Z)}
 a(\gamma, \ell)  j(\gamma, \tau)^{-\ell}
\\
 &= \sum_{\delta \in  \Gamma_\infty \backslash \SL_2(\Z)/\Gamma_1(D)} a(\delta, \ell)  \sum_{ \gamma \in \Gamma_\infty \backslash \delta \Gamma_1(N)} j(\gamma, \tau)^{-\ell}.
\end{align*}
Here  $a(\gamma, \ell) =\prod_{p |D}  a_p (\gamma, \ell)$  is right $\Gamma_1(D)$-invariant with 
$$
a_p(\gamma, \ell) 
 = \begin{cases}
  \gamma(V_p)\chi_p(a)   &\hbox{if  }  p |c,
  \\
  \gamma(V_p)\chi_p(c) p^{-\ell/2} &\hbox{if } p\nmid c,  
\end{cases} 
$$ 
if $\gamma=\begin{pmatrix}
    a & b\\
    c & d
\end{pmatrix}$.
\end{theorem}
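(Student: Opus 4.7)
The plan is a direct unfolding of the adelic Eisenstein series together with evaluation of the local sections at $s=s_0$ using the two preceding lemmas. First I would use strong approximation for $\SL_2$, which gives the identification $B\backslash\SL_2(\Q)=\Gamma_\infty\backslash\SL_2(\Z)$ already recorded in the text, to rewrite
$$
v^{\ell/2}E_L(\tau,s_0)=\sum_{\gamma\in\Gamma_\infty\backslash\SL_2(\Z)}\Phi_L(\gamma g_\tau,s_0).
$$
For $\gamma\in\SL_2(\Z)$ the local components satisfy $\gamma_p\in\SL_2(\Z_p)$ at every finite $p$, and since $g_\tau$ has trivial finite component the factorization $\Phi_L=\gamma(V_\infty)\Phi_\infty^\ell\cdot\prod_p\Phi_p$ reduces the evaluation of each summand to a product of local pieces $\Phi_\infty^\ell(\gamma g_\tau,s_0)$ and $\Phi_p(\gamma_p,s_0)$.

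Next, each local piece is computed by one of the two preceding lemmas: the archimedean lemma yields $\Phi_\infty^\ell(\gamma g_\tau,s_0)=v^{\ell/2}j(\gamma,\tau)^{-\ell}$; the $p\nmid D$ case of the second lemma yields $\Phi_p(\gamma_p,s_0)=1$; and the $p\mid D$ case yields exactly the factor $a_p(\gamma,\ell)$ of the statement, according to whether $p\mid c$ or $p\nmid c$. Multiplying these local values and dividing by $v^{\ell/2}$ gives the first displayed formula, with the remaining global constant $\gamma(V_\infty)\prod_p\gamma(V_p)$ equal to $1$ by the global Weil-index reciprocity for the rational space $V$.

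To pass from the single-coset expression to the double-coset expression I would verify that $\gamma\mapsto a(\gamma,\ell)$ is right $\Gamma_1(D)$-invariant. For $\delta\in\Gamma_1(D)$ one has $c_\delta\equiv 0\pmod D$ and $a_\delta\equiv 1\pmod D$, so at each $p\mid D$ the right-multiplication formula in the second lemma applies (as $p\mid c_\delta$) and gives $\Phi_p(\gamma_p\delta_p,s_0)=\Phi_p(\gamma_p,s_0)\chi_p(a_\delta)=\Phi_p(\gamma_p,s_0)$. Hence $a_p(\gamma\delta,\ell)=a_p(\gamma,\ell)$ for each $p\mid D$, and regrouping the sum over $\Gamma_\infty\backslash\SL_2(\Z)$ into $\Gamma_1(D)$-orbits produces the second displayed formula.

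The main difficulty is not analytic: the hypothesis $\ell\ge 4$ places $s_0=\ell-1$ well inside the region of absolute convergence for both the full Eisenstein series and each inner Poincar\'e-type partial sum $\sum_{\gamma\in\Gamma_\infty\backslash\delta\Gamma_1(D)}j(\gamma,\tau)^{-\ell}$, so every rearrangement above is justified term-by-term and no analytic continuation is required. The only bookkeeping that demands genuine care is tracking the local Weil-index factors $\gamma(V_v)$ so that what appears in $\Phi_\infty^\ell$ and in $a_p(\gamma,\ell)$ at $p\mid D$ multiplies to $1$ across all places; once the global reciprocity for these indices is applied, the proof reduces to the two local lemmas supplied in the excerpt.
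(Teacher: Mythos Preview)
Your proposal is correct and matches the paper's approach: the paper simply states that the theorem follows ``by the above lemmas,'' and your write-up is precisely the natural unfolding of that remark---factorize $\Phi_L$, evaluate each local piece using the two lemmas, cancel the archimedean $v^{\ell/2}$, and invoke the product formula for the local Weil indices to dispose of the global constant. Your explicit verification of right $\Gamma_1(D)$-invariance (via the transformation rule in the second lemma) and the observation that $\ell\ge 4$ guarantees absolute convergence are exactly the bookkeeping the paper leaves implicit.
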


\subsection{Modularity of arithmetic theta functions with Kudla Green functions} \label{sect:K}

In this subsection, we keep the notation of previous sections but assume  $n=2$. In particular,  $L=\Hom_{\Oo_{\kay}}(\mathfrak a_0, \mathfrak a)$ is a self-dual hermitian lattice of signature  $(1, 1)$. Let 
\begin{equation}
\widehat{\Theta}_K (\tau) = \sum_{m \in \Z} \widehat{\mathcal Z}^{\tot} (m, v) q^m,  \quad v = \Im(\tau)   
\end{equation}
be the arithmetic theta function with Kudla Green functions.
The purpose of this subsection is to prove the following theorem, which is part of Theorem \ref{theo:Modularity}.

\begin{theorem}\label{theo:K} Let the notation be as above. Then $\widehat{\Theta}_K(\tau)$ is a (non-holomorphic) modular form of $\Gamma_0(D) $  with values in $\widehat{\operatorname{CH}}_\C^1(\mathcal S^*)$. By modularity, we mean that for every linear map $\lambda: \widehat{\operatorname{CH}}_\C^1(\mathcal S^*) \rightarrow \C $,  $\lambda (\widehat{\Theta}_K(\tau)) $ is a real analytic modular form of the same type.
\end{theorem}
\begin{proof}
Let $L_0$ be a positive definite self-dual lattice of rank $1$ and $V_0=L_0\otimes \Q$.
Applying Theorem \ref{theo:Decomposition} to each lattice $\Lambda$ in the genus of $L_0^\ell$ and Siegel-Weil Formula for $L_0^\ell$,  we have 
\begin{equation}\label{eq:K}
  \sum_{\Lambda \in [[L_0^\ell]]}  \frac{1}{|\mathrm{Aut}(\Lambda)|} \varphi_\Lambda^*(\widehat{\Theta}_K^\dia(\tau))= \sum_{\Lambda \in [[L_0^\ell]]}  \frac{1}{|\mathrm{Aut}(\Lambda)|}  \theta_{\Lambda}(\tau)\cdot \widehat{\Theta}_K(\tau)
  = E_{L_0^\ell}(\tau,  s_\ell) \widehat{\Theta}_K(\tau) .  
\end{equation}
By  \cite[Theorem B]{BHKRY1} and \cite[Theorem 1.4]{ES},  the left hand side of \eqref{eq:K} is a (non-)holomorphic modular form of  $\Gamma_0(D)$ of  weight $2+\ell$ and character $\chi_{-D}^{\ell+2}$.  So 
$$
\widehat{\Theta}_K(\tau) =\frac{\sum_{\Lambda \in [[L_0^\ell]]}  \frac{1}{|\mathrm{Aut}(\Lambda)|} \varphi_\Lambda^*(\widehat{\Theta}_K^\dia(\tau))}{E_{L_0^\ell}(\tau,  s_\ell)}
$$
is a meromorphic  modular form of  $\Gamma_0(D)$ of  weight $2$ and character $\chi_{-D}^{2}$ with possible poles at the zeros of the Eisenstein series. We now  prove that $\widehat{\Theta}_K(\tau)$ is has no poles by varying the Eisenstein series. By Theorem \ref{thm:Eisenstein Series}, we see that
\[E_{L_0^\ell}(\tau,s_\ell)=\sum_{\gamma \in \Gamma_\infty\backslash \mathrm{SL}_2(\Z)} (\prod_{p|D} b_p(\gamma,\ell)\cdot j(\gamma,\tau)^{-1})^\ell, \]
where  
\[b_p(\gamma,\ell)=\begin{cases}
  \gamma(V_0)\chi_p(a)   &\hbox{if  }  p |c,
  \\
  \gamma(V_0)\chi_p(c) p^{-1/2} &\hbox{if } p\nmid c,  
\end{cases} \]
if $\gamma=\begin{pmatrix}
    a & b\\
    c & d
\end{pmatrix}$.
For every $\tau_0 \in  \H$,  \cite[Lemma 5.6, Chapter I]{freitag1990hilbert} implies that  there is some $\ell>0 $ such that   $E_{L_0^\ell}(\tau_0 ,s_\ell)\ne 0$. So $\widehat{\Theta}_K(\tau)$ is  well-defined at $\tau_0$ and thus well-defined everywhere.  This proves that $\widehat{\Theta}_K(\tau)$ is modular.
\end{proof}

Now, the modularity for  $\widehat{\Theta}_B(\tau)$  (the associated Green functions are Bruinier Green functions as mentioned in the introduction) should follow directly from Theorem  \ref{theo:K} and \cite[Theorem 1.4]{ES}. However, there is a little subtlety involved: the Green function constructed by Ehlen and Sankaran is a little different from that of Bruinier when  $n=2$ as in our case. The next three subsections explains the subtlety and proves Theorem \ref{theo:Modularity}.

\subsection{A little preparation}  \label{sect:ES}
We first  recall  Ehlen and Sankaran's Green functions. 
In this subsection, let $L$ be an even dimensional integral non-degenerate $\Z$-lattice of signature $(2, 2)$ (in our case, we view our unimodular $\Oo_{\kay}$-lattice  $L$ of signature $(1, 1)$ as a quadratic lattice with quadratic form $q(x) =(x,x)$). Let $L'$ be the dual of $L$ with respect to the quadratic form, and $S_L= \C[L'/L]$ with Weil representation $\rho_L$. Let $S_L^\vee $ be its dual with dual Weil representation  $\rho_L^\vee$. Let $\phi_\mu$ ($\mu \in L'/L$) be the standard basis of $S_L$ and $\phi_\mu^\vee$ be the dual basis of $S_L^\vee$.

\begin{definition} (\cite{BF04}) For an  integer  $k \in \Z$,   Let $H_k(\rho_L)$ be the space of twice continuously differentiable functions $F: \H \rightarrow S_L$ such that 
\begin{enumerate}
\item $ F(\gamma)(\tau) =(c \tau +d)^k  \rho(\gamma) F(\tau)$ for $\gamma \in \Gamma=\hbox{SL}_2(\Z)$.

\item (at most exponential growth) there is a constant $C>0$ such that $F(u+ i v) =O(e^{Cv})$
as $v \rightarrow \infty$.

\item ($k$-harmonic) We have  $\Delta_k(F) =0$, where 
$$
\Delta_k = -v^2 (\frac{\partial^2}{\partial u^2} + \frac{\partial^2}{\partial v^2}) + ik v (\frac{\partial}{\partial u}  + i \frac{\partial}{\partial v})
$$
is the hyperbolic Laplace operator in weight $k$.

\item  $\xi_k(F) \in S_{2-k}(\rho_L^\vee)$ is a cuspidal modular form values in $S_L^\vee$. Here  the $\xi$-operator is given by 
$$
\xi_k(F) = 2 i v^k \overline{\frac{\partial F}{\partial \bar\tau}} = v^{k-2} \overline{\mathbf L(F)},
$$
where $\mathbf L = -2 i v^2 \frac{\partial}{\partial \bar\tau}$ is the Maass lowering operator.
\end{enumerate}
   An element in $H_k(\rho_L)$ is called a harmonic Maass modular form (of weight $k$ with  values in $S_L$). It is called a weakly holomomorphic modular form if $\xi_k(F) =0$. We denote the space of weakly holomomorphic forms in $H_k(\rho_L)$ by $M_k^!(\rho_L)$, and the space of modular forms in $H_k(\rho_L)$ by $M_k(\rho_L)$.
\end{definition}

According to Bruinier and Funke (\cite{BF04}),  the $\xi$-operator gives an exact sequence 
\begin{equation}
0 \rightarrow M_k^!(\rho_L) \rightarrow H_k(\rho_L) \rightarrow S_{2-k}(\rho_L^\vee)  \rightarrow 0.
\end{equation}
Every $F \in H_k(\rho_L)$ is smooth and admits a decomposition 
\begin{equation}
F(\tau) = F^+(\tau) + F^-(\tau)
\end{equation}
into its holomorphic and non-holomorphic parts. Here its holomorphic part 
\begin{equation}
    F^+(\tau)=\sum_{m \gg - \infty } c_F^+(m) q^m,  \quad c_F^+(m) \in S_L
\end{equation}
has only finitely many negative terms, and its non-holomorphic part $F^-(\tau)$ is of exponential decay as $v =\Im(\tau)$ goes to infinity.

\begin{lemma} Let $F \in  H_0(\rho_L)$ with $c_F^+(m) =0$ for all $m <0$. Then 
 $F \in M_0(\rho_L) =S_L^{\hbox{SL}_2(\Z)}$ is a holomorphic modular form of weight $0$, i.e., an element in $S_L$ fixed by $\hbox{SL}_2(\Z)$ via $\rho_L$.
\end{lemma}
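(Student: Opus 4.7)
The plan is to apply the Bruinier--Funke pairing between harmonic Maass forms and cusp forms to force the nonholomorphic part to vanish, and then observe that a weight $0$ holomorphic vector-valued modular form must be a constant.

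First, I would invoke the Bruinier--Funke pairing (from their paper \emph{On two geometric theta lifts}). For any $g \in S_2(\rho_L^\vee)$, one has
\begin{equation*}
(g, \xi_0(F))_{\mathrm{Pet}} = \{g, F\} = \sum_{\mu \in L'/L} \sum_{m \le 0} c_F^+(m, \mu)\, b_g(-m, \mu),
\end{equation*}
where $b_g(n,\mu)$ denote the Fourier coefficients of $g$. Specializing to $g = \xi_0(F) \in S_2(\rho_L^\vee)$, the right-hand side becomes $\sum_\mu \sum_{m \le 0} c_F^+(m, \mu)\, b_{\xi_0(F)}(-m, \mu)$. The $m = 0$ term vanishes because $\xi_0(F)$ is cuspidal, so $b_{\xi_0(F)}(0, \mu) = 0$. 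The terms with $m < 0$ vanish by the hypothesis $c_F^+(m) = 0$ for $m < 0$. Therefore $\|\xi_0(F)\|_{\mathrm{Pet}}^2 = 0$, and so $\xi_0(F) = 0$.

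Consequently $F \in M_0^!(\rho_L)$, i.e.\ $F = F^+$ is weakly holomorphic. Since also $c_F^+(m) = 0$ for all $m < 0$ by hypothesis, $F$ is actually holomorphic at the cusp, hence $F \in M_0(\rho_L)$. It remains to note that $M_0(\rho_L)$ consists only of constants: each component of $F$ with respect to the basis $\{\phi_\mu\}$ is a scalar-valued holomorphic modular form of weight $0$ for some finite-index congruence subgroup of $\mathrm{SL}_2(\Z)$ contained in $\ker(\rho_L)$. A bounded holomorphic function on the compact modular curve of such a subgroup must be constant, so $F$ is a constant element of $S_L$. The transformation law $F(\gamma \tau) = \rho_L(\gamma) F(\tau)$ for $\gamma \in \mathrm{SL}_2(\Z)$ then forces $F \in S_L^{\mathrm{SL}_2(\Z)}$, as desired.

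The main obstacle, if any, is simply citing (or verifying) the exact form of the Bruinier--Funke pairing in the relevant weight and ensuring the constant term vanishes; once that is in place, the rest is formal.
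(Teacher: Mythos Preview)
Your proof is correct but takes a different route from the paper's. The paper gives a shorter, more direct argument: since $c_F^+(m) = 0$ for $m < 0$, the holomorphic part $F^+$ is bounded at the cusp, and since $F^-$ has exponential decay, $F$ itself is bounded near the cusp. Being weight $0$, $F$ is then a bounded harmonic ($S_L$-valued) function on the compact modular curve $\mathrm{SL}_2(\Z)\backslash\H^* \cong \mathbb{P}^1(\C)$, hence constant by the maximum principle.

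Your approach instead uses the Bruinier--Funke pairing to kill $\xi_0(F)$ first, and only then invokes the fact that holomorphic weight-$0$ forms are constant. This is a perfectly valid and standard technique in the harmonic Maass form literature; it has the virtue of generalizing cleanly to other weights, where the same computation shows that vanishing principal part forces $\xi_k(F)=0$. The paper's argument, by contrast, exploits the special feature of weight $0$---that harmonicity plus boundedness on a compact Riemann surface already forces constancy---and thereby reaches the conclusion in one stroke without appealing to the pairing at all.
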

\begin{proof}
Since  $c_F^+(m) =0$ for  $m <0$, $F^+(\tau)$ and thus $F(\tau)$ is bounded at the cusp $\infty$. So $F(\tau)$ is a bounded  harmonic ($S_L$-valued) function on the compact modular curve $\hbox{SL}_2(\Z) \backslash \H^* =\mathbb P^1(\C)$ and is thus a constant.
\end{proof}

The following is basically  a special case of \cite[Lemma 2.4]{ES}.

\begin{lemma} \label{lem:F} For $m \in \Q$ and $\mu \in  L'/L$ with $Q(\mu) \equiv m \mod \Z$, there is a unique $F_{m, \mu} \in H_0(\rho_L)$ such that 
\begin{enumerate}
    \item  The holomorphic part of $F_{m, \mu}$ has the form
$$
F_{m, \mu}^+ (\tau) =q^{-m} \tilde\phi_\mu  + \sum_{n \ge 0} c_{m, \mu}^+ (n) q^n,  \quad c_{m, \mu}^+(n) \in S_L,
$$
where $\tilde\phi_\mu = \frac{1}2 (\phi_\mu + \phi_{-\mu})$.  We denote  $c_m^+(0)$ for the $\phi_0$-component of $c_{m,0}^+(0)$ for later use.

  \item The identity
  \begin{equation} \label{eq:n=0}
\sum_{\nu \in L'/L}  a_f(0, \nu)  c_{m, \mu}^+(0, \nu) =a_f(m, \mu)
\end{equation}
holds for every $f \in M_{0}(\rho_L^\vee)$.  Here $a_f(n, \nu)$ is the $(n, \nu)$-th coefficient of $f$, and $c_{m, \mu}^+(n, \nu)$ is the $(n, \nu)$-th coefficient of $F_{m, \mu}^+$.
\end{enumerate}
    
\end{lemma}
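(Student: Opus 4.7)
The plan is to produce $F_{m,\mu}$ in two stages: first construct some $F_0 \in H_0(\rho_L)$ realising the prescribed principal part, and then shift $F_0$ by an element of $M_0(\rho_L)$ to enforce \eqref{eq:n=0}.

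For the first stage I would invoke the Bruinier--Funke theory of harmonic Maass forms. When $m > 0$, the weight--$0$ Maass--Poincar\'e series of index $(-m,\mu)$, symmetrised over $\mu \leftrightarrow -\mu$, supplies an element of $H_0(\rho_L)$ whose holomorphic part begins with $q^{-m}\tilde\phi_\mu$; for $m \le 0$ one may simply take $F_0 = 0$, so that the desired leading term is absorbed into the remaining freedom. The essential fact here is that although weakly holomorphic forms in $M_0^!(\rho_L)$ are subject to a Serre-duality obstruction coming from cusp forms in $S_2(\rho_L^\vee)$, the operator $\xi_0 \colon H_0(\rho_L) \twoheadrightarrow S_2(\rho_L^\vee)$ is surjective, so an arbitrary principal part can be realised in $H_0(\rho_L)$ after allowing a non-holomorphic correction.

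For uniqueness modulo $M_0(\rho_L)$, if $F_1, F_2 \in H_0(\rho_L)$ both have principal part $q^{-m}\tilde\phi_\mu$, then $F_1 - F_2 \in H_0(\rho_L)$ has no polar coefficients, so by the lemma just preceding the statement it lies in $M_0(\rho_L) = S_L^{\mathrm{SL}_2(\Z)}$. Thus the candidates for $F_{m,\mu}$ form an affine space $F_0 + M_0(\rho_L)$, and the only remaining freedom is in the constant-term vector $c_{m,\mu}^+(0) \in S_L$, which is translated by $g \in M_0(\rho_L)$ when one replaces $F_0$ by $F_0 + g$.

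The third stage is to pin down this freedom using \eqref{eq:n=0}. For $f \in M_0(\rho_L^\vee)$, the function $f$ is constant and $\mathrm{SL}_2(\Z)$-invariant, so $a_f(n,\nu) = 0$ for $n \neq 0$ while $a_f(0,\nu)$ is the $\phi_\nu^\vee$-coefficient of $f$; the identity \eqref{eq:n=0} then reduces to a single linear condition on the image of $c_{m,\mu}^+(0)$ in the quotient of $S_L$ naturally dual to $M_0(\rho_L^\vee)$. The key input is that the pairing
\[
M_0(\rho_L) \times M_0(\rho_L^\vee) \longrightarrow \C,
\]
obtained by restricting the evaluation pairing $S_L \times S_L^\vee \to \C$ to $\mathrm{SL}_2(\Z)$-invariants on both sides, is perfect. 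This follows from semisimplicity of the finite-dimensional Weil representations on $S_L$ and $S_L^\vee$, which identifies invariants with coinvariants and commutes with duality. Given this perfect pairing, there is a unique $g \in M_0(\rho_L)$ whose shift cancels the discrepancy, and $F_{m,\mu} := F_0 + g$ is the unique element of $H_0(\rho_L)$ satisfying both (1) and (2).

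I expect the main technical obstacle to be case-by-case bookkeeping of \eqref{eq:n=0}: when $m > 0$ the right-hand side vanishes and the identity reads as orthogonality of $c_{m,\mu}^+(0)$ to $M_0(\rho_L^\vee)$, whereas when $m \le 0$ the prescribed leading term itself contributes to the pairing and the condition becomes affine rather than linear. Carefully matching the symmetrisation factor in $\tilde\phi_\mu$ and the action of $-1 \in \mathrm{SL}_2(\Z)$ on $L'/L$ with the conventions of Ehlen--Sankaran \cite[Lemma 2.4]{ES} should unify all three cases under the same perfect-pairing principle.
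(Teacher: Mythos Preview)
Your proposal is correct and follows essentially the same line as the paper's proof: both reduce to the fact that the pairing $M_0(\rho_L)\times M_0(\rho_L^\vee)\to\C$ is perfect (equivalently, $M_0(\rho_L)^\vee\cong M_0(\rho_L^\vee)$), which pins down the $M_0(\rho_L)$-ambiguity after one has a form with the right principal part. The only difference is packaging: the paper simply cites \cite[Lemma 2.4]{ES} and then checks that the splitting map $\eta\colon M_0(\rho_L)^\vee\cong M_0(\rho_L^\vee)\hookrightarrow \mathrm{Sing}_2(\rho_L^\vee)$ induced by the natural pairing $S_L\times S_L^\vee\to\C$ makes condition (ii) of that lemma coincide with \eqref{eq:n=0}, whereas you unfold the content of the Ehlen--Sankaran lemma (existence via Poincar\'e series and surjectivity of $\xi_0$, uniqueness modulo invariants, then the perfect pairing).
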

\begin{proof}
   The lemma  follows from \cite[Lemma 2.4]{ES} together with the following explicit construction of a splitting map $\eta$ in \cite[(2.8)]{ES}. First notice that $M_0(\rho_L)=S_L^{\SL_2(\Z)}$ is the maximal subspace of $S_L$ on which $\SL_2(\Z)$ acts trivially. Secondly,  we have a natural embedding
$$
M_0(\rho_L^\vee) \rightarrow \hbox{Sing}_2(\rho_L^\vee),  \quad f=\sum c_f(m) q^m  \mapsto P(f) =\sum_{m \le 0} c_f(m) q^m,
$$
where
$$
\hbox{Sing}_2(\rho_L^\vee) =\{ P =\sum_{m \le 0} a_P(m) q^m:\,  a_P(m) \in S_L^\vee \}.
$$
Now the natural non-degenerate bilinear form
$$
S_L \times S_L^\vee\rightarrow  \C ,  \quad  \langle \sum a(\mu) \phi_\mu,  \sum b(\mu) \phi_\mu^\vee \rangle =\sum a(\mu) b(\mu)
$$
gives rise to the splitting map
$$
\eta:  M_0(\rho_L)^\vee  \cong M_0(\rho_L^\vee) \rightarrow \hbox{Sing}_2(\rho_L^\vee).
$$
It is clear from the definition that \cite[Lemma 2.4  Condition(ii)]{ES} is the same as  (\ref{eq:n=0}).
\end{proof}

\begin{corollary} Let the notation be as in  Lemma \ref{lem:F}. Then $F_{m, \mu}=0$ for $m <0$, and $F_{0, \mu} \in M_0(\rho_L)$. 
\end{corollary}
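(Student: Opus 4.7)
The plan is to address the two assertions of the corollary in turn, both as consequences of Lemma \ref{lem:F} together with the unnamed lemma immediately preceding it (which states that any $F \in H_0(\rho_L)$ whose holomorphic part has no polar terms at infinity must lie in $M_0(\rho_L) = S_L^{\SL_2(\Z)}$, i.e.\ must be constant in $\tau$).

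First I would establish the inclusion $F_{m,\mu} \in M_0(\rho_L)$ whenever $m \le 0$. The key observation is that for such $m$ the exponent $-m$ in condition (1) of Lemma \ref{lem:F} is non-negative, so the ``principal part'' term $q^{-m}\tilde\phi_\mu$ may be absorbed into $\sum_{n \ge 0} c_{m,\mu}^+(n)q^n$. Thus $F_{m,\mu}^+$ has no terms with negative powers of $q$, and the preceding lemma immediately gives $F_{m,\mu} \in M_0(\rho_L)$, viewed as a constant function on $\H$.

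Next I would upgrade this to the vanishing $F_{m,\mu} = 0$ when $m < 0$ by invoking condition (2). By the previous step $F_{m,\mu}$ is a constant element $C \in S_L^{\SL_2(\Z)}$, whose only nonzero Fourier coefficient is the constant term $c_{m,\mu}^+(0) = C$ (the coefficient $c_{m,\mu}^+(|m|)$ is forced to equal $-\tilde\phi_\mu$ by condition (1), but this plays no further role). Since $m < 0$ and every $f \in M_0(\rho_L^\vee)$ is a constant in $\tau$, we have $a_f(m,\mu) = 0$, and condition (2) reduces to
\[
\langle C,\, f\rangle \;=\; \sum_{\nu \in L'/L} c_{m,\mu}^+(0,\nu)\, a_f(0,\nu) \;=\; 0 \qquad \text{for every } f \in M_0(\rho_L^\vee),
\]
where $\langle\cdot,\cdot\rangle$ is the natural pairing $S_L \times S_L^\vee \to \C$ used in the proof of Lemma \ref{lem:F}. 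The desired conclusion $C = F_{m,\mu} = 0$ then follows from the non-degeneracy of the restricted pairing $M_0(\rho_L) \times M_0(\rho_L^\vee) \to \C$.

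The only point requiring a brief justification, and the main (mild) obstacle in the argument, is this non-degeneracy of the pairing on invariants. It is standard: since $\rho_L$ factors through a finite quotient of $\SL_2(\Z)$, the representation $S_L$ is semisimple, and Maschke's theorem (equivalently, averaging over the finite quotient) shows that the natural equivariant pairing $S_L \times S_L^\vee \to \C$ restricts to a perfect pairing between the invariant subspaces $M_0(\rho_L)$ and $M_0(\rho_L^\vee)$. With this in hand the proof is complete; everything else is a direct unwinding of properties (1) and (2) of $F_{m,\mu}$.
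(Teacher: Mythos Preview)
Your argument is correct and is precisely the intended one: the paper states the corollary without proof, expecting the reader to combine the unnamed lemma (constancy of weight-$0$ harmonic forms with trivial principal part) with condition (2) of Lemma~\ref{lem:F}, exactly as you do. A marginally quicker variant for $m<0$ is to observe directly that the zero function satisfies both conditions (1) and (2) (since $a_f(m,\mu)=0$ for constant $f$ and $m\neq 0$) and then invoke the uniqueness clause of Lemma~\ref{lem:F}; this bypasses the non-degeneracy step, though your justification of that step via the finite quotient is perfectly valid.
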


Following  Ehlen and Sankaran \cite{ES}, we can view $F_m$ as a harmonic 
 modular form of weight $0$ valued in $S_L \otimes S_L^\vee$ via 
$$
F_m =\sum_{\mu \in L'/L} F_{m, \mu} \phi_\mu^\vee.
$$

Let $V=L_\Q =L \otimes_\Z \Q$, and let 
$\mathcal D^o$ be the oriented  negative $2$-planes in $V_\R$. For $z \in \mathcal D^o$, decompose $$V_\R =z \oplus z^\perp,  \quad x = x_z + x_{z^\perp}.$$ For $h \in \SO(V)(\A_f)$ and $z \in \mathcal D^o$, one has the Siegel-theta function 
\begin{equation}
\theta(\tau, z, h; \phi) =v  \sum_{x \in  V} \phi(h^{-1}x)  \phi_\infty(\sqrt v x, z)
\end{equation}
which is a modular form of $\tau=u+iv$ of weight $0$, and $\SO(V)(\Q)$-invariant. Here 
$$
\phi_\infty(x, z) = e^{-\pi (x, x)_z} \in S(V_\R).
$$
Denote 
\begin{equation}
\theta_L(\tau, z, h) =\sum_{\mu \in L'/L} \theta(\tau, z, h; \phi_\mu) \phi_\mu^\vee.
\end{equation}
which is a (non-holomorphic) modular form of $\SL_2(\Z)$ of weight $0$ valued in $S_L^\vee$.
The Bruinier Green functions (the Ehlen and Sankaran version)  are defined as 
\begin{align}\label{eq:ESGreen}
G_{ES}(m, \mu) &= \int^{reg} \langle F_{m, \mu}, \theta_L \rangle d\mu(\tau)
\\
 &= \operatorname{CT}_{s=0} \lim_{T \rightarrow \infty} \int_{\mathcal F_T}  \langle  F_{m, \mu}, \theta_L \rangle v^{-s} d\mu(\tau).   \notag
\end{align}
Here 
$
\langle F_{m, \mu}, \theta_L \rangle 
$
comes from  the natural paring between $S_L$ and $S_L^\vee$, 
$$
\mathcal F_T =\{ \tau =u+ i v \in \H:\,  -\frac{1}{2} \le u \le \frac{1}{2},  0 <v <T, |\tau| \ge 1 \}
$$ is the truncated fundamental domain for $\SL_2(\Z)$, and finally $\operatorname{CT}_{s=0}(f(s))$ is the constant term of the Laurent series of $f(s)$ at $s=0$. 

\subsection{Arithmetic theta function with Bruinier Green functions I---Ehlen and Sankaran's normalization}  \label{sect:B}

Now we come back to the notation and assumptions of Section \ref{sect:K}. The harmonic functions $F_m$ in Section \ref{sect:ES} are now associated to the lattice $L$ with quadratic form $Q(x) =(x, x)$.

For a cusp $\Phi$ associated to the genus of  $L$  and $m \ge 0$, let  
\begin{equation} \label{eq:cm0}
\eta_\Phi(m) =-2[  \sigma_1(m) + c_{m}^+(0)].
\end{equation}
Here  $c_m^+(0)$ is defined in Lemma \ref{lem:F} and  $\sigma_1(0) =-\frac{1}{24}$. Note that $\eta_\Phi$ does not depend on the choice of the cusp $\Phi$. 

Let 
\begin{equation}
    \mathcal Z_{ES}^{\tot}(m) =\mathcal Z^*(m) +  \sum_{\Phi \in  \mathrm{Cusp}([[L]]) } \eta_{\Phi}(m) [\mathcal S^*(\Phi)]  \in {\CH}_\C^1(\mathcal S^*),
\end{equation}
for $m \in \Z_{>0}$, and 
\begin{equation}
    \mathcal Z_{ES}^{\tot}(0) = \widehat{\Omega}^{-1} + (0, -\log D)  +  \sum_{\Phi\in\mathrm{Cusp}([[L]]) } \eta_{\Phi}(0) [\mathcal S^*(\Phi)]  \in {\CH}_\C^1(\mathcal S^*).
\end{equation}
Define 
\begin{equation}
\widehat{\cZ}_{ES}^{\mathrm{tot}}(m)
=(\mathcal Z_{ES}^{\tot}(m),  G_{ES}(m))  \in  \widehat{\CH}_\C^1(\mathcal S^*).
\end{equation}
Here the Ehlen-Sankaran Green function  $G_{ES}(m) =G_{ES}(m, 0)$ is the Green function  (\ref{eq:ESGreen})
restricted to $\mathcal D \times U(V)(\A_f)$, via the embedding 
$$
\mathcal D \times U(V)(\A_f)  \rightarrow \mathcal D^o \times \SO(V) (\A_f), 
$$ associated to  the natural embedding $U(V) \subset \SO(V)$.

Finally define the arithmetic theta function with Ehlen-Sankran Green functions
\begin{equation}
\widehat{\Theta}_{ES} (\tau) = \sum_{m \in D^{-1} \Z_{\ge 0} } \widehat{\mathcal Z}_{ES}^{\tot}(m) q^m.
\end{equation}

\begin{theorem} \label{theo:B} Let the notations be as above. Then $\widehat{\Theta}_{ES}(\tau) $ is a (holomorphic) modular form of $\Gamma_0(D)$ of weight $2$ (trivial character) with values in 
   $\widehat{\CH}_\C^1(\mathcal S^*)$. 
\end{theorem}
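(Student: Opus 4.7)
The plan is to derive Theorem \ref{theo:B} by combining Theorem \ref{theo:K} with the main theorem of Ehlen-Sankaran \cite[Theorem 1.4]{ES}. Write
$$
\widehat{\Theta}_{ES}(\tau) \;=\; \widehat{\Theta}_K(\tau) \;+\; \bigl(\widehat{\Theta}_{ES}(\tau) - \widehat{\Theta}_K(\tau)\bigr).
$$
Since $n=2$, the character $\chi_{-D}^n$ is trivial, so Theorem \ref{theo:K} says that $\widehat{\Theta}_K(\tau)$ is a (non-holomorphic) modular form of $\Gamma_0(D)$ of weight $2$ and trivial character with values in $\widehat{\CH}_\C^1(\mathcal S^*)$. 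The content of \cite[Theorem 1.4]{ES} is precisely that the difference of the two arithmetic theta series, properly normalized, is itself such a modular form. Adding the two will yield the modularity of $\widehat{\Theta}_{ES}(\tau)$.

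The first technical step I would carry out is to match, coefficient by coefficient, the definition of $\widehat{\mathcal Z}_{ES}^{\tot}(m)$ given in \eqref{eq:cm0} with the generating coefficient of Ehlen-Sankaran. For $m>0$ two pieces need comparison: the boundary correction $\eta_\Phi(m)[\mathcal S^*(\Phi)]$ versus the $v$-dependent boundary divisor $\mathcal B(m,v)$ of \eqref{eq:Bmv} built into $\widehat{\mathcal Z}_K^{\tot}(m,v)$, and the Bruinier Green function $G_{ES}(m)$ of \eqref{eq:ESGreen} versus the Kudla Green function $\mathrm{Gr}(m,v)$. By the construction of the harmonic Maass form $F_{m,\mu}$ in Lemma \ref{lem:F}, the constant $c_m^+(0)$ records exactly the holomorphic-projection defect between the two Green functions near the boundary stratum $\mathcal S^*(\Phi)$, while the divisor sum $\sigma_1(m)$ accounts for the contribution from the Eisenstein piece on the archimedean side. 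For $m=0$, an analogous matching of the three summands in $\widehat{\mathcal Z}_{ES}^{\tot}(0)$ is required; the auxiliary term $(0,-\log D)$ reconciles the Euler-constant normalization in the metric \eqref{eq: metric of Omega} with the convention of \cite{ES}.

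The main obstacle will be a careful bookkeeping of these normalization conventions rather than any new analytic input; nothing depends on the particular nature of $n=2$ except that the character is trivial. Once the normalizations are aligned, \cite[Theorem 1.4]{ES} applies verbatim to give the modularity of $\widehat{\Theta}_{ES}(\tau)-\widehat{\Theta}_K(\tau)$, and summing with Theorem \ref{theo:K} proves modularity. Holomorphicity (the parenthetical claim in the statement) is then automatic: since each $\widehat{\mathcal Z}_{ES}^{\tot}(m)$ is manifestly independent of $v$, the resulting modular form has no non-holomorphic part, i.e. the non-holomorphic contributions from $\widehat{\Theta}_K$ and from the Ehlen-Sankaran difference cancel by construction—this is the reason $\eta_\Phi(m)$ is defined via $c_m^+(0)+\sigma_1(m)$ in the first place.
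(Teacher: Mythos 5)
Your proposal is correct and follows essentially the same route as the paper: the paper's proof is exactly the two-line argument that $\widehat{\Theta}_K(\tau)-\widehat{\Theta}_{ES}(\tau)$ is modular by \cite[Theorem 1.4]{ES} (its $\phi_0$-component), so modularity of $\widehat{\Theta}_{ES}$ follows from Theorem \ref{theo:K}. The normalization bookkeeping you describe is already built into the definitions of $\widehat{\mathcal Z}_{ES}^{\tot}(m)$ in Section \ref{sect:B}, so no further matching argument is needed in the proof itself.
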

\begin{proof} By \cite[Theorem 1.4]{ES} (more precisely its $\phi_0$-component), $\widehat{\Theta}_K(\tau) - \widehat{\Theta}_{ES}(\tau) $
is modular. Now Theorem \ref{theo:K} implies that  $\widehat{\Theta}_{ES}$ is modular.
\end{proof}

Since the Eisenstein series $E_2(\tau)= \sum_{m=0} \sigma_1(m) q^m + \frac{1}{8 \pi v} $ is a modular form of $\SL_2(\Z)$ of weight $2$, we have the following corollary.

\begin{corollary} \label{cor:cm+} Let the notations be as above.  Then 
$$
\sum_{m \ge 0} c_m^+(0) q^m
$$
is a modular form for $\Gamma_0(D)$ of weight $2$.
\end{corollary}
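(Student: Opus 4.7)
The plan is to deduce the corollary from Theorem~\ref{theo:B} by isolating the coefficient of a single boundary divisor in $\widehat{\Theta}_{ES}(\tau)$ and then using the modularity of $E_2(\tau)$ noted immediately before the corollary.

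Fix a cusp $\Phi_0\in\mathrm{Cusps}([[L]])$ and construct a $\C$-linear functional $\lambda:\widehat{\CH}^1_\C(\mathcal{S}^*)\to\C$ with $\lambda([\mathcal{S}^*(\Phi)])=\delta_{\Phi,\Phi_0}$ that vanishes on $\widehat{\Omega}^{-1}$, on the class $(0,-\log D)$, and on each interior Kudla--Rapoport class $(\mathcal{Z}^*(m),G_{ES}(m))$ for $m\geq 1$. Such a $\lambda$ exists by choosing a direct-sum complement inside $\widehat{\CH}^1_\C(\mathcal{S}^*)$ to the finite-dimensional subspace spanned by $\{[\mathcal{S}^*(\Phi)]:\Phi\in\mathrm{Cusps}([[L]])\}$ that contains all the listed non-boundary classes, and letting $\lambda$ be the functional dual to $[\mathcal{S}^*(\Phi_0)]$.

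Since $\widehat{\Theta}_{ES}(\tau)$ is a weight-$2$ modular form for $\Gamma_0(D)$ by Theorem~\ref{theo:B}, applying $\lambda$ term-by-term produces
$$
\lambda\bigl(\widehat{\Theta}_{ES}(\tau)\bigr)=\sum_{m\geq 0}\eta_{\Phi_0}(m)\,q^m=-2\sum_{m\geq 0}\bigl(\sigma_1(m)+c_m^+(0)\bigr)q^m,
$$
which is therefore a weight-$2$ modular form for $\Gamma_0(D)$. Since $\sum_{m\geq 0}\sigma_1(m)\,q^m+\frac{1}{8\pi v}=E_2(\tau)$ is a weight-$2$ modular form for $\SL_2(\Z)\supset\Gamma_0(D)$ by the statement immediately above the corollary, subtracting $(-2)$ times this identity from the displayed equation shows that $\sum_{m\geq 0}c_m^+(0)\,q^m$ is itself a weight-$2$ modular form for $\Gamma_0(D)$ in the same (possibly non-holomorphic-completed) sense.

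The principal obstacle is Step~1, the construction of $\lambda$: one must verify that it can be chosen to vanish simultaneously on all of the classes $(\mathcal{Z}^*(m),G_{ES}(m))$ for $m\geq 1$, despite the fact that the Zariski closure $\mathcal{Z}^*(m)$ may acquire boundary components and $G_{ES}(m)$ has log-log singularities along the cusps. Once one uses the finite-dimensionality of $\mathrm{Span}_\C\{[\mathcal{S}^*(\Phi)]\}$ and the additive structure of $\widehat{\CH}^1_\C(\mathcal{S}^*)$ to exhibit a suitable complement, the rest of the argument reduces to linearity of $\lambda$ and the explicit $E_2$ identification.
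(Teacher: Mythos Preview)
Your approach has a genuine gap in the construction of the functional $\lambda$. You assert that one can choose a complement to $W:=\mathrm{Span}_\C\{[\mathcal S^*(\Phi)]\}$ containing all of the classes $\widehat\Omega^{-1}$, $(0,-\log D)$, and $(\mathcal Z^*(m),G_{ES}(m))$ for $m\ge 1$. But such a complement exists if and only if the span of these ``interior'' classes meets $W$ trivially in $\widehat{\CH}^1_\C(\mathcal S^*)$, and you give no argument for that. This is not a formality: once modularity is known, the classes $\widehat{\mathcal Z}_{ES}^{\tot}(m)$ span a \emph{finite}-dimensional subspace, so there are infinitely many linear relations among them, each of which mixes the interior and boundary pieces. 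Thus there is every reason to expect nontrivial elements of the form $\sum_m a_m(\mathcal Z^*(m),G_{ES}(m))+(\text{constant-term classes})$ lying in $W$, in which case no $\lambda$ with your prescribed vanishing can exist. Your final paragraph acknowledges this obstacle but does not resolve it; appealing to ``finite-dimensionality of $W$ and the additive structure of $\widehat{\CH}^1_\C(\mathcal S^*)$'' is not enough.

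The paper sidesteps this entirely. It forgets the arithmetic data by passing to the generic fiber $\CH^1_\C(S^*)$, writes the image of $\widehat\Theta_{ES}$ as a sum of a boundary piece $(E_2(\tau)+\sum_m c_m^+(0)q^m)\cdot\sum_\Phi[B_\Phi]$ and an ``interior'' geometric theta series $\Theta^{\mathrm{geo}}(\tau)$, and then invokes the independently established modularity of $\Theta^{\mathrm{geo}}$ from \cite[Theorem 1.2]{EGT}. Subtracting $\Theta^{\mathrm{geo}}$ and the modular $E_2$ yields the result. The external input from \cite{EGT} is precisely what replaces the separation you are trying to manufacture with $\lambda$; without something of that strength, your argument does not close. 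A secondary point: even if $\lambda$ existed, your subtraction of $E_2$ leaves a stray $\frac{1}{8\pi v}$ term that you wave away with ``(possibly non-holomorphic-completed) sense''; in the paper this term is absorbed into $\Theta^{\mathrm{geo}}$, and the cancellation is made explicit.
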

\begin{proof} Looking at the generic fiber of $\widehat{\Theta}_{ES}$, we see that 
$$
\Theta_{ES}(\tau) = \left( E_2(\tau) + \sum_{m \in \Z_{\ge 0}} c_m^+(0) q^m  \right) \left(\sum_{\Phi \in  \mathrm{Cusp}([[L]]) }   [ S^*(\Phi)] \right)  + \Theta^{\mathrm{geo}}(\tau)
$$
is a modular form with values in ${\CH}_\C^1( S^*)$ where $S^*$ is the generic fiber of $\cS^*$. Here 
$$
\Theta^{\mathrm{geo}}(\tau) = -\Omega -\frac{1}{8 \pi v} \sum_{\Phi \in  \mathrm{Cusp}([[L]]) } [ S^*(\Phi)]   +\sum_{m \in \Z_{> 0}} Z^*(m) q^m.
$$
Embedding $U(V)$ to $\hbox{SO}(V)$ and applying  \cite[Theorem 1.2]{EGT}, we see that 
$\Theta^{\mathrm{geo}}(\tau)$ is a modular form for $\Gamma_0(D)$ of weight $2$ (trivial character) with values in $\CH_\R^1(S^*)$. Notice  $\left(\sum_{\Phi \in  \mathrm{Cusp}([[L]]) }   [ S^*(\Phi)] \right)$ is non-trivial as its degree is bigger than $0$.  Now the corollary is clear.
\end{proof}

\subsection{Arithmetic theta function with Bruinier Green functions II-Bruinier's normalization} \label{sect:B2}

Similar to Lemma \ref{lem:F} and  \cite[Proposition 3.11]{BF04}),  for each  $m \ge 0$, there is a unique  harmonic  Maass (scalar valued) modular form $f_m$  for $\Gamma_0(D)$ of weight $0$  for each  $m \ge 0$ such that 
\begin{enumerate}

 \item 
\begin{equation}
 f_m^+(\tau) =q^{-m} + \sum_{n \ge  0} c_m(n)q^n,
 \end{equation}

\item  the constant term of $f_m^+$ is zero, 

\item At another cusp $P=\gamma (\infty) \ne \infty$ for $\Gamma_0(D)$, 
$$
f_m^+(\gamma \tau) =\sum_{n \ge 0} c_{m, \gamma} (n) q^n.
$$
\end{enumerate}
Indeed, the existence is essentially given by \cite[Proposition 3.11]{BF04}) without Condition (2).  Two different ones differ by  a constant. Condition (2) makes it unique. 
Lemma \ref{lem:F} (2) is another way to normalize the weight $0$ harmonic Maass forms $f_{m, \mu}$.

For such an $f_m$, let 
$$
\tilde f_m = \sum_{\gamma \in \Gamma_0(D) \backslash \SL_2(\Z)}
 f(\gamma \tau) \rho_L(\gamma^{-1}) \phi_0
 $$
be the associated harmonic Maass modular form for $\SL_2(\Z)$ of weight $0$ and representation $\rho_L$. By \cite[Proposition 6.1.2]{BHKRY1}, we also have  
$$
\tilde f_m ^+  =q^{-m} \phi_0  + \sum_{n \ge 0} \tilde c_m(n) q^n, \quad  \tilde c_m(n) \in S_L.
$$
Let 
\begin{equation} \label{eq:BGreen}
G_B(m) = \int^{reg} \tilde f_m \theta_L d\mu(\tau)
\end{equation} 
be the regularized theta lifting of $\tilde f_m$ as in (\ref{eq:ESGreen}). 
According to \cite[Corollary 4.12]{BHY}, $G_B(m)$ is a Green function for 
$$  Z^*(m)  -2 \sigma_1(m) \sum_{\Phi \in \hbox{Cusp}[[L]]} [  S^*(\Phi)]  
$$   
Similar to \cite[Section 7.3]{BHKRY1}, we define  an arithmetic divisor 
\begin{equation}
\widehat{\mathcal Z}_B^{\tot}(m) = (\mathcal Z^*(m)  -2 \sigma_1(m) \sum_{\Phi \in \hbox{Cusp}[[L]]} [\mathcal S^*(\Phi)],  G_B(m))  +   \begin{cases}
   0 &\ff \,  m >0,
  \\
   \widehat{\Omega}^{-1} + (0, -\log D) &\ff \, m=0.
   \end{cases}
\end{equation}
We again form the arithmetic theta function
\begin{equation}
    \widehat{\Theta}_B(\tau) 
     = \sum_{m \ge 0} \widehat{\mathcal Z}_B^{\tot}(m) q^m. 
\end{equation}

\begin{theorem} The arithmetic theta function $\widehat{\Theta}_B(\tau)$ is a modular form for $\Gamma_0(D)$ of weight $2$ with values in $\widehat{\CH}_\C^1(\mathcal S^*)$.
\end{theorem}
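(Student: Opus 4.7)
The strategy is to compare $\widehat{\Theta}_B(\tau)$ with the Ehlen--Sankaran version $\widehat{\Theta}_{ES}(\tau)$ and reduce the claim to Theorem~\ref{theo:B}. Explicitly, I would prove that the difference
\[\widehat{\Theta}_B(\tau) - \widehat{\Theta}_{ES}(\tau)\]
is a modular form for $\Gamma_0(D)$ of weight $2$ with values in $\widehat{\CH}_\C^1(\mathcal S^*)$, so that adding it to $\widehat{\Theta}_{ES}(\tau)$ and invoking Theorem~\ref{theo:B} finishes the proof. Note that the embedding trick of Theorem~\ref{theo:Decomposition} is not available here (as recorded in the introduction), which is why we route through $\widehat{\Theta}_{ES}$.

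For the algebraic-divisor part of this difference, direct comparison of the two definitions shows that the classes $\widehat\Omega^{-1} + (0, -\log D)$ at $m = 0$ and $-2\sigma_1(m) \sum_\Phi [\mathcal S^*(\Phi)]$ for every $m \geq 0$ cancel, leaving
\[\mathcal Z_B^\tot(m) - \mathcal Z_{ES}^\tot(m) = 2\,c_m^+(0) \sum_{\Phi \in \mathrm{Cusps}([[L]])} [\mathcal S^*(\Phi)].\]
By Corollary~\ref{cor:cm+}, the generating series $\sum_{m \geq 0} c_m^+(0)\, q^m$ is a modular form for $\Gamma_0(D)$ of weight $2$, so the divisor-valued part of $\widehat\Theta_B - \widehat\Theta_{ES}$ is modular.

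For the Green function component $G_B(m) - G_{ES}(m)$, both are regularized theta lifts against $\theta_L$, of the harmonic Maass forms $\tilde f_m, F_{m,0} \in H_0(\rho_L)$ respectively. Since $\tilde\phi_0 = \phi_0$, these two forms share the same principal part $q^{-m}\phi_0$, so their difference lies in $H_0(\rho_L)$ with no principal part; by the lemma in Section~\ref{sect:ES}, it is a constant $C_m \in S_L^{\SL_2(\Z)}$ determined by the differing normalizations of the constant terms of $F_{m,0}$ and $\tilde f_m$. The difference $G_B(m) - G_{ES}(m)$ therefore equals the regularized theta lift of $C_m$, which contributes the expected log-singularities along $\sum_\Phi [\mathcal S^*(\Phi)]$ with multiplicity $2 c_m^+(0)$, together with a smooth correction.

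The main obstacle will be executing the regularized theta integral of the constant $C_m$ precisely enough to read off its log-singularity at each boundary stratum $\mathcal S^*(\Phi)$ as the expected $2 c_m^+(0)$ multiple, and to verify that the residual smooth part, summed over $m$, assembles into a genuine weight $2$ modular form for $\Gamma_0(D)$ (rather than a merely formal $q$-series). Once this matching of boundary behaviours is established, the arithmetic divisor $\widehat\Theta_B(\tau) - \widehat\Theta_{ES}(\tau)$ is modular of weight $2$ with trivial character, and Theorem~\ref{theo:B} together with this correction yields the modularity of $\widehat\Theta_B(\tau)$.
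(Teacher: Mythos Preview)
Your approach is essentially the same as the paper's: both reduce to showing $\widehat{\Theta}_B - \widehat{\Theta}_{ES}$ is modular, compute the divisor difference via $c_m^+(0)$, and invoke Corollary~\ref{cor:cm+}. The only difference is in the Green-function step: rather than computing the regularized theta lift of the constant $C_m$ directly (which you correctly flag as the remaining work), the paper observes that since $\tilde f_m - F_{m,0}$ is weakly holomorphic, $G_B(m)-G_{ES}(m) = -\log\|\Psi_m\|^2$ for the associated Borcherds product $\Psi_m$, whose divisor over $\C$ is $c_m^+(0)\sum_\Phi \mathcal S^*(\Phi)$ by the standard Borcherds divisor formula; it then cites the argument of \cite[Theorem~4.13]{ES} to conclude modularity of the resulting arithmetic-divisor-valued series. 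This bypasses the explicit computation you anticipate, but your route would reach the same conclusion.
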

\begin{proof} It suffices to prove 
\begin{equation} \label{eq:Difference}
\widehat{\Theta}_B(\tau) - \widehat{\Theta}_{ES}(\tau) 
=\sum_{m \ge 0} 
\left(c_m^+(0) \sum_{\Phi} \mathcal S^*(\Phi),  \Phi_m \right) q^m
\end{equation}
is modular. Here  $\Phi_m $ is the regularized theta lifting of  $\tilde f_m -F_{m,0} \in M_0(\rho_L)$, and is a Green function of  $c_m^+(0) \sum_\Phi \mathcal S^*(\Phi)$. By Borcherd's well-known theorem theorem on Borcherds product (\cite{Bor98}),  $\Phi_m= - \log \|\Psi_m \|^2 $, where   
$\Psi_m$ is the Borcherds lifting of  $\tilde f_m -F_{m,0} \in M_0(\rho_L)$ and has Borcherds product expansion. This implies that $\Psi_m$ is a rational section of $\omega_\Q^{c c_m^+(0)}$ for some positive rational number $c>0$.  The $q$-expansion principal implies that it extends to a rational section of $\omega^{c c_m^+(0)}$, and the right hand side of (\ref{eq:Difference}) becomes 
$$
c \sum c_m^+(0) q^m \hat{\omega} 
$$
which is modular by Corollary \ref{cor:cm+}. 
\end{proof}

\bibliographystyle{alpha}
\bibliography{reference}

\newcommand{\etalchar}[1]{$^{#1}$}
\begin{thebibliography}{BGKK07}

\bibitem[AB11]{AB11}
Fabrizio Andreatta and Alessandra Bertapelle.
\newblock {Universal extension crystals of 1-motives and applications}.
\newblock {\em Journal of Pure and Applied Algebra}, 215(8):1919--1944, 2011.

\bibitem[AK18]{AK}
Zavosh Amir-Khosravi.
\newblock Serre's tensor construction and moduli of abelian schemes.
\newblock {\em Manuscripta Mathematica}, 156(3):409--456, 2018.

\bibitem[BF04]{BF04}
J~Bruinier and J~Funke.
\newblock On two geometric theta lifts.
\newblock {\em Duke Mathematical Journal.}, 125(1):45--90, 2004.

\bibitem[BGKK07]{BKK}
J.~I. Burgos~Gil, J.~Kramer, and U.~Kühn.
\newblock Cohomological arithmetic {C}how rings.
\newblock {\em Journal of the Institute of Mathematics of Jussieu}, 6(1):1–172, 2007.

\bibitem[BHK{\etalchar{+}}20]{BHKRY1}
Jan Bruinier, Benjamin Howard, Stephen~S Kudla, Michael Rapoport, and Tonghai Yang.
\newblock Modularity of generating series of divisors on unitary \text{S}himura varieties {I}.
\newblock {\em Astérisque, (421, Diviseurs arithmétiques sur les variétés orthogonales et unitaires de Shimura)}, pages 7--125, 2020.

\bibitem[BHY15]{BHY}
Jan~Hendrik Bruinier, Benjamin Howard, and Tonghai Yang.
\newblock Heights of {K}udla--{R}apoport divisors and derivatives of ${L}$-functions.
\newblock {\em Inventiones Mathematicae}, 201(1):1--95, 2015.

\bibitem[Bor98]{Bor98}
Richard~E. Borcherds.
\newblock {Automorphic forms with singularities on Grassmannians}.
\newblock {\em Inventiones mathematicae}, 132(3):491--562, 1998.

\bibitem[Bru02]{BrThesis}
Jan~H Bruinier.
\newblock {\em Borcherds products on $O(2, l)$ and Chern classes of Heegner divisors}.
\newblock Number 1780. Springer Science \& Business Media, 2002.

\bibitem[BWR15]{BR}
Jan Bruinier and Martin Westerholt-Raum.
\newblock Kudla’s modularity conjecture and formal {F}ourier-{J}acobi series.
\newblock {\em Forum Math. Pi}, 2015.

\bibitem[Del74]{deligne1974}
Pierre Deligne.
\newblock Th{\'e}orie de {H}odge: {III}.
\newblock {\em Publications Math{\'e}matiques de l'IH{\'E}S}, 44:5--77, 1974.

\bibitem[EGT23]{EGT}
Philip Engel, Francois Greer, and Salim Tayou.
\newblock Mixed mock modularity of special divisors.
\newblock {\em Preprint}, page pp 37, 2023.

\bibitem[ES18]{ES}
Stephan Ehlen and Siddarth Sankaran.
\newblock On two arithmetic theta lifts.
\newblock {\em Compositio Mathematica}, 154(10):2090--2149, 2018.

\bibitem[FC13]{FC}
Gerd Faltings and Ching-Li Chai.
\newblock {\em Degeneration of abelian varieties}, volume~22.
\newblock Springer Science \& Business Media, 2013.

\bibitem[Fre90]{freitag1990hilbert}
Eberhard Freitag.
\newblock Hilbert modular forms.
\newblock In {\em Hilbert Modular Forms}, pages 5--71. Springer, 1990.

\bibitem[HMP20]{HP}
Benjamin Howard and Keerthi Madapusi~Pera.
\newblock Arithmetic of borcherds products.
\newblock {\em Astérisque}, 421:187--297, 2020.

\bibitem[How12]{howard2012complex}
Benjamin Howard.
\newblock Complex multiplication cycles and {K}udla-{R}apoport divisors.
\newblock {\em Annals of Mathematics}, pages 1097--1171, 2012.

\bibitem[How15]{Ho1}
Benjamin Howard.
\newblock Complex multiplication cycles and {K}udla-{R}apoport divisors {II}.
\newblock {\em American Journal of Mathematics}, 137(3):639--698, 2015.

\bibitem[How19]{Ho2}
Benjamin Howard.
\newblock Linear invariance of intersections on unitary {R}apoport--{Z}ink spaces.
\newblock In {\em Forum Mathematicum}, volume~31, pages 1265--1281, 2019.

\bibitem[HZ76]{HZ}
F.~Hirzebruch and D.~Zagier.
\newblock Intersection numbers of curves on {H}ilbert modular surfaces and modular forms of {N}ebentypus.
\newblock {\em Inventiones Mathematicae.}, 36:57--113, 1976.

\bibitem[KM90]{KM}
Stephen~S. Kudla and John~J. Millson.
\newblock Intersection numbers of cycles on locally symmetric spaces and {F}ourier coefficients of holomorphic modular forms in several complex variables.
\newblock {\em Inst. Hautes \'{E}tudes Sci. Publ. Math.}, (71):121--172, 1990.

\bibitem[KR14]{KR2}
Stephen~S. Kudla and Michael Rapoport.
\newblock Special cycles on unitary {S}himura varieties {II}: {G}lobal theory.
\newblock {\em J. Reine Angew. Math.}, 697:91--157, 2014.

\bibitem[Kr{\"a}03]{Kr}
Nicole Kr{\"a}mer.
\newblock Local models for ramified unitary groups.
\newblock In {\em Abhandlungen aus dem Mathematischen Seminar der Universit{\"a}t Hamburg}, volume~73, pages 67--80. Springer, 2003.

\bibitem[KRY99]{KRYtiny}
Stephen~S. Kudla, Michael Rapoport, and Tonghai Yang.
\newblock On the derivative of an {E}isenstein series of weight one.
\newblock {\em Internat. Math. Res. Notices}, (7):347--385, 1999.

\bibitem[Kud97a]{KuDuke}
Stephen~S. Kudla.
\newblock Algebraic cycles on {S}himura varieties of orthogonal type.
\newblock {\em Duke Math. J.}, 86(1):39--78, 1997.

\bibitem[Kud97b]{Kudla97}
Stephen~S. Kudla.
\newblock Central derivatives of {E}isenstein series and height pairings.
\newblock {\em Annals of mathematics}, 146(3):545--646, 1997.

\bibitem[KY10]{KYEisenstein}
Stephen~S. Kudla and Tonghai Yang.
\newblock Eisenstein series for {SL}(2).
\newblock {\em Sci. China Math.}, 53(9):2275--2316, 2010.

\bibitem[Lan13]{Lan}
Kai-Wen Lan.
\newblock {\em Arithmetic Compactifications of PEL-Type Shimura Varieties}.
\newblock Princeton University Press, Princeton, 2013.

\bibitem[Li23]{li2021algebraicity}
Yingkun Li.
\newblock Algebraicity of higher {G}reen functions at a {CM} point.
\newblock {\em Inventiones Mathematicae}, 234:375--418, 2023.

\bibitem[Mes72]{Messing}
William Messing.
\newblock {\em The Crystals Associated to Barsotti-Tate Groups: with Applications to Abelian Schemes}, volume 264 of {\em Lecture Notes in Mathematics}.
\newblock Springer, 1972.

\bibitem[RTZ13]{RTZ}
Michael Rapoport, Ulrich Terstiege, and Wei Zhang.
\newblock On the arithmetic fundamental lemma in the minuscule case.
\newblock {\em Compositio Mathematica}, 149(10):1631--1666, 2013.

\bibitem[Yan05]{YaValue}
Tonghai Yang.
\newblock C{M} number fields and modular forms.
\newblock {\em Pure Appl. Math. Q.}, 1(2):305--340, 2005.

\bibitem[Zha21]{zhang2021AFL}
Wei Zhang.
\newblock Weil representation and arithmetic fundamental lemma.
\newblock {\em Annals of Mathematics}, 193(3):863--978, 2021.

\end{thebibliography}
\end{document}